\numberwithin{equation}{section}
\newtheorem {theorem}{Theorem}[section]
\newtheorem {lemma}[theorem]{{\bf Lemma}}
\newtheorem {proposition}[theorem]{{\bf Proposition}}
\theoremstyle{remark}
\newtheorem {remark}{{\bf Remark}}[section]
\theoremstyle{plain} \numberwithin {equation}{section}
\newcommand{\R}{{\mathbb R}}
\def\div{ \hbox{\rm div}\,  }
\def\nn{\nonumber}
\def\u{ \mathbf{u} }
\def\v{ \mathbf{v} }
\def\T{ \mathbb{T} }
\begin{document}
\title{ { Improved stability threshold for 2D Navier-Stokes    \\[1ex]Couette flow in an infinite channel }}

%\author{XXXXXXXXXXXXXXXX}
\author{Tao Liang}

\address[T. Liang]{ School of Mathematics,
	South China University of Technology,
	Guangzhou, 510640, China}
\email{taolmath@163.com}

\author[J. Wu]{Jiahong Wu}
\address[J. Wu]{Department of Mathematics, University of Notre
	Dame, Notre Dame, IN 46556, USA } \email{jwu29@nd.edu}

 \author[X. Zhai]{Xiaoping Zhai}
\address[X. Zhai]{School of Mathematics and Statistics, Guangdong University of Technology,
Guangzhou, 510520, China} \email{pingxiaozhai@163.com (Corresponding author)}

%\footnote{\today}

\subjclass[2020]{35Q35, 35B65, 76B03}

\keywords{Stability threshold; Navier-Stokes systems; Couette flow }

\begin{abstract}
 We study the nonlinear stability of the two-dimensional Navier-Stokes equations around the Couette shear flow in the channel domain $\mathbb{R}\times[-1,1]$ subject to Navier slip boundary conditions. We establish a quantitative stability threshold for perturbations of the initial vorticity $\omega_{in}$, showing that stability holds for perturbations of order $\nu^{1/2}$ measured in an anisotropic Sobolev space. This sharpens the recent work of Arbon and Bedrossian [Comm. Math. Phys., 406 (2025), Paper No. 129] who proved stability under the threshold $\nu^{1/2}(1+\ln(1/\nu))^{-1/2}$. Our result removes the logarithmic loss and  identifies the natural scaling $\nu^{1/2}$ as the critical size of perturbations for nonlinear stability in this setting.
\end{abstract}

\maketitle

\tableofcontents
\section{Introduction and the main result}
\subsection {Two dimensional Navier-Stokes  equations}

In this paper, we consider the two-dimensional Navier-Stokes equations in the infinite channel $(x,y) \in\mathbb{R} \times [-1, 1]$ with Navier slip boundary conditions, which is governed by the following model
\begin{eqnarray}\label{NS_eq}
\left\{\begin{aligned}
&\partial_t \v+ \v \cdot\nabla \v - \nu \Delta \v  + \nabla P  = 0   ,\\
& \div \v = \partial_x v_1 + \partial_y v_2 = 0,\\
& v_2(t, x, \pm 1) = 0, \quad \partial_y v_1(t, x, \pm 1) = 1,\\
& \v(0, x,y) = \v_{in}(x,y).
\end{aligned}\right.
\end{eqnarray}
Here, $ \mathbf{v}=(v_1,v_2) $ denotes the velocity field, $ P $ represents the pressure, and $ \nu $ signifies the viscosity coefficient of the fluid.  We investigate the stability threshold problem for \eqref{NS_eq}  with the background Couette flow $\mathbf{v}_s = (y, 0)^\top$. By setting $ \u = \v-\v_s$, the system can be reformulated as follows
\begin{eqnarray}\label{rewrite}
\left\{\begin{aligned}
&\partial_t \u+ y\partial_x \u + \binom{u_2  }{ 0} + \u \cdot\nabla \u - \nu \Delta \u  + \nabla P  =  0,\\
& \div \u =  0,\\
& u_2(t, x, \pm 1) = 0, \quad \partial_y u_1(t, x, \pm 1) = 0.
\end{aligned}\right.
\end{eqnarray}
By introducing the vorticity $ \omega = \nabla \times \u = \partial_y u_1 - \partial_x u_2$, we obtain the following system of equations for the vorticity
\begin{eqnarray}\label{rewrite1}
\left\{\begin{aligned}
&\partial_t \omega+ y\partial_x \omega   + \u \cdot\nabla \omega - \nu \Delta \omega    =  0,\\
& \omega(0, x, y) = \omega_{in}(x,y), \quad \omega(t, x, \pm 1) = 0.\\
& \u = \nabla^{\perp} \phi = (\partial_y \phi,-\partial_x \phi ), \quad \Delta \phi =\omega.
\end{aligned}\right.
\end{eqnarray}
\subsection{Backgrounds}
The stability of Couette flow has been a central theme in fluid mechanics since the pioneering works of Kelvin \cite{Kelvin}, Rayleigh \cite{Rayleigh}, Orr \cite{Orr1907}, and Sommerfeld \cite{Sommerfeld}. In his seminal paper \cite{Kelvin}, Kelvin derived an explicit solution to the linearized vorticity equation associated with \eqref{rewrite1}:
\begin{equation}
	\partial_t \omega + y \partial_x \omega - \nu \Delta \omega = 0,
	\quad \omega|_{t=0} = \omega_{in}.
\end{equation}
Let $\widehat{\omega}(t,k,\xi)$ denote the Fourier transform of $\omega(t,x,y)$. On the periodic channel $\mathbb{T}_x \times \mathbb{R}_y$, the Fourier frequency $k$ belongs to $\mathbb{Z}$, and Kelvin's explicit representation takes the form
\begin{equation}
	\widehat{\omega}(t,k,\xi)
	= \widehat{\omega}_{in}(k,\xi+kt)
	\exp\!\left(-\nu \int_0^t \big(|k|^2 + |\xi+k(t-s)|^2\big)\, ds\right).
\end{equation}
From this formula one derives two fundamental linear estimates:
\begin{align}
	&|\widehat{\omega}(t,k,\xi)|
	\leq C\, |\widehat{\omega}_{in}(k,\xi+kt)|
	\,e^{-c\nu^{1/3}|k|^{2/3}t}, \label{eq1.5a} \\[6pt]
	&|\widehat{\phi}(t,k,\xi)|
	\leq C\, \langle t\rangle^{-2}
	\frac{1+|k|^2+|\xi+kt|^2}{|k|^4}\,
	|\widehat{\omega}_{in}(k,\xi+kt)|\,
	e^{-c\nu^{1/3}|k|^{2/3}t}, \label{eq1.6}
\end{align}
where $\phi = \Delta^{-1}\omega$ denotes the stream function.

Estimate \eqref{eq1.5a} shows that the nonzero modes of $\omega$ decay on the time scale $t \gtrsim \nu^{-1/3}$, which is dramatically faster than the usual heat dissipation scale $t \gtrsim \nu^{-1}$. This accelerated decay mechanism is known as \emph{enhanced dissipation} \cite{albritton}. Meanwhile, estimate \eqref{eq1.6} demonstrates that $\phi$ decays at a polynomial rate, a phenomenon referred to as \emph{inviscid damping}, first observed by Orr in \cite{Orr1907}. Together, enhanced dissipation and inviscid damping form the core linear stability mechanisms underpinning the long-time dynamics of shear flows.

In the inviscid case ($\nu=0$), nonlinear inviscid damping was rigorously established for the Couette flow by Bedrossian and Masmoudi \cite{BM2015}, with further developments in \cite{IJ2018, rensiqi, wei2019, wei20192, zhuhao}. More recent works \cite{IJ2020, zhaoweiren2020} extended the theory to show that nonlinear inviscid damping also holds for general classes of monotone shear flows.

This paper addresses the stability problem for the Navier-Stokes equations with small viscosity $\nu > 0$. Mathematically, the transition threshold problem was formulated by Bedrossian, Germain, and Masmoudi \cite{BGM2017} as follows:
\vskip .1in
{\it
Given a norm $\|\cdot\|_X$, determine a value $\beta = \beta(X)$ such that
\begin{itemize}
\item$\|\mathbf{u}_{\text{in}}\|_{X} \leq \nu^\beta$ implies stability, enhanced dissipation, and inviscid damping;
\item $\|\mathbf{u}_{\text{in}}\|_{X} \gg \nu^\beta$ leads to instability,
\end{itemize}
where the exponent $\beta=\beta (X)>0$  is called the \textit{stability threshold}. And it is also known as the \textit{transition threshold} in almost all applied literature. Obviously, the \textit{transition threshold} problem is more stringent and complicated than the nonlinear stability problem.
}

\vskip .1in
On the domain $\T_x\times\R_y$, the following important results are known:
\begin{itemize}
	\item If $X$ is Gevrey class $2_-$, then \cite{BMV2016} showed $\beta\leq 0$, while \cite{DM2023} established $\beta\geq 0$.
	\item If $X$ is the Sobolev space $H^{\log}_xL^2_y$, then \cite{BVW2018,zhaoweiren2020cpde} obtained $\beta\leq \tfrac12$, and \cite{LiMasmoudiZhao2022critical} proved $\beta\geq \tfrac12$.
	\item If $X$ is the Sobolev space $H^\sigma$ with $\sigma\geq 2$, then \cite{zhaoweiren2019,wei2023} showed $\beta\leq \tfrac13$.
	\item If $X$ is Gevrey class $\tfrac1s$ with $s\in[0,\tfrac12]$, then \cite{LMZ2022G} proved $\beta\leq \tfrac{1-2s}{3(1-s)}$.
\end{itemize}

In addition, significant progress has been made on the stability threshold of the 2D Couette flow in a finite channel (see, e.g., \cite{BHIW2023,BHIW,CLWZ2020}), and several results are also available for the 3D Couette flow (see, e.g., \cite{BGM2017,BGM2020,BGM2015,Chenwei2020,wei2020}).

More recently, fully nonlinear stability results have been obtained for Couette flow in unbounded, non-periodic domains (see \cite{arbon2024,liuning,wangweike}).
In particular, Arbon--Bedrossian \cite{arbon2024} established a quantitative stability threshold for the 2D Navier--Stokes Couette flow on the plane, the half-plane with Navier boundary conditions, and the infinite channel with Navier boundary conditions.
They proved that perturbations of size
\[
O\!\left(\frac{\nu^{1/2}}{\sqrt{\ln(1/\nu)}}\right)
\]
remain globally stable and exhibit inviscid damping of the velocity field, enhanced dissipation at intermediate and high frequencies, and dispersion phenomena (Taylor dispersion on the plane and half-plane, heat-like decay in the channel).
These are the first fully nonlinear stability results for Couette flow in unbounded, non-periodic domains, extending prior work that focused primarily on periodic settings.
Their work also introduced new techniques to control low- and intermediate-frequency interactions in the nonlinear regime, providing a framework that may serve as a stepping stone toward the 3D case and more general shear flows.

Utilizing more detailed frequency space decomposition techniques, the goal of the present paper is to improve upon the stability threshold derived in Theorem 1.3 of Arbon--Bedrossian \cite{arbon2024}.
More precisely, we establish the following main theorem.

\subsection{Main result}
\begin{theorem}\label{thm1.2} (Nonlinear stability)
	Let $\varepsilon \in (0, \frac{1}{12})$ and consider System \eqref{rewrite1} under the boundary condition
	\[
	\omega_{in}|_{y = \pm 1} = 0.
	\]
	Then, for any $m \in (1, \infty)$, there exists a small positive constant $\varepsilon_0$ such that, if the initial data satisfies
	\[
	\sum_{j=0}^{1} \left\| (\nu^{\frac{1}{3}} \partial_y)^j \langle \partial_x \rangle^{\, m - \frac{j}{3}} \left\langle \frac{1}{\partial_x} \right\rangle^{\varepsilon} \omega_{in} \right\|_{L^2_{x,y}} \le \varepsilon_0 \, \nu^{\frac{1}{2}},
	\]
	then, for any sufficiently small positive constant $\delta$ (independent of $\nu$) and for all $\nu \in (0,1)$, System \eqref{rewrite1} admits a globally well-posed solution $\omega$ satisfying the uniform stability estimates
	\[
	\sum_{j=0}^{1} \left\| e^{\delta \lambda_k t} (\nu^{\frac{1}{3}} \partial_y)^j \langle \partial_x \rangle^{\, m - \frac{j}{3}} \left\langle \frac{1}{\partial_x} \right\rangle^{\varepsilon} \omega \right\|_{L^2_{x,y}}
	\le C \, \varepsilon_0 \, \nu^{\frac{1}{2}}, \quad \text{where} \quad
	\lambda_k =
	\begin{cases}
		\nu^{\frac{1}{3}} |k|^{\frac{2}{3}}, & |k| \ge \nu, \\[2mm]
		\nu, & |k| \le \nu.
	\end{cases}
	\]
	for any $t \in [0, +\infty)$. Moreover, the solution exhibits the following inviscid damping:
	\[
	\sum_{j=0}^{1} \left\| e^{\delta \lambda_k t} \langle \partial_x \rangle^{\, m - \frac{j}{3}} \left\langle \frac{1}{\partial_x} \right\rangle^{\varepsilon} (\nu^{\frac{1}{3}} \partial_y)^j \partial_x \u \right\|_{L_t^2 L_{x,y}^2}
	\le C \, \varepsilon_0 \, \nu^{\frac{1}{2}}.
	\]
\end{theorem}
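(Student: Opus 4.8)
The plan is to run a bootstrap (continuity) argument on a weighted energy functional in which the enhanced-dissipation rate $\lambda_k$ is built directly into the norm through the multiplier $e^{\delta\lambda_k t}$. First I would take the Fourier transform in $x$ only (the frequency $k\in\R$ is continuous because $x\in\R$), keeping $L^2$ in $y\in[-1,1]$ together with the Dirichlet condition $\omega|_{y=\pm1}=0$, so that the vorticity equation reads $\partial_t\widehat\omega + iky\,\widehat\omega + \widehat{\u\cdot\nabla\omega} - \nu(\partial_y^2-k^2)\widehat\omega=0$. The two quantities to control are the sup-in-time energy $\ea$ and the associated dissipation norm $\eb$, both carrying the anisotropic weights $(\nu^{1/3}\partial_y)^j\langle\partial_x\rangle^{m-j/3}\langle 1/\partial_x\rangle^{\varepsilon}$ from the statement. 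The bootstrap hypothesis is $\ea\le 2C\varepsilon_0\nu^{1/2}$, and the aim is to improve the constant to $C\varepsilon_0\nu^{1/2}$, which then yields global existence and the uniform bound by continuation.

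For the linear mechanism I would carry out a hypocoercive (augmented) energy estimate. Testing the equation against the weighted vorticity produces the physical dissipation $\nu\bigl(\|\partial_y\widehat\omega\|^2+k^2\|\widehat\omega\|^2\bigr)$ and, from the time weight, a production term $\delta\lambda_k\|\cdot\|^2$ that must be absorbed. The transport term $iky$ is skew-symmetric and by itself produces no decay; the gain of the enhanced rate comes from augmenting the energy with the $j=1$ component $\nu^{1/3}\partial_y\omega$ and introducing a small, suitably weighted cross term coupling $\nu^{1/3}\partial_y\omega$ to $ik\,\omega$, which converts the interaction between $iky$ and $\partial_y$ into genuine coercivity at rate $\nu^{1/3}|k|^{2/3}$. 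The frequency split in $\lambda_k$ — equal to $\nu^{1/3}|k|^{2/3}$ for $|k|\ge\nu$ and to $\nu$ for $|k|\le\nu$ — reflects exactly these two regimes: genuine enhancement at intermediate and high $k$, and bare heat decay once $|k|\le\nu$, where the hypocoercive gain degenerates to the viscous rate.

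The heart of the argument is the nonlinear estimate for $\u\cdot\nabla\omega=\nabla^\perp\phi\cdot\nabla\omega$. I would decompose $\u$ into its $x$-average $\bar\u$ and its nonzero-mode part, and estimate the velocity through the elliptic relation $\Delta\phi=\omega$, using the inviscid-damping bound \eqref{eq1.6} to trade the $\langle t\rangle^{-2}$ decay of $\phi$ against the loss of $x$-derivatives. The low-frequency weight $\langle 1/\partial_x\rangle^{\varepsilon}$ with $\varepsilon<\tfrac1{12}$ supplies precisely the small amount of extra room needed to absorb the singular factors $|k|^{-1}$ and $|k|^{-2}$ coming from $\phi=\Delta^{-1}\omega$ near $k=0$, which is the genuinely new difficulty in the infinite-channel setting where $k$ may be arbitrarily small. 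A Littlewood--Paley/paraproduct decomposition in the $x$-frequency, localized around the threshold $|k|\sim\nu$, then lets me separate the enhanced regime from the non-enhanced one and estimate each dyadic band against the rate appropriate to $\lambda_k$, ultimately bounding the nonlinear contribution by $\ea\,\eb$.

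The main obstacle is exactly this low-to-intermediate frequency interaction: a crude treatment sums the contributions of the bands between $|k|\sim\nu$ and $|k|\sim1$ and produces the factor $\ln(1/\nu)$ responsible for the loss in Arbon--Bedrossian \cite{arbon2024}. Removing it requires a sharper band-by-band accounting in which the time-integrated enhanced-dissipation gain $\int_0^\infty e^{-c\lambda_k t}\,dt\sim\lambda_k^{-1}$ is balanced against the derivative loss and the low-frequency weight so that the resulting geometric series over dyadic bands converges without a logarithm. Once the nonlinear terms are controlled by $\ea\,\eb$, the smallness of $\varepsilon_0$ together with the bootstrap hypothesis closes the continuity argument and gives the uniform stability estimate; the stated inviscid damping for $\u$ then follows by inserting the resulting vorticity bound into \eqref{eq1.6} and integrating in time.
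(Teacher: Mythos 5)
Your overall skeleton---Fourier transform in $x$ with continuous frequency, a hypocoercive energy augmented by the cross term coupling $ik\omega_k$ to $\partial_y\omega_k$, the observation that the weight $\langle 1/\partial_x\rangle^{\varepsilon}$ provides the integrability near $k=0$ needed to handle $\phi=\Delta^{-1}\omega$, and a bootstrap closing at size $\nu^{1/2}$---matches the paper. But there is a genuine gap in how you propose to access inviscid damping. You invoke the explicit decay estimate \eqref{eq1.6} for the stream function, ``trading the $\langle t\rangle^{-2}$ decay of $\phi$ against the loss of $x$-derivatives,'' and you also derive the final velocity bound by ``inserting the vorticity bound into \eqref{eq1.6}.'' That formula comes from Kelvin's explicit mode-by-mode solution, which exists only on $\mathbb{T}_x\times\mathbb{R}_y$ (or the plane); in the channel $\mathbb{R}\times[-1,1]$ with $\omega_k(t,\pm1)=0$ and $\phi_k(t,\pm1)=0$ there is no such explicit representation, and no pointwise-in-time $\langle t\rangle^{-2}$ bound is available to the argument. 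The paper's substitute is the singular integral operator $\mathfrak{J}_k$ built from the Green's function of $\Delta_k$ on $[-1,1]$, which is added to the energy functional $E_k[\omega_k]$ precisely so that its time derivative generates the \emph{time-integrated} damping terms $\mathscr{D}_4=|k|^2\|\nabla_k\phi_k\|_{L^2}^2$ and $\mathscr{D}_5=\alpha|k|^2\|\partial_y\nabla_k\phi_k\|_{L^2}^2$ inside the dissipation functional. Essentially every nonlinear estimate in the paper (Lemmas \ref{le_1}--\ref{le_3}) closes only because a factor $\mathscr{D}_4^{1/2}$ or $\mathscr{D}_5^{1/2}$ is available to absorb one power of the velocity; without the $\mathfrak{J}_k$ augmentation your scheme has no quantity controlling $\int_0^t |k|^2\|\nabla_k\phi_k\|^2\,d\tau$, and the transport nonlinearity cannot be closed at the $\nu^{1/2}$ threshold. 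The stated $L^2_t$ inviscid damping of $\partial_x\u$ is then read off directly from $\int_0^\infty(\mathscr{D}_4+\mathscr{D}_5)\,dt$, not from any linear decay formula.

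A secondary issue is your proposed mechanism for removing the logarithm. You attribute it to a band-by-band balance of the time-integrated gain $\int_0^\infty e^{-c\lambda_k t}\,dt\sim\lambda_k^{-1}$ against derivative losses, summed as a geometric series over dyadic bands between $|k|\sim\nu$ and $|k|\sim1$. The paper does nothing of this sort: all nonlinear estimates are pointwise in time, and the logarithm is eliminated purely through the low-frequency weight, via inequalities such as $|\ell|^{-1/2}\lesssim\langle \ell^{-1}\rangle^{\varepsilon}\langle (k-\ell)^{-1}\rangle^{\varepsilon}\langle k^{-1}\rangle^{1/2-2\varepsilon}$ that distribute the singular factor among the three frequencies $k$, $\ell$, $k-\ell$, combined with $\||k|^{-1/2}\langle k\rangle^{-m}\langle k^{-1}\rangle^{-\varepsilon}\|_{L^2_k}\lesssim1$. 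Your version of this step is asserted rather than carried out, and since it is exactly the point where Arbon--Bedrossian lose the $\sqrt{\ln(1/\nu)}$, it cannot be left at the level of a plausibility claim.
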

\begin{remark}
In the work of Arbon--Bedrossian \cite{arbon2024}, the nonlinear stability threshold for the
two-dimensional Navier--Stokes Couette flow in an infinite channel was obtained at the scale
$$
\nu^{1/2}(1+\ln(1/\nu))^{-1/2},
$$
which may not be optimal. The logarithmic loss originates from the
difficulty of controlling low-frequency interactions in the continuous Fourier spectrum.
In particular, near the zero mode ($k \approx 0$), nonlinear terms may accumulate
growth that is hard to suppress by standard energy methods. To address this,
Arbon--Bedrossian employed a frequency partitioning strategy together with commutator
estimates, but the resulting control may not be sharp, leading to the extraneous logarithmic factor.

In the present paper, we refine this analysis by introducing a new weighted energy
framework that is specifically adapted to the infinite channel setting (see  $E$ and $D$ in
Section~3).
More precisely, we construct anisotropic Sobolev norms with frequency weights
that distinguish between the high-, intermediate-, and low-frequency regimes.
The associated energy functional captures both enhanced dissipation at high
frequencies (with time scale $t \sim \nu^{-1/3}$) and inviscid damping at low and
intermediate frequencies. By carefully tracking the evolution of these weighted
energies, we obtain sharper bounds on the stream function and vorticity
interactions and thereby eliminate the logarithmic loss.

As a consequence, we establish the nonlinear stability threshold at the
optimal scale $\nu^{1/2}$,
which matches the prediction from hydrodynamic stability theory.
In other words, perturbations of size $O(\nu^{1/2})$ remain globally stable and
exhibit enhanced dissipation and inviscid damping, while larger perturbations
may lead to instability.
\end{remark}

\vskip .1in
\subsection*{Notations}
Throughout this paper, $C > 0$ denotes a generic constant that is independent of the relevant quantities. For brevity, we write $f \lesssim g$ to mean that $f \le C g$ for some constant $C > 0$.
For two operators $A$ and $B$, we denote their commutator by $[A, B] = AB - BA$.
Moreover, we use $\langle f, g \rangle$ to represent the $L^2_y([-1, 1])$ inner product of $f$ and $g$.

\section{Linear Stability}\label{sec2}

\subsection{Linear Estimates}
Before proceeding with the proof of the theorem, we introduce some notational definitions. For $(x, y) \in \mathbb{R} \times [-1, 1]$, we define the Fourier transform of $ f $ with respect to the $ x $-direction as $ f_k $, given by:
$$ f_k(t,y) = \frac{1}{2\pi} \int_{\R} f(t,x,y)e^{-ikx} {d}x.$$
To investigate the linearized stability of System \eqref{rewrite1}, we first apply the Fourier transform with respect to the $ x $-direction, yielding the following equation for the vorticity $ \omega $ in terms of the wave number $ k $:
\begin{eqnarray}\label{linear_eq}
\left\{\begin{aligned}
&\partial_t \omega_k+ ik y \omega_k   - \nu (-  k^2 + \partial_y^2) \omega_k    =  0,\\
&    \Delta_{k} \phi_k =(-k^2 + \partial_y^2)\phi_k=\omega_k,\\
& \phi_k (t, \pm 1) = \omega_k(t, \pm 1) = 0.
\end{aligned}\right.
\end{eqnarray}
Inspired by the approach in \cite{BHIW2023, BHIW20251} for studying the stability threshold problem of System \eqref{rewrite1} in a finite channel $ \T \times [-1,1]$, where a singular integral operator $ \mathfrak{J}_k $ (to be determined) is constructed to capture the inviscid damping effect. However, we cannot directly apply this method to the infinite channel domain $ \R \times [-1,1]$. This is because, in the high-frequency regime, the influence of the nonlinear terms is relatively minor, as higher-frequency perturbations typically experience stronger dissipative effects. In contrast, in the low-frequency regime, the nonlinear terms exert a stronger influence, potentially leading to excessive growth of these terms. When $ x \in \mathbb{T} $, the frequencies associated with $ \Delta_x $ are discrete; for nonzero modes, there exists a gap between frequencies $ k $ and $ k+1 $, which facilitates the capture of enhanced dissipation. For the zero mode, however, no such enhanced dissipation occurs. When considering $ x \in \mathbb{R} $, the frequencies become continuous, and the proximity to the zero mode may induce excessive growth in the nonlinear terms, rendering energy estimates particularly challenging. Inspired by \cite{arbon2024}, we address this by appropriately partitioning the frequency space. To this end,  we define the following coercive energy functional:
\begin{align*}%\label{Ekw}
E_{ k}[\omega_k] \stackrel{\mathrm{def}}{=}\begin{cases}\| \omega_k\|^2_{L^2} + c_{\alpha} \alpha   \|\partial_y \omega_k \|^2_{L^2} -c_{\beta} \beta
\operatorname{Re} \langle i k \omega_k, \partial_y \omega_k \rangle + c_{\tau} \operatorname{Re} \langle  \omega_k, \mathfrak{J}_k [\omega_k] \rangle & |k| \geq \nu ;\\ \qquad \quad  + c_{\tau} c_{\alpha} \alpha \operatorname{Re} \langle \mathfrak{J}_k [\partial_{y} \omega_k], \partial_{y} \omega_{k} \rangle,  \\ \| \omega_k\|^2_{L^2} + c_{\alpha} \alpha   \|\partial_y \omega_k \|^2_{L^2}  + c_{\tau} \operatorname{Re} \langle  \omega_k, \mathfrak{J}_k [\omega_k] \rangle  + c_{\tau} c_{\alpha} \alpha \operatorname{Re} \langle \mathfrak{J}_k [\partial_{y} \omega_k], \partial_{y} \omega_{k} \rangle, & |k| \leq \nu.\end{cases}
\end{align*}
Here, $ \alpha $, $ \beta $, and $ \lambda_k $ are defined as follows:
\begin{align*}
\alpha \stackrel{\mathrm{def}}{=}\begin{cases} \nu^{\frac{2}{3}} |k|^{-\frac{2}{3}} , & |k| \geq \nu ; \\ 1, & |k| \leq \nu;\end{cases}, \quad \beta\stackrel{\mathrm{def}}{=} \begin{cases} \nu^{\frac{1}{3}} |k|^{-\frac{4}{3}} , & |k| \geq \nu ; \\ 0, & |k| \leq \nu;\end{cases}
\quad\lambda_k \stackrel{\mathrm{def}}{=} \begin{cases} \nu^{\frac{1}{3}} |k|^{\frac{2}{3}} , & |k| \geq \nu ; \\ \nu, & |k| \leq \nu.\end{cases}
\end{align*}
It is straightforward to verify that the energy functional satisfies $$ E_{ k}[\omega_k] \approx \| \omega_k\|^2_{L^2} +    \alpha_k\|\partial_y \omega_k \|^2_{L^2} .$$
The singular integral operator $ \mathfrak{J}_k$ is defined as follows
\begin{align*}%\label{Jk}
\mathfrak{J}_k[f](y) \stackrel{\mathrm{def}}{=}|k| \mathrm{p} . \mathrm{v} . \frac{k}{|k|} \int_{-1}^1 \frac{1}{2 i(y-y^{\prime})} G_k(y, y^{\prime}) f(y^{\prime}) {d} y^{\prime},
\end{align*}
where $G_k(y, y')$ represents the Green's function satisfying $ \Delta_{k} G_k (y, y') = \delta(y-y')$ with homogeneous Dirichlet boundary conditions. The explicit expression for $ G_k(y, y')$ is given by
\begin{align*}%\label{Gk}
G_k\left(y, y^{\prime}\right)\stackrel{\mathrm{def}}{=}-\frac{1}{k \sinh (2 k)} \begin{cases}\sinh \left(k\left(1-y^{\prime}\right)\right) \sinh (k(1+y)), & y \leq y^{\prime} ; \\ \sinh (k(1-y)) \sinh \left(k\left(1+y^{\prime}\right)\right), & y \geq y^{\prime}.\end{cases}
\end{align*}

We briefly review the boundness of the singular integral operator $ \mathfrak{J}_k$ with $k \neq 0$  introduced in \cite{BHIW2023}, the estimate for the commutator $ [\partial_y, \mathfrak{J}_k] $, and the conjugate symmetry of $ \mathfrak{J}_k $, as encapsulated in the three lemmas presented below. The detailed proofs can be found in \cite{arbon2024,BHIW2023}, and we omit the proof details here.

\begin{lemma}
\label{le2.1}
The singular integral operator $ \mathfrak{J}_k $ is a bounded linear operator from $ L^2 $ to $ L^2 $, and furthermore,
\begin{align*}
	\left\|\mathfrak{J}_k\right\|_{L^2 \rightarrow L^2} \lesssim 1.
\end{align*}
\end{lemma}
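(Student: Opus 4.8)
The plan is to split the kernel of $\mathfrak{J}_k$ into its singular diagonal part and a regular remainder, handling the former by classical Calderón--Zygmund theory and the latter by a Schur test. Writing
\[
\mathfrak{J}_k[f](y) = \frac{k}{2i}\,\mathrm{p.v.}\int_{-1}^1 \frac{G_k(y,y')}{y-y'}\,f(y')\,dy',
\]
I would decompose $G_k(y,y') = G_k(y,y) + \big(G_k(y,y') - G_k(y,y)\big)$, which gives
\[
\mathfrak{J}_k[f](y) = \frac{k\,G_k(y,y)}{2i}\,\mathrm{p.v.}\!\int_{-1}^1 \frac{f(y')}{y-y'}\,dy' \;+\; \frac{k}{2i}\int_{-1}^1 \frac{G_k(y,y')-G_k(y,y)}{y-y'}\,f(y')\,dy'.
\]
The first term is multiplication by the function $\tfrac{k\,G_k(y,y)}{2i}$ composed with the truncated Hilbert transform $Hf(y) = \mathrm{p.v.}\int_{-1}^1 (y-y')^{-1} f(y')\,dy'$ on $[-1,1]$. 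Since $H$ is bounded on $L^2([-1,1])$ by standard singular-integral theory, this term is controlled once the multiplier is shown to be bounded uniformly in $k$.

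For the multiplier, the product-to-sum identity $\sinh A\,\sinh B = \tfrac12(\cosh(A+B)-\cosh(A-B))$ yields
\[
k\,G_k(y,y) = -\frac{\cosh(2k)-\cosh(2ky)}{2\sinh(2k)}.
\]
For $|k|\ge 1$ one has $0 \le \cosh(2k)-\cosh(2ky)\le \cosh(2k)$ and hence $|k\,G_k(y,y)|\le \tfrac12\coth(2|k|)\lesssim 1$, while for $|k|\le 1$ the numerator vanishes like $2k^2(1-y^2)$ so the quotient is $O(k)$; combining the two regimes gives $\sup_{|y|\le 1}|k\,G_k(y,y)|\lesssim 1$. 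For the remainder kernel $R_k(y,y')=\frac{G_k(y,y')-G_k(y,y)}{y-y'}$, I would bound $\partial_{y'}G_k$ separately on $\{y\ge y'\}$ and $\{y\le y'\}$: the same hyperbolic identities show that each is a normalized sum of $\sinh$-terms whose arguments have absolute value at most $2|k|$, matching the denominator $\sinh(2k)$, so that $|\partial_{y'}G_k(y,y')|\le 1$ uniformly in $k,y,y'$. Consequently $y'\mapsto G_k(y,y')$ is Lipschitz with constant $\le 1$ (it is continuous across the corner $y'=y$), giving $|R_k(y,y')|\le 1$. A Schur test on the bounded domain, $\int_{-1}^1|R_k(y,y')|\,dy'\le 2$ and symmetrically in $y$, then bounds the remainder operator by a constant independent of $k$.

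The main obstacle is the uniformity in $k$ over the whole range $k\in\R\setminus\{0\}$: the Green's function carries the prefactor $(k\sinh 2k)^{-1}$, which is singular as $k\to 0$ and exponentially large through $\sinh 2k$ as $|k|\to\infty$, so the bounds on $k\,G_k(y,y)$ and on $\partial_{y'}G_k$ must be shown to remain $O(1)$ in both regimes. The product-to-sum reductions make this tractable, since every numerator reduces to a sum or difference of $\sinh/\cosh$ with argument at most $2|k|$; the delicate point is the cancellation near $k=0$, where the numerators must vanish fast enough to absorb the $1/k$ singularity. Since Lemma~\ref{le2.1} and its proof are already recorded in \cite{BHIW2023,arbon2024}, I would mainly verify that these uniform hyperbolic estimates are insensitive to the infinite-channel geometry, which they are, as $G_k$ coincides with the finite-channel Green's function.
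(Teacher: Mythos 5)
The paper does not actually prove this lemma; it cites \cite{arbon2024,BHIW2023} and omits the argument, so your proposal can only be judged on its own merits. Your treatment of the diagonal term is correct: the identity $k\,G_k(y,y)=-\tfrac{\cosh(2k)-\cosh(2ky)}{2\sinh(2k)}$ is right, the multiplier is uniformly bounded (indeed $|k\,G_k(y,y)|\le\tfrac12|\tanh k|\le\tfrac12$ for all $k$), and multiplication by a bounded function composed with the truncated Hilbert transform is bounded on $L^2([-1,1])$.

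The gap is in the remainder. Your Schur test is applied to $R_k(y,y')=\tfrac{G_k(y,y')-G_k(y,y)}{y-y'}$, but the remainder \emph{operator} carries the prefactor $\tfrac{k}{2i}$, so the bound $|R_k|\le 1$ only yields $\|\text{remainder}\|_{L^2\to L^2}\lesssim |k|$, which is useless for large $|k|$; the factor of $k$ has been dropped. Moreover this cannot be repaired by sharpening the pointwise bounds: setting $H_k=kG_k$, one has $H_k(y,y')=-\tfrac{\cosh(k(2-|y-y'|))-\cosh(k(y+y'))}{2\sinh(2k)}$, so for $|k|$ large, $H_k(y,y')\to 0$ exponentially once $|y-y'|\gg 1/|k|$ (away from the boundary), while $H_k(y,y)\approx-\tfrac12$ there. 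Hence the remainder kernel $\tfrac{H_k(y,y')-H_k(y,y)}{2i(y-y')}$ is comparable to $\tfrac{1}{4i(y-y')}$ on the range $1/|k|\lesssim|y-y'|\lesssim 1$: it retains a genuine principal-value singularity, is only bounded by $\min(|k|,\,C|y-y'|^{-1})$, and its Schur integral is of size $\log|k|$. Subtracting the diagonal therefore does not produce an absolutely integrable kernel uniformly in $k$. The standard route (as in \cite{BHIW2023}) is instead to compare $\tfrac{H_k(y,y')}{2i(y-y')}$ with the model convolution kernel $\mathrm{p.v.}\,\tfrac{e^{-|k|\,|y-y'|}}{y-y'}$ (plus a boundary-reflection term in $y+y'$), whose uniform $L^2$ bound follows from Plancherel because the Fourier transform of $\mathrm{p.v.}\,\tfrac{e^{-a|z|}}{z}$ is $-2i\arctan(\xi/a)$, uniformly bounded in $a>0$; only the \emph{difference} from this model kernel is Schur-testable. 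As written, your argument establishes at best $\|\mathfrak{J}_k\|_{L^2\to L^2}\lesssim 1+\log(2+|k|)$, not the claimed uniform bound.
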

\begin{lemma}\label{le2.2}
	For the commutator $[\partial_y, \mathfrak{J}_k]$, there holds
	\begin{align*}
		\| [\partial_y, \mathfrak{J}_k] \|_{L^2 \rightarrow L^2} \lesssim |k|.
	\end{align*}
\end{lemma}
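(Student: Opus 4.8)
The plan is to reduce the commutator to an explicit Cauchy-type singular integral operator whose kernel is one degree less singular than that of $\mathfrak{J}_k$, and then to bound it by the finite Hilbert transform. Writing out the definition, $\mathfrak{J}_k[f](y)=\frac{k}{2i}\,\mathrm{p.v.}\int_{-1}^1\frac{G_k(y,y')}{y-y'}f(y')\,dy'$, the first step is to identify the kernel of $[\partial_y,\mathfrak{J}_k]=\partial_y\mathfrak{J}_k-\mathfrak{J}_k\partial_y$. I would regularize the principal value by excising $\{|y-y'|<\epsilon\}$, differentiate in $y$ (picking up endpoint contributions from the moving cut at $y'=y\pm\epsilon$), and integrate by parts in the $y'$ variable in the term $\mathfrak{J}_k\partial_y f$ (which produces the same diagonal endpoint contributions, together with boundary terms at $y'=\pm1$). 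The boundary terms at $y'=\pm1$ vanish because $G_k(y,\pm1)=0$ by the Dirichlet condition, while the diagonal $\epsilon$-terms occur with identical signs in $\partial_y\mathfrak{J}_k f$ and in $\mathfrak{J}_k\partial_y f$ and hence cancel in the difference. What survives is exactly $[\partial_y,\mathfrak{J}_k]f(y)=\frac{k}{2i}\,\mathrm{p.v.}\int_{-1}^1(\partial_y+\partial_{y'})\!\big(\tfrac{G_k(y,y')}{y-y'}\big)f(y')\,dy'$.

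The crucial algebraic point is that $(\partial_y+\partial_{y'})\frac{1}{y-y'}=0$, so the most singular part is annihilated and only $\frac{1}{y-y'}(\partial_y+\partial_{y'})G_k$ remains. Differentiating the two branches of $G_k$ and using the hyperbolic addition formula $\cosh A\,\sinh B-\sinh A\,\cosh B=\sinh(B-A)$ gives the same expression on both sides of the diagonal, namely $(\partial_y+\partial_{y'})G_k(y,y')=\frac{\sinh(k(y+y'))}{\sinh(2k)}=:H_k(y,y')$, which is continuous across $y=y'$ (the jump of $\partial_yG_k$ is exactly cancelled by that of $\partial_{y'}G_k$) and satisfies $|H_k|\le1$ since $|y+y'|\le2$. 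Thus $[\partial_y,\mathfrak{J}_k]$ is the principal value operator with kernel $\frac{k}{2i}\frac{H_k(y,y')}{y-y'}$.

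For the $L^2$ bound, a naive estimate on the sup-norm of the kernel would only yield $|k|^2$, because the regular part obtained after subtracting the diagonal value $H_k(y,y)$ carries an extra factor of $|k|$ from differentiating $H_k$ in $y'$; the essential point is that this $k$-dependence must be tracked as a genuine cancellation rather than bounded in absolute value. I would expand $\sinh(k(y+y'))=\sinh(ky)\cosh(ky')+\cosh(ky)\sinh(ky')$ and thereby write $[\partial_y,\mathfrak{J}_k]f=\frac{k}{2i\sinh(2k)}\big(\sinh(ky)\,H[\cosh(k\cdot)f]+\cosh(ky)\,H[\sinh(k\cdot)f]\big)$, where $H$ denotes the finite Hilbert transform on $[-1,1]$. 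Using the classical bound $\|H\|_{L^2\to L^2}\lesssim1$ together with $\|\sinh(k\cdot)\|_{L^\infty[-1,1]}=\sinh|k|$, $\|\cosh(k\cdot)\|_{L^\infty[-1,1]}=\cosh|k|$, and the identity $|\sinh(2k)|=2\sinh|k|\cosh|k|$, the two factors $\sinh|k|\cosh|k|$ exactly cancel the denominator, leaving $\|[\partial_y,\mathfrak{J}_k]f\|_{L^2}\lesssim|k|\,\|f\|_{L^2}$, which is the claim. The main obstacle is twofold: the careful bookkeeping of principal value and boundary contributions in the first step, and, more importantly, recognizing in the last step that one must exploit the product structure $\sinh(ky)\cosh(ky')$ against $\sinh(2k)=2\sinh|k|\cosh|k|$, since the crude kernel bound overshoots by a full factor of $|k|$.
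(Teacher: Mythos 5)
Your argument is correct: the reduction of the commutator kernel to $\frac{k}{2i}\,\frac{(\partial_y+\partial_{y'})G_k(y,y')}{y-y'}$ (after the cancellation $(\partial_y+\partial_{y'})\frac{1}{y-y'}=0$ and the vanishing of the boundary terms from $G_k(y,\pm1)=0$), the identity $(\partial_y+\partial_{y'})G_k=\frac{\sinh(k(y+y'))}{\sinh(2k)}$ on both sides of the diagonal, and the final splitting $\sinh(k(y+y'))=\sinh(ky)\cosh(ky')+\cosh(ky)\sinh(ky')$ against $\sinh(2k)=2\sinh(k)\cosh(k)$ together with the $L^2$-boundedness of the finite Hilbert transform all check out and yield exactly the bound $\lesssim|k|$. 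The paper itself omits the proof and defers to \cite{arbon2024,BHIW2023}, where the argument is essentially the same computation, so your proposal matches the intended route.
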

\begin{lemma}\label{le2.3}
	For all $f, g \in L^2$ there holds
	\begin{align*}
		\overline{\mathfrak{J}_k[f]}=-\mathfrak{J}_k[\bar{f}],
	\end{align*}
	and
	\begin{align*}
	\int_{-1}^1 \bar{f} \mathfrak{J}_k[g] {d} y=-\int_{-1}^1 \mathfrak{J}_k[\bar{f}] g {d} y = \int_{-1}^1 \overline{\mathfrak{J}_k[f]} g {d} y .
	\end{align*}
	In particular, we have $\mathfrak{J}_k=\mathfrak{J}_k^*$.
\end{lemma}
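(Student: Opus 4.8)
The plan is to reduce all three assertions to two elementary structural features of the kernel. Writing
$$
K(y,y') \stackrel{\mathrm{def}}{=} \frac{1}{2i(y-y')}\, G_k(y,y'),
$$
so that $\mathfrak{J}_k[f](y) = k\,\mathrm{p.v.}\int_{-1}^1 K(y,y')\, f(y')\,dy'$ (using $|k|\cdot\tfrac{k}{|k|}=k$), I first record that from its explicit formula $G_k$ is real-valued and symmetric, $G_k(y,y') = G_k(y',y)$: indeed, setting $a=\min(y,y')$ and $b=\max(y,y')$ one has $G_k(y,y') = -\bigl(k\sinh(2k)\bigr)^{-1}\sinh(k(1-b))\sinh(k(1+a))$, which is manifestly invariant under $y\leftrightarrow y'$. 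Consequently the full kernel is antisymmetric,
$$
K(y',y) = \frac{1}{2i(y'-y)}G_k(y',y) = -\frac{1}{2i(y-y')}G_k(y,y') = -K(y,y'),
$$
and also satisfies $\overline{K(y,y')} = -K(y,y')$, since $G_k$ is real and only the factor $1/(2i)$ conjugates.

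For the first identity I would simply conjugate under the integral sign. Because $k$, $|k|$, and $G_k$ are real, taking complex conjugates sends $K(y,y')\mapsto \overline{K(y,y')} = -K(y,y')$ and $f(y')\mapsto \bar f(y')$, so
$$
\overline{\mathfrak{J}_k[f]}(y) = k\,\mathrm{p.v.}\int_{-1}^1 \overline{K(y,y')}\,\bar f(y')\,dy' = -k\,\mathrm{p.v.}\int_{-1}^1 K(y,y')\,\bar f(y')\,dy' = -\mathfrak{J}_k[\bar f](y),
$$
which is exactly $\overline{\mathfrak{J}_k[f]} = -\mathfrak{J}_k[\bar f]$.

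For the pairing identity I would expand $\int_{-1}^1 \bar f(y)\,\mathfrak{J}_k[g](y)\,dy$ as the iterated integral $k\int\!\!\int \bar f(y)K(y,y')g(y')\,dy'\,dy$, interchange the order of integration, relabel $y\leftrightarrow y'$, and invoke the antisymmetry $K(y',y)=-K(y,y')$ to reach $-k\int\!\!\int g(y)\bar f(y')K(y,y')\,dy'\,dy = -\int_{-1}^1 \mathfrak{J}_k[\bar f](y)\,g(y)\,dy$. The second equality in the statement is then immediate from the first identity, since $-\mathfrak{J}_k[\bar f] = \overline{\mathfrak{J}_k[f]}$. The one point demanding care — and the main, though mild, obstacle — is justifying the interchange of integrals for a principal-value kernel whose singularity sits precisely on the diagonal $y=y'$. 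I would handle this by working with the truncated kernels $K_\varepsilon = K\,\mathbf{1}_{|y-y'|>\varepsilon}$, for which Fubini applies trivially since $K_\varepsilon$ is bounded; the symmetric excision $\{|y-y'|>\varepsilon\}$ is itself invariant under $y\leftrightarrow y'$, so the antisymmetry argument is preserved for each $\varepsilon$, and passing to the limit $\varepsilon\to0^+$ — legitimate because $\mathfrak{J}_k$ is bounded on $L^2$ by Lemma \ref{le2.1} and the principal value exists — yields the identity first on a dense subclass of $f,g$ and then for all $f,g\in L^2$ by density.

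Finally, self-adjointness follows at once: with the $L^2_y([-1,1])$ inner product $\langle f,g\rangle = \int_{-1}^1 \bar f\,g\,dy$, the chain in the pairing identity reads $\langle f, \mathfrak{J}_k g\rangle = \int_{-1}^1 \overline{\mathfrak{J}_k[f]}\,g\,dy = \langle \mathfrak{J}_k f, g\rangle$, whence $\mathfrak{J}_k^* = \mathfrak{J}_k$.
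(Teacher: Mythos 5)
Your proof is correct, and it is the standard argument: the paper itself omits the proof of Lemma \ref{le2.3}, deferring to \cite{arbon2024,BHIW2023}, where the conjugate symmetry is established exactly along the lines you follow --- real-valuedness and symmetry of $G_k$ give antisymmetry and anti-self-conjugacy of the kernel $G_k(y,y')/(2i(y-y'))$, from which all three assertions follow by conjugating under the integral and swapping $y\leftrightarrow y'$. Your treatment of the only technical point, the interchange of integrals for the principal-value kernel (symmetric $\varepsilon$-excision invariant under $y\leftrightarrow y'$, uniform $L^2$ bounds from Lemma \ref{le2.1}, and density), is adequate and makes the formal kernel manipulation rigorous.
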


Additionally, based on the energy functional defined above, we define the corresponding dissipative energy functional
\begin{align*}
&\operatorname{Dis}_{1} \stackrel{\mathrm{def}}{=} \nu \| \nabla_k \omega_k\|^2_{L^2}, \quad \operatorname{Dis}_{2} \stackrel{\mathrm{def}}{=} \nu \alpha \| \partial_{y} \nabla_k \omega_k\|^2_{L^2},\quad\operatorname{Dis}_{3}\stackrel{\mathrm{def}}{=} \lambda_k \|  \omega_k\|^2_{L^2},\nn\\
& \operatorname{Dis}_{4} \stackrel{\mathrm{def}}{=} |k|^2 \| \nabla_k \phi_k\|^2_{L^2},
\quad\operatorname{Dis}_{5} \stackrel{\mathrm{def}}{=} \alpha |k|^2 \| \partial_{y}\nabla_k \phi_k\|^2_{L^2} \quad (\text{where} \nabla_k = (ik, \partial_{y})^\top ),
\end{align*}
associated with $ D_k[\omega_k] = \sum_{i=1}^{5} \operatorname{Dis}_i$.

As the stability of the linearized system \eqref{linear_eq} has been thoroughly investigated in \cite{arbon2024}, we therefore state only the final result and omit the detailed proof.
\begin{proposition}(see \cite{arbon2024})\label{linear_es}
	There exists a constant $c_0 = c_0\left(c_\tau, c_\alpha, c_\beta\right)$, which can be chosen independently of $v$ such that, for any $H^1$ solution $\omega_k$ to System \eqref{linear_eq}, the following holds for any $k \neq 0$:
	$$
	\frac{d}{d t} E_k\left[\omega_k\right]+c_0 D_k\left[\omega_k\right]+c_0 \lambda_k E_k\left[\omega_k\right]\le0.
	$$
\end{proposition}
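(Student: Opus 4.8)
The plan is to prove the differential inequality by a direct weighted-energy computation. I would differentiate $E_k[\omega_k]$ in $t$, substitute the evolution equation \eqref{linear_eq} together with its $\partial_y$-derivative
\[
\partial_t(\partial_y\omega_k) + iky\,\partial_y\omega_k + ik\,\omega_k - \nu\Delta_k(\partial_y\omega_k) = 0,
\]
and organize the resulting terms into four groups: (i) the diagonal transport contributions $\operatorname{Re}\langle iky\,\omega_k,\omega_k\rangle$ and $\operatorname{Re}\langle iky\,\partial_y\omega_k,\partial_y\omega_k\rangle$, which vanish identically since $y$ is real; (ii) the viscous contributions, which after integration by parts give $-c\,(\operatorname{Dis}_1+\operatorname{Dis}_2)$ up to boundary terms; (iii) the hypocoercive contributions from the weighted $\|\partial_y\omega_k\|^2$ term and the cross term $\operatorname{Re}\langle ik\omega_k,\partial_y\omega_k\rangle$; and (iv) the singular-integral contributions involving $\mathfrak{J}_k$. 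The two regimes $|k|\ge\nu$ and $|k|\le\nu$ would be treated in parallel, the only structural difference being the presence of the $\beta$-cross term in the former.

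For the viscous and hypocoercive groups, I would first record $\frac{d}{dt}\|\omega_k\|^2 = -2\nu\|\nabla_k\omega_k\|^2 = -2\operatorname{Dis}_1$ and, after applying $\partial_y$, the corresponding identity for $c_\alpha\alpha\|\partial_y\omega_k\|^2$, which yields $-2c_\alpha\operatorname{Dis}_2$, the coupling term $-2c_\alpha\alpha\operatorname{Re}\langle ik\omega_k,\partial_y\omega_k\rangle$, and a boundary contribution at $y=\pm1$. The role of $-c_\beta\beta\operatorname{Re}\langle ik\omega_k,\partial_y\omega_k\rangle$ is the standard Villani-type device: once the transport pieces cancel and the viscous pieces are absorbed, its time-derivative produces a term of size $\beta k^2\|\omega_k\|^2$, and the choices $\beta=\nu^{1/3}|k|^{-4/3}$, $\alpha=\nu^{2/3}|k|^{-2/3}$ are calibrated precisely so that $\beta k^2=\lambda_k$; this is what upgrades the bare viscous decay $\nu k^2$ to the enhanced rate $\lambda_k$ and accounts for both $\operatorname{Dis}_3$ and the factor $\lambda_k E_k$. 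All indefinite cross terms would be reabsorbed into $\operatorname{Dis}_1,\operatorname{Dis}_2$ by Young's inequality using the size relations among $\alpha,\beta,\lambda_k$. In the regime $|k|\le\nu$ one has $\beta=0$, $\alpha=1$, and the decay $\lambda_k=\nu$ follows directly from $\operatorname{Dis}_1$ together with the Poincaré inequality $\|\omega_k\|\lesssim\|\partial_y\omega_k\|$ afforded by $\omega_k(\pm1)=0$.

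The decisive group is (iv). Differentiating $c_\tau\operatorname{Re}\langle\omega_k,\mathfrak{J}_k[\omega_k]\rangle$ and using the self-adjointness $\mathfrak{J}_k=\mathfrak{J}_k^{\ast}$ of Lemma~\ref{le2.3} together with the time-independence of $\mathfrak{J}_k$ reduces the computation to $2c_\tau\operatorname{Re}\langle\partial_t\omega_k,\mathfrak{J}_k[\omega_k]\rangle$; the transport part $-2c_\tau\operatorname{Re}\langle iky\,\omega_k,\mathfrak{J}_k[\omega_k]\rangle$ is where the inviscid-damping dissipation is created. Here I would invoke the defining Green's-function structure of $\mathfrak{J}_k$ (built from $G_k$, with $\Delta_k\phi_k=\omega_k$): its singular kernel is engineered so that pairing the transport term against $\mathfrak{J}_k[\omega_k]$ produces, up to controllable remainders, a positive multiple of $\operatorname{Dis}_4=|k|^2\|\nabla_k\phi_k\|^2$, and the analogous computation at the level of $\partial_y\omega_k$ produces $\operatorname{Dis}_5$. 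The remaining viscous and commutator remainders are then estimated via $\|\mathfrak{J}_k\|_{L^2\to L^2}\lesssim1$ (Lemma~\ref{le2.1}) and $\|[\partial_y,\mathfrak{J}_k]\|_{L^2\to L^2}\lesssim|k|$ (Lemma~\ref{le2.2}), the latter being essential to commute $\partial_y$ past $\mathfrak{J}_k$ in the $\operatorname{Dis}_5$ computation.

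Finally I would collect all contributions and close the estimate by choosing the constants in the order $c_\tau\ll c_\alpha,c_\beta\ll1$, all independent of $\nu$: the bounded, self-adjoint structure of $\mathfrak{J}_k$ guarantees $E_k[\omega_k]\approx\|\omega_k\|^2+\alpha\|\partial_y\omega_k\|^2$, so that the $\mathfrak{J}_k$-generated dissipation is not overwhelmed by its own error terms, and the Young-absorption of all cross terms leaves a clean bound of the form $\frac{d}{dt}E_k+c_0 D_k+c_0\lambda_k E_k\le0$. I expect the main obstacle to be precisely the algebraic extraction in group (iv)—verifying that the transport term paired with the singular operator $\mathfrak{J}_k$ reproduces exactly $\operatorname{Dis}_4$ and $\operatorname{Dis}_5$ with the correct sign—together with the uniform control of the viscous boundary terms at $y=\pm1$, which do not vanish because $\partial_y\omega_k$ is unconstrained on the boundary and must be reabsorbed using the trace of $\omega_k$ and interpolation against $\operatorname{Dis}_1,\operatorname{Dis}_2$.
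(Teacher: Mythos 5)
The paper does not actually prove Proposition \ref{linear_es}: it is quoted verbatim from Arbon--Bedrossian \cite{arbon2024} (building on \cite{BHIW2023}), and the authors explicitly omit the argument. Your outline is a faithful reconstruction of the proof given there: the same hypocoercive energy with the Villani-type cross term calibrated so that $\beta k^{2}=\lambda_k$, the same use of the self-adjointness and $L^2$-boundedness of $\mathfrak{J}_k$ together with the commutator bound $\|[\partial_y,\mathfrak{J}_k]\|_{L^2\to L^2}\lesssim|k|$ to extract $\operatorname{Dis}_4$ and $\operatorname{Dis}_5$ from the transport term, the same splitting of the regimes $|k|\ge\nu$ and $|k|\le\nu$ with Poincar\'e in the latter, and the same ordering of the constants $c_\tau, c_\alpha, c_\beta$. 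One correction: the viscous boundary terms you worry about at the end do not need to be ``reabsorbed by trace and interpolation'' --- they vanish identically. Since $\omega_k(t,\pm1)=0$ for all $t$, one has $\partial_t\omega_k(t,\pm1)=0$, and evaluating the equation $\partial_t\omega_k=-iky\,\omega_k+\nu(\partial_y^2-k^2)\omega_k$ on the boundary forces $\partial_y^2\omega_k(t,\pm1)=0$; hence the boundary contribution $\nu\,\mathrm{Re}\,[\partial_y^2\omega_k\,\overline{\partial_y\omega_k}]_{-1}^{1}$ arising in the $\alpha\|\partial_y\omega_k\|^2$ computation is zero, with a similar cancellation in the $\mathfrak{J}_k[\partial_y\omega_k]$ term. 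Attempting instead to absorb a boundary trace of $\partial_y^2\omega_k\,\overline{\partial_y\omega_k}$ into $\operatorname{Dis}_1,\operatorname{Dis}_2$ without this identity would not close with constants uniform in $\nu$, so this observation is essential rather than cosmetic. With that replacement your plan matches the cited proof.
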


\section{Nonlinear stability}
In this section, we prove the nonlinear stability stated in Theorem \ref{thm1.2}. The proof is divided into two subsections. The first subsection focuses on the construction of certain energy functionals and the establishment of preliminary lemmas concerning the stream function $\phi$. The second subsection employs a weighted energy method in space to control the potential growth of nonlinear terms, thereby closing the energy estimates.

\subsection{Notations and Preliminary Lemmas}In this subsection, corresponding to the energy functionals $ E_k\left[\omega_k\right]$ and $ D_k\left[\omega_k\right]$ defined above for the $k$-mode, we introduce the following weighted spatial energy functionals:
$$ \mathscr{E}\stackrel{\mathrm{def}}{=}  \int_{\R} e^{2c\lambda_k t} \langle k \rangle^{2m} \langle k^{-1} \rangle^{2\varepsilon}  E_k\left[\omega_k\right] \,{dk} ,$$
 and
 $$  \mathscr{D} \stackrel{\mathrm{def}}{=} \int_{\R} e^{2c\lambda_k t} \langle k \rangle^{2m} \langle{k}^{-1} \rangle^{2\varepsilon}  D_k\left[\omega_k\right] \,{dk} , \quad  \mathscr{D}_{i}\stackrel{\mathrm{def}}{=} \int_{\R} e^{2c\lambda_k t} \langle k \rangle^{2m} \langle{k}^{-1} \rangle^{2\varepsilon} \operatorname{Dis}_{ i} \,{dk}. $$

\begin{lemma}\label{pre1}
	There hold the following estimates:
	\begin{align*}
		&\left\|  \| \partial_{y} \phi_{\ell}\|_{L^\infty_y }\right\|_{L^1_\ell}\lesssim \mathscr{E}^{\frac{1}{2}},\quad  \left\| |\ell| \big\| \phi_{\ell}\big\|_{L^\infty_y }\right\|_{L^1_\ell}\lesssim \mathscr{D}_4^{\frac{1}{2}},\quad\left\| \big\| |k|^{-\frac{1}{2}}\partial_{y} \omega_k \big\|_{L^2_y } \right\|_{L^1_k} \lesssim \nu^{-\frac{1}{2}} \mathscr{D}_1^{\frac{1}{2}}, \nn\\
		&  \left\|   \langle{k}^{-1} \rangle^{\frac{1}{2}} \| \omega_k \|_{L^2_y}  \right\|_{L^1_k} \lesssim \mathscr{E}^{\frac{1}{2}},\quad \left\| \|  \omega_k \|_{L^\infty_y }\right\|_{L^1_k} \lesssim \nu^{-\frac{1}{2}} \mathscr{D}_1^{\frac{1}{2}}.
	\end{align*}
\end{lemma}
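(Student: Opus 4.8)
The plan is to prove all five inequalities by the same two-step scheme. First I would derive a pointwise-in-frequency bound on the quantity inside the $L^1$ norm, controlling it by $E_k[\omega_k]^{1/2}$ or $\operatorname{Dis}_i^{1/2}$ times an explicit power of $|k|$. Then I would apply the Cauchy--Schwarz inequality in the frequency variable, peeling off the factor $e^{c\lambda_k t}\langle k\rangle^{m}\langle k^{-1}\rangle^{\varepsilon}$ so that one Cauchy--Schwarz factor reproduces $\mathscr{E}^{1/2}$ or $\mathscr{D}_i^{1/2}$, while the other is a purely algebraic weight integral that must be shown to converge uniformly in $t$ (using $e^{-2c\lambda_k t}\le 1$).

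For the vorticity estimates (the third and fifth), I would use the one-dimensional Agmon inequality $\|f\|_{L^\infty_y}^2\lesssim\|f\|_{L^2_y}\|\partial_y f\|_{L^2_y}$, valid because $\omega_k$ vanishes at $y=\pm1$, together with the elementary bounds $|k|\,\|\omega_k\|_{L^2_y}\le\|\nabla_k\omega_k\|_{L^2_y}$ and $\|\partial_y\omega_k\|_{L^2_y}\le\|\nabla_k\omega_k\|_{L^2_y}$ built into $\operatorname{Dis}_1$; this yields $\|\omega_k\|_{L^\infty_y}\lesssim|k|^{-1/2}\|\nabla_k\omega_k\|_{L^2_y}$, from which the fifth estimate follows, while the third is immediate since $\operatorname{Dis}_1$ already contains $\nu\|\partial_y\omega_k\|_{L^2_y}^2$. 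In both cases the prefactor $\nu^{-1/2}$ on the right-hand side arises from the explicit factor $\nu$ in $\operatorname{Dis}_1$. The fourth estimate uses only $E_k[\omega_k]\gtrsim\|\omega_k\|_{L^2_y}^2$ followed directly by Cauchy--Schwarz.

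For the two stream-function estimates I would first run the elliptic energy estimate for $\Delta_\ell\phi_\ell=\omega_\ell$ with $\phi_\ell(\pm1)=0$. Testing against $\phi_\ell$ and invoking the Poincaré inequality gives $\langle\ell\rangle\,\|\partial_y\phi_\ell\|_{L^2_y}\lesssim\|\omega_\ell\|_{L^2_y}$, and from $\partial_y^2\phi_\ell=\omega_\ell+\ell^2\phi_\ell$ one obtains $\|\partial_y^2\phi_\ell\|_{L^2_y}\lesssim\|\omega_\ell\|_{L^2_y}$. Feeding these into Agmon produces the pointwise bound $\|\partial_y\phi_\ell\|_{L^\infty_y}\lesssim\langle\ell\rangle^{-1/2}\|\omega_\ell\|_{L^2_y}\lesssim\langle\ell\rangle^{-1/2}E_\ell[\omega_\ell]^{1/2}$, which closes the first estimate after Cauchy--Schwarz. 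For the second estimate I would instead keep $\|\phi_\ell\|_{L^\infty_y}\lesssim\|\phi_\ell\|_{L^2_y}^{1/2}\|\partial_y\phi_\ell\|_{L^2_y}^{1/2}\lesssim|\ell|^{-1/2}\|\nabla_\ell\phi_\ell\|_{L^2_y}$ (again by Agmon and the structure of $\operatorname{Dis}_4$), so that $|\ell|\,\|\phi_\ell\|_{L^\infty_y}\lesssim|\ell|^{1/2}\|\nabla_\ell\phi_\ell\|_{L^2_y}$ and Cauchy--Schwarz against $\mathscr{D}_4$ applies.

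The main point to watch is the convergence of the residual weight integrals at zero frequency---precisely the continuous-spectrum difficulty emphasized in the introduction. For the second, third, fourth, and fifth estimates the residual integrands behave like $|k|^{2\varepsilon-1}$ near $k=0$, which diverges logarithmically when $\varepsilon=0$ but is integrable for every $\varepsilon>0$; this is exactly where the weight $\langle k^{-1}\rangle^{\varepsilon}$ earns its keep. At the high-frequency end every residual integrand decays at least like $|k|^{-2m}$, which is integrable since $m>1$. I expect no genuine obstacle beyond this bookkeeping, since each pointwise bound carries a comfortable margin; the only delicate verification is confirming that the power of $|k|$ generated by each pointwise step still leaves an exponent strictly above $-1$ after the factor $\langle k^{-1}\rangle^{-2\varepsilon}$ is included.
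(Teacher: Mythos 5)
Your proposal is correct and follows essentially the same route as the paper: a pointwise-in-frequency Gagliardo--Nirenberg/Agmon-type bound in $y$ (combined with the elliptic estimates for $\Delta_\ell\phi_\ell=\omega_\ell$), followed by Cauchy--Schwarz in the frequency variable, with the residual algebraic weight integral made convergent near $k=0$ by the factor $\langle k^{-1}\rangle^{-\varepsilon}$ and at infinity by $m>1$. The paper condenses all the pointwise steps into the single inequality $\|f\|_{L^\infty_y}\lesssim |\ell|^{-1/2}\|\nabla_\ell f\|_{L^2_y}$, but the bookkeeping and the source of each $\nu^{-1/2}$ are exactly as you describe.
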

	\begin{proof}
		Concerning the first inequality, we first make use of the Gagliardo-Nirenberg inequality $ \| f\|_{L^\infty_y} \lesssim |\ell|^{-\frac{1}{2}} \| \nabla_\ell f\|_{L^2_y}$, followed by an application of H\"older's inequality, which yields
		\begin{align*}
			\left\|  \| \partial_{y} \phi_{\ell}\|_{L^\infty_y }\right\|_{L^1_\ell}
\lesssim& \left\|  \| |\ell|^{-\frac{1}{2}} \partial_{y} \nabla_\ell \phi_{\ell}\|_{L^2_y }\right\|_{L^1_\ell} \nn\\
\lesssim& \left\| e^{c \lambda_\ell t} \langle \ell \rangle^{m} \langle{\ell}^{-1} \rangle^{\varepsilon}  \|  \omega_{\ell} \|_{L^2_y}  \right\|_{L^2_\ell} \left\|  \langle \ell \rangle^{-m} \langle{\ell}^{-1} \rangle^{-\varepsilon} |\ell|^{-\frac{1}{2}} \right\|_{L^2_\ell}\nn\\
			\lesssim& \mathscr{E}^{\frac{1}{2}}.
		\end{align*}
		Similarly, one can get
		\begin{align*}
		\left\| |\ell| \| \phi_{\ell}\|_{L^\infty_y }\right\|_{L^1_\ell}
&\lesssim \left\| |\ell|^{\frac{1}{2}} \| \nabla_\ell \phi_{\ell}\|_{L^2_y }\right\|_{L^1_\ell} \nn\\
&\lesssim \left\| e^{c \lambda_\ell t} \langle \ell \rangle^{m} \langle{\ell}^{-1} \rangle^{\varepsilon} |\ell| \| \nabla_\ell \phi_{\ell} \|_{L^2_y}  \right\|_{L^2_\ell} \left\|  \langle \ell \rangle^{-m} \langle{\ell}^{-1} \rangle^{-\varepsilon} |\ell|^{-\frac{1}{2}} \right\|_{L^2_\ell}\nn\\
		&\lesssim \mathscr{D}_4^{\frac{1}{2}},
		\end{align*}
		and
		\begin{align*}
		\left\| \|  \omega_k \|_{L^\infty_y }\right\|_{L^1_k} \lesssim \left\| |k|^{-\frac{1}{2}}\|  \nabla_k \omega_k \|_{L^2_y }\right\|_{L^1_k} \lesssim \nu^{-\frac{1}{2}} \mathscr{D}_1^{\frac{1}{2}} \|\langle  k \rangle^{-m} \langle{k}^{-1} \rangle^{- \varepsilon} |k|^{-\frac{1}{2}} \|_{L^2_k} \lesssim \nu^{-\frac{1}{2}} \mathscr{D}_1^{\frac{1}{2}}.
		\end{align*}
		Furthermore, H\"older's inequality yields directly
		\begin{align*}
			\left\|   \langle{k}^{-1} \rangle^{\frac{1}{2}} \| \omega_k \|_{L^2_y} \right\|_{L^1_k} \lesssim& \mathscr{E}^{\frac{1}{2}} \| \langle  k \rangle^{-m} \langle{k}^{-1} \rangle^{\frac{1}{2}- \varepsilon}\|_{L^2_k} \lesssim \mathscr{E}^{\frac{1}{2}},\quad
\left\| \big\| |k|^{-\frac{1}{2}}\partial_{y} \omega_k \big\|_{L^2_y } \right\|_{L^1_k} \lesssim \nu^{-\frac{1}{2}} \mathscr{D}_1^{\frac{1}{2}}.
		\end{align*}	
This completes the proof of Lemma \ref{pre1}.
	\end{proof}

\begin{lemma}\label{pre2}
	There hold the following estimates:
	\begin{align*}
		&\left\|  e^{c \lambda_k t}  \sqrt{\alpha(k)}   \langle k \rangle^{m} \langle{k}^{-1} \rangle^{\varepsilon} |k|  \|   \omega_k\|_{L^2_y} \right\|^2_{L^2_k} \lesssim \mathscr{D}^{\frac{1}{2}}_{1} \mathscr{D}^{\frac{1}{2}}_{3},\\
		&\int_{\{|k| \ge \nu\}} e^{2c \lambda_k t} |k|^{\frac{2}{3}}  \langle k \rangle^{2m} \langle{k}^{-1} \rangle^{2\varepsilon} \| \omega_{k} \|^2_{L^2_y} {d} k \lesssim \nu^{-\frac{1}{3}}  \mathscr{E}^{\frac{1}{2}} \mathscr{D}_1^{\frac{1}{4}} \mathscr{D}_3^{\frac{1}{4}},\\
		&\left\| \nu^{\frac{1}{12}} |k|^{\frac{1}{6}} e^{c \lambda_k t} \langle k \rangle^m \langle{k}^{-1} \rangle^{\varepsilon} \| \omega \|_{L^2_y}\right\|_{L^2_{\{|k| \ge \nu\}}} \lesssim \mathscr{E}^{\frac{1}{4}} \mathscr{D}_3^{\frac{1}{4}},\\
		&\left\|e^{c \lambda_k t} \langle k \rangle^{m} \langle {k}^{-1} \rangle^{\varepsilon}  |k|^{\frac{1}{2}} \| \omega_{k}\|_{L^2_y}    \right\|_{L^2_{\{|k|\ge \nu\}}} \lesssim \nu^{-\frac{1}{4}} \mathscr{D}_1^{\frac{1}{8}} \mathscr{D}_3^{\frac{3}{8}}.
	\end{align*}
	\begin{proof}
		For the first inequality, we decompose the frequency domain into high and low parts and estimate them separately. Specifically, we write:
		\begin{align*}
			&\left\|  e^{c \lambda_k t}  \sqrt{\alpha(k)}   \langle k \rangle^{m} \langle{k}^{-1} \rangle^{\varepsilon}  |k| \|   \omega_k\|_{L^2_y} \right\|^2_{L^2_k}\nn\\
 &\quad= \left( \int_{\{|k| \le \nu\}} + \int_{\{|k| \ge \nu\}} \right) e^{2c \lambda_k t}  \alpha(k)   \langle k \rangle^{2m} \langle{k}^{-1} \rangle^{2\varepsilon} |k|^2  \|   \omega_k\|^2_{L^2_y} \,{dk}.
		\end{align*}
		For the low-frequency part, using the condition $ |k| \le \nu$ and H\"older's inequality, we obtain
		\begin{align*}
			& \int_{\{|k| \le \nu\}} e^{2c \lambda_k t}  \alpha(k)   \langle k \rangle^{2m} \langle{k}^{-1} \rangle^{2\varepsilon} |k|^2  \|   \omega_k\|^2_{L^2_y} \,{dk}\nn\\
&\quad\lesssim \nu \int_{\{|k| \le \nu\}} e^{2c \lambda_k t}  \   \langle k \rangle^{2m} \langle{k}^{-1} \rangle^{2\varepsilon} |k|  \|   \omega_k\|^2_{L^2_y} \,{dk}\nn\\
			&\quad\lesssim \nu^{\frac{1}{2}} \left\| e^{c \lambda_k t}     \langle k \rangle^{m} \langle{k}^{-1} \rangle^{\varepsilon} |k|  \|   \omega_k\|_{L^2_y}\right\|_{L^2_{\{|k| \le \nu\}}}  \nu^{\frac{1}{2}} \left\| e^{c \lambda_k t}     \langle k \rangle^{m} \langle{k}^{-1} \rangle^{\varepsilon}  \|   \omega_k\|_{L^2_y}\right\|_{L^2_{\{|k| \le \nu\}}}\nn\\
 &\quad\lesssim \mathscr{D}^{\frac{1}{2}}_{1} \mathscr{D}^{\frac{1}{2}}_{3}.
		\end{align*}
		Similarly, the high-frequency part can be estimated as follows:
		\begin{align*}
			\int_{\{|k| \ge \nu\}} e^{2c \lambda_k t}  \alpha(k)   \langle k \rangle^{2m} \langle{k}^{-1} \rangle^{2\varepsilon} |k|^2  \|   \omega_k\|^2_{L^2_y} \,{dk} &\lesssim  \int_{\{|k| \ge \nu\}} \nu^{\frac{2}{3}} |k|^{\frac{4}{3}} e^{2c \lambda_k t}  \   \langle k \rangle^{2m} \langle{k}^{-1} \rangle^{2\varepsilon}  \|   \omega_k\|^2_{L^2_y} \,{dk}\nn\\
			&\lesssim \mathscr{D}^{\frac{1}{2}}_{1} \mathscr{D}^{\frac{1}{2}}_{3}.
		\end{align*}
		To prove the second and fourth inequalities, we invoke the definition of $ \mathscr{D}_i$ and apply H\"older's inequality
		\begin{align*}
			&\int_{\{|k| \ge \nu\}} e^{2c \lambda_k t} |k|^{\frac{2}{3}}  \langle k \rangle^{2m} \langle{k}^{-1} \rangle^{2\varepsilon} \| \omega_{k} \|^2_{L^2_y} {d} k\\
			&\quad \lesssim \int_{\{|k| \ge \nu\}} e^{c \lambda_k t}   \langle k \rangle^{m} \langle{k}^{-1} \rangle^{\varepsilon} \| \omega_{k} \|_{L^2_y} \cdot e^{\frac{c}{2} \lambda_k t}   \langle k \rangle^{\frac{m}{2}} \langle{k}^{-1} \rangle^{\frac{\varepsilon}{2}} |k|^{\frac{1}{2}} \| \omega_{k} \|^{\frac{1}{2}}_{L^2_y} \cdot e^{\frac{c}{2} \lambda_k t}   \langle k \rangle^{\frac{m}{2}} \langle{k}^{-1} \rangle^{\frac{\varepsilon}{2}} |k|^{\frac{1}{6}}\| \omega_{k} \|^{\frac{1}{2}}_{L^2_y}  {d} k\\
			&\quad\lesssim \nu^{-\frac{1}{3}}  \mathscr{E}^{\frac{1}{2}} \mathscr{D}_1^{\frac{1}{4}} \mathscr{D}_3^{\frac{1}{4}},
		\end{align*}
		and
		\begin{align*}
		&\int_{\{|k| \ge \nu\}} e^{2c \lambda_k t} \langle k \rangle^{2m} \langle\frac{1}{k} \rangle^{2\varepsilon}  |k| \| \omega_{k}\|^2_{L^2_y}    {d} k \nn\\
&\quad\lesssim \left\| \langle k \rangle^{\frac{3m}{2}} \langle\frac{1}{k} \rangle^{\frac{3\varepsilon}{2}} |k|^{\frac{1}{2}} \|\omega_{k} \|^{\frac{3}{2}}_{L^2_y}\right\|_{L^\frac{4}{3}_{\{|k| \ge \nu\}}}  \left\| \langle k \rangle^{\frac{m}{2}} \langle\frac{1}{k} \rangle^{\frac{\varepsilon}{2}} |k|^{\frac{1}{2}} \|\omega_{k} \|^{\frac{1}{2}}_{L^2_y}\right\|_{L^4_{\{|k| \ge \nu\}}}\\
		&\quad\lesssim \nu^{-\frac{1}{2}} \mathscr{D}_1^{\frac{1}{4}} \mathscr{D}_3^{\frac{3}{4}}.
		\end{align*}
The third inequality follows directly from an application of  H\"older's inequality. This completes the proof of Lemma \ref{pre2}.

	\end{proof}
\end{lemma}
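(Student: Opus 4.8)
The plan is to prove all four bounds by a single mechanism: reduce each left-hand side to a weighted $k$-integral of $\|\omega_k\|_{L^2_y}^2$ against a power of $|k|$, and then distribute that integrand across the building blocks of $\mathscr{E}$, $\mathscr{D}_1$ and $\mathscr{D}_3$ via H\"older's inequality in $k$, choosing the exponents so that the factors reproduce exactly the densities defining those functionals. The whole argument is driven by the scaling relations valid on the high-frequency range $|k|\ge\nu$,
\[
\lambda_k=\nu^{1/3}|k|^{2/3},\qquad \alpha=\nu^{2/3}|k|^{-2/3},\qquad \nu^{1/2}|k|\,\lambda_k^{1/2}=\alpha|k|^2,\qquad |k|^{2/3}=\nu^{-1/3}\lambda_k,
\]
together with the elementary lower bounds $\operatorname{Dis}_1\ge\nu|k|^2\|\omega_k\|_{L^2_y}^2$ (the $ik$-component of $\nabla_k$), $\operatorname{Dis}_3=\lambda_k\|\omega_k\|_{L^2_y}^2$, and the coercivity $E_k[\omega_k]\approx\|\omega_k\|_{L^2_y}^2+\alpha\|\partial_y\omega_k\|_{L^2_y}^2$, so that $\mathscr{E}$ dominates the weighted integral of $\|\omega_k\|_{L^2_y}^2$.

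For the first inequality I would split the integral at $|k|=\nu$. On $|k|\ge\nu$ the identity $\alpha|k|^2=(\nu|k|^2)^{1/2}\lambda_k^{1/2}$ exhibits the integrand as the geometric mean of the $\operatorname{Dis}_1$- and $\operatorname{Dis}_3$-densities, so one Cauchy--Schwarz in $k$ produces $\mathscr{D}_1^{1/2}\mathscr{D}_3^{1/2}$. On $|k|\le\nu$, where $\alpha=1$ and $\lambda_k=\nu$, I would use $|k|\le\nu$ to replace $|k|^2$ by $\nu|k|=(\nu|k|^2)^{1/2}\nu^{1/2}$, which is again the geometric mean of the two dissipation densities, and conclude by Cauchy--Schwarz.

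Inequalities 2 and 4 are the genuinely interpolatory ones and require a three- (resp. two-) factor H\"older split with exponents tuned so that the powers of $|k|$, of $\|\omega_k\|_{L^2_y}$, of the weight $\langle k\rangle^{m}\langle k^{-1}\rangle^{\varepsilon}$ and of $e^{c\lambda_k t}$ all balance. For the second bound I would factor $|k|^{2/3}\|\omega_k\|^2=\|\omega_k\|\cdot\bigl(|k|^{1/2}\|\omega_k\|^{1/2}\bigr)\cdot\bigl(|k|^{1/6}\|\omega_k\|^{1/2}\bigr)$ and apply H\"older with exponents $(2,4,4)$: the first factor is controlled by $\mathscr{E}^{1/2}$, the second by $\nu^{-1/4}\mathscr{D}_1^{1/4}$ through $\operatorname{Dis}_1\ge\nu|k|^2\|\omega_k\|^2$, and the third by $\nu^{-1/12}\mathscr{D}_3^{1/4}$ through $|k|^{2/3}=\nu^{-1/3}\lambda_k$, whose product is precisely $\nu^{-1/3}\mathscr{E}^{1/2}\mathscr{D}_1^{1/4}\mathscr{D}_3^{1/4}$ since $-\tfrac14-\tfrac1{12}=-\tfrac13$. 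For the fourth bound I would instead write $|k|\|\omega_k\|^2=\bigl(|k|^{1/2}\|\omega_k\|^{3/2}\bigr)\bigl(|k|^{1/2}\|\omega_k\|^{1/2}\bigr)$ and use H\"older with exponents $(4/3,4)$, bounding the first factor by $\nu^{-1/4}\mathscr{D}_3^{3/4}$ and the second by $\nu^{-1/4}\mathscr{D}_1^{1/4}$, which combine to $\nu^{-1/2}\mathscr{D}_1^{1/4}\mathscr{D}_3^{3/4}$ and yield the stated estimate after taking the square root. The third inequality is the simplest: since $\nu^{1/6}|k|^{1/3}=\lambda_k^{1/2}$ on $|k|\ge\nu$, the square of its left-hand side equals $\int_{\{|k|\ge\nu\}}\lambda_k^{1/2}e^{2c\lambda_k t}\langle k\rangle^{2m}\langle k^{-1}\rangle^{2\varepsilon}\|\omega_k\|^2\,dk$, and splitting $\lambda_k^{1/2}\|\omega_k\|^2=\|\omega_k\|\cdot\lambda_k^{1/2}\|\omega_k\|$ followed by Cauchy--Schwarz gives $\mathscr{E}^{1/2}\mathscr{D}_3^{1/2}$ at once.

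The only genuine obstacle is the exponent bookkeeping: in each split one must verify that raising the chosen factors to their H\"older exponents exactly reconstitutes the weighted densities of $\mathscr{E}$, $\mathscr{D}_1$, $\mathscr{D}_3$, and that the residual powers of $\nu$ assemble into the advertised prefactors $\nu^{-1/3}$, $\nu^{-1/4}$. This verification is purely algebraic and relies on nothing beyond $\lambda_k=\nu^{1/3}|k|^{2/3}$, $\alpha=\nu^{2/3}|k|^{-2/3}$ on $|k|\ge\nu$ and the bound $|k|\le\nu$ on the low-frequency piece; no analytic input beyond H\"older's inequality is needed, which is why the frequency-regime separation built into $\alpha$ and $\lambda_k$ is what makes these interpolation estimates close.
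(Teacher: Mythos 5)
Your proposal is correct and follows essentially the same route as the paper: the same split at $|k|=\nu$ with the geometric-mean identity $\alpha|k|^2=(\nu|k|^2)^{1/2}\lambda_k^{1/2}$ for the first bound, the same $(2,4,4)$ and $(4/3,4)$ H\"older factorizations for the second and fourth, and the same Cauchy--Schwarz for the third (which the paper only states in passing). The exponent bookkeeping you describe matches the paper's computation exactly.
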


\begin{lemma}\label{pre3}
	There hold the following estimates:
	\begin{align*}
		\left\|  e^{c \lambda_\ell t}     \langle \ell \rangle^{m} \langle{\ell}^{-1} \rangle^{\varepsilon} |\ell|^{\frac{7}{6}}  \| \partial_{y}  \phi_\ell\|_{L^\infty_y} \right\|^2_{L^2_{\{|\ell| \ge \nu\}}}
\lesssim& \nu^{-\frac{2}{3}}\mathscr{D}_{5};\\
		\left\|   \| \partial^2_{y}  \phi_\ell\|_{L^\infty_y} \right\|_{L^1_\ell}  \lesssim& \nu^{-\frac{1}{4}} \mathscr{E}^{\frac{1}{4}} \mathscr{D}^{\frac{1}{4}}_{1};\\
		\left\|\nu^{\frac{1}{3}} |\ell|^{\frac{2}{3}} \langle \ell \rangle^{m} \langle{\ell}^{-1} \rangle^{\varepsilon}  \| \partial^2_{y} \phi_{\ell} \|_{L^\infty_y}\right\|_{L^2_\ell} \lesssim& \nu^{-\frac{1}{4}} \mathscr{D}_2^{\frac{1}{4}} \mathscr{D}_5^{\frac{1}{4}}.
	\end{align*}
	\begin{proof}
		To prove the first inequality, we employ the Gagliardo-Nirenberg inequality, which yields
		\begin{align*}
			\left\|  e^{c \lambda_\ell t}     \langle \ell \rangle^{m} \langle{\ell}^{-1} \rangle^{\varepsilon} |\ell|^{\frac{7}{6}}  \| \partial_{y}  \phi_\ell\|_{L^\infty_y} \right\|^2_{L^2_{\{|\ell| \ge \nu\}}}
\lesssim& \int_{\{|\ell| \ge \nu\}} e^{2c \lambda_\ell t}     \langle \ell \rangle^{2m} \langle{\ell}^{-1} \rangle^{2\varepsilon} |\ell|^{\frac{4}{3}}  \| \partial_{y} \nabla_\ell \phi_\ell\|^2_{L^2_y}
\,{d\ell}\nn\\
\lesssim& \nu^{-\frac{2}{3}}\mathscr{D}_{5}.
		\end{align*}
		In addition, the second and third inequalities can be obtained by using the Gagliardo-Nirenberg inequality together with H\"older's inequality, giving
		\begin{align*}
			\left\|   \| \partial^2_{y}  \phi_\ell\|_{L^\infty_y} \right\|_{L^1_\ell} &\lesssim \int_{\R} \| \partial_y^2 \phi_{\ell}\|^{\frac{1}{2}}_{L^2_y} \| \partial_y^3 \phi_{\ell}\|^{\frac{1}{2}}_{L^2_y}  {d\ell} \\
			& \lesssim \int_{\R} \langle \ell \rangle^{-m} \langle{\ell}^{-1} \rangle^{-\varepsilon} \left(\langle \ell \rangle^{\frac{m}{2}} \langle{\ell}^{-1} \rangle^{\frac{\varepsilon}{2}} \| \partial_y^2 \phi_{\ell}\|^{\frac{1}{2}}_{L^2_y}\right) \left(\langle \ell \rangle^{\frac{m}{2}} \langle{\ell}^{-1} \rangle^{\frac{\varepsilon}{2}} \| \partial_y^3 \phi_{\ell}\|^{\frac{1}{2}}_{L^2_y}\right)  {d\ell}\\
			&\lesssim \left\| \langle \ell \rangle^{-m} \langle{\ell}^{-1} \rangle^{-\varepsilon}\right\|_{L^2_\ell} \left\| \langle \ell \rangle^{m} \langle{\ell}^{-1} \rangle^{\varepsilon} \| \partial_y^2 \phi_{\ell}\|_{L^2_y}\right\|^{\frac{1}{2}}_{L^2_\ell} \left\| \langle \ell \rangle^{m} \langle{\ell}^{-1} \rangle^{\varepsilon} \| \partial_y \omega_{\ell}\|_{L^2_y}\right\|^{\frac{1}{2}}_{L^2_\ell} \\
			&\lesssim \nu^{-\frac{1}{4}} \mathscr{E}^{\frac{1}{4}} \mathscr{D}^{\frac{1}{4}}_{1} ,
		\end{align*}
		and
		\begin{align*}
			& \int_{\{|\ell| \ge \nu\}} \nu^{\frac{2}{3}} |\ell|^{\frac{4}{3}} \langle \ell \rangle^{2m} \langle{\ell}^{-1} \rangle^{2\varepsilon}  \| \partial^2_{y} \phi_{\ell} \|^2_{L^\infty_y} \,{d\ell}\\
			 &\quad\lesssim \int_{\{|\ell| \ge \nu\}} \nu^{\frac{1}{3}} |\ell|^{-\frac{1}{3}} \langle \ell \rangle^{m} \langle{\ell}^{-1} \rangle^{\varepsilon} |\ell| \| \partial^3_{y} \phi_{\ell} \|_{L^2_y} \cdot \nu^{\frac{1}{3}} |\ell|^{-\frac{1}{3}} \langle \ell \rangle^{m} \langle{\ell}^{-1} \rangle^{\varepsilon} |\ell| \| \partial^2_{y} \phi_{\ell} \|_{L^2_y} \,{d\ell}\\
			 &\quad\lesssim \nu^{-\frac{1}{2}} \mathscr{D}_5^{\frac{1}{2}} \mathscr{D}_2^{\frac{1}{2}}.
		\end{align*}
This completes the proof of Lemma \ref{pre3}.
	\end{proof}
\end{lemma}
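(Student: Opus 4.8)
The plan is to derive all three bounds from a single mechanism: a one–dimensional interpolation in the $y$ variable to trade the $L^\infty_y$ norm for $L^2_y$ control of one higher $y$–derivative, followed by the elliptic relation $(-\ell^2+\partial_y^2)\phi_\ell=\omega_\ell$ (together with the Dirichlet condition $\phi_\ell(\pm1)=0$) to convert $y$–derivatives of $\phi_\ell$ into $\omega_\ell$ and $\partial_y\omega_\ell$, and finally H\"older's inequality in the frequency variable $\ell$ to distribute the anisotropic weights $\langle\ell\rangle^{m}\langle\ell^{-1}\rangle^{\varepsilon}$ and the factors of $\alpha$ and $\nu$ so as to reproduce the target functionals $\mathscr{E}$, $\mathscr{D}_1$, $\mathscr{D}_2$, $\mathscr{D}_5$.

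For the first estimate I would apply the frequency–dependent embedding $\|f\|_{L^\infty_y}\lesssim|\ell|^{-1/2}\|\nabla_\ell f\|_{L^2_y}$ already used in Lemma \ref{pre1}, now to $f=\partial_y\phi_\ell$, which gives $|\ell|^{7/6}\|\partial_y\phi_\ell\|_{L^\infty_y}\lesssim|\ell|^{2/3}\|\partial_y\nabla_\ell\phi_\ell\|_{L^2_y}$. Squaring and inserting the weights, the key observation is that on $\{|\ell|\ge\nu\}$ one has $\alpha=\nu^{2/3}|\ell|^{-2/3}$, so that $|\ell|^{4/3}\|\partial_y\nabla_\ell\phi_\ell\|_{L^2_y}^2=\nu^{-2/3}\operatorname{Dis}_5$; integrating in $\ell$ then yields $\nu^{-2/3}\mathscr{D}_5$ with no H\"older step required. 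For the second and third estimates I would instead use the interpolation $\|g\|_{L^\infty_y}\lesssim\|g\|_{L^2_y}^{1/2}\|\partial_y g\|_{L^2_y}^{1/2}$ with $g=\partial_y^2\phi_\ell$, so that $\|\partial_y^2\phi_\ell\|_{L^\infty_y}$ is controlled by the geometric mean of $\|\partial_y^2\phi_\ell\|_{L^2_y}$ and $\|\partial_y^3\phi_\ell\|_{L^2_y}$. For the second estimate I would integrate in $\ell$, insert $1=\langle\ell\rangle^{-m}\langle\ell^{-1}\rangle^{-\varepsilon}\cdot\langle\ell\rangle^{m}\langle\ell^{-1}\rangle^{\varepsilon}$, and split by H\"older with exponents $(2,4,4)$: the pure–weight factor $\|\langle\ell\rangle^{-m}\langle\ell^{-1}\rangle^{-\varepsilon}\|_{L^2_\ell}$ is finite because $m>1$ and $\varepsilon$ is small, the $\partial_y^2\phi_\ell$ factor is absorbed into $\mathscr{E}^{1/4}$, and the $\partial_y^3\phi_\ell$ factor is absorbed into $\nu^{-1/4}\mathscr{D}_1^{1/4}$ after writing $\operatorname{Dis}_1=\nu\|\nabla_\ell\omega_\ell\|_{L^2}^2\ge\nu\|\partial_y\omega_\ell\|_{L^2}^2$.

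For the third estimate I would distribute the prefactor $\nu^{2/3}|\ell|^{4/3}$ symmetrically as $\bigl(\nu^{1/3}|\ell|^{2/3}\bigr)\cdot\bigl(\nu^{1/3}|\ell|^{2/3}\bigr)$ and apply Cauchy--Schwarz in $\ell$. The $\partial_y^3\phi_\ell$ half then matches $\mathscr{D}_2$, since $\nu\alpha\,\ell^2=\nu^{5/3}|\ell|^{4/3}$ on $\{|\ell|\ge\nu\}$ and $\|\partial_y^3\phi_\ell\|_{L^2_y}\sim\|\partial_y\omega_\ell\|_{L^2_y}$, producing $\nu^{-1/2}\mathscr{D}_2^{1/2}$; the $\partial_y^2\phi_\ell$ half matches $\mathscr{D}_5$, since $\alpha\,\ell^2=\nu^{2/3}|\ell|^{4/3}$ and $\|\partial_y^2\phi_\ell\|_{L^2_y}\le\|\partial_y\nabla_\ell\phi_\ell\|_{L^2_y}$, producing $\mathscr{D}_5^{1/2}$. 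Taking the square root gives the claimed $\nu^{-1/4}\mathscr{D}_2^{1/4}\mathscr{D}_5^{1/4}$.

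The step I expect to be most delicate is the elliptic conversion of $y$–derivatives of $\phi_\ell$ into $\omega_\ell$ and $\partial_y\omega_\ell$ \emph{uniformly in $\ell$}: differentiating $\partial_y^2\phi_\ell=\omega_\ell+\ell^2\phi_\ell$ produces $\partial_y^3\phi_\ell=\partial_y\omega_\ell+\ell^2\partial_y\phi_\ell$, and the extra term $\ell^2\partial_y\phi_\ell$ must be shown not to spoil the bound $\|\partial_y^3\phi_\ell\|_{L^2_y}\lesssim\|\partial_y\omega_\ell\|_{L^2_y}$ with a constant independent of $\ell$, in particular as $\ell\to0$ in the continuous spectrum. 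I would control this either through the explicit Green's function $G_\ell$ already recorded above, or by integrating by parts against the boundary condition $\phi_\ell(\pm1)=0$ to obtain the full elliptic estimate $\|\phi_\ell\|_{H^3_y}\lesssim\|\omega_\ell\|_{H^1_y}$. Beyond this, the remaining work is bookkeeping: keeping the factors of $\alpha$, $\nu$, and the weights $\langle\ell\rangle^{m}\langle\ell^{-1}\rangle^{\varepsilon}$ aligned with the definitions of $\mathscr{D}_2$ and $\mathscr{D}_5$ throughout the high–frequency regime $\{|\ell|\ge\nu\}$, and checking that the harmless lower–order term in the interpolation on the bounded interval $[-1,1]$ is absorbed in the same fashion.
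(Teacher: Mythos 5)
Your proposal is correct and follows essentially the same route as the paper: the frequency-dependent embedding $\|f\|_{L^\infty_y}\lesssim|\ell|^{-1/2}\|\nabla_\ell f\|_{L^2_y}$ for the first bound, the interpolation $\|g\|_{L^\infty_y}\lesssim\|g\|_{L^2_y}^{1/2}\|\partial_y g\|_{L^2_y}^{1/2}$ with the $(2,4,4)$ H\"older split for the second, and the symmetric $\nu^{1/3}|\ell|^{2/3}$ splitting with Cauchy--Schwarz for the third. The elliptic step you flag as delicate, namely $\|\partial_y^3\phi_\ell\|_{L^2_y}\lesssim\|\partial_y\omega_\ell\|_{L^2_y}$ uniformly in $\ell$, is used silently in the paper and is justified exactly as you suggest (from $\partial_y^3\phi_\ell=\partial_y\omega_\ell+\ell^2\partial_y\phi_\ell$ together with $\ell^2\|\partial_y\phi_\ell\|_{L^2_y}\lesssim\|\omega_\ell\|_{L^2_y}\lesssim\|\partial_y\omega_\ell\|_{L^2_y}$, the last step by Poincar\'e since $\omega_\ell(\pm1)=0$).
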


\subsection{Nonlinear Estimates } In this subsection, we primarily focus on controlling the potential growth of nonlinear terms. For convenience, we consider the equations of System \eqref{rewrite1} in the \( k \)-mode, specifically
\begin{eqnarray}\label{nonlinear_eq}
\left\{\begin{aligned}
&\partial_t \omega_k  = \mathcal{L}_{\omega, k} + \mathcal{N}_{\omega, k} ,\\
&    \Delta_{k} \phi_k =(-k^2 + \partial_y^2)\phi_k=\omega_k,\\
& \phi_k (t, \pm 1) = \omega_k(t, \pm 1) = 0,
\end{aligned}\right.
\end{eqnarray}
where
\begin{align*}
	\mathcal{L}_{\omega, k}\stackrel{\mathrm{def}}{=} -iky \omega_k   + \nu\Delta_{k}\omega_k , \quad
	 \mathcal{N}_{\omega, k}\stackrel{\mathrm{def}}{=}-(\nabla^{\top} \phi \cdot \nabla \omega)_k.
\end{align*}
Based on the definition of the energy functional $ \mathscr{E}$ above, direct computation yields
\begin{align}\label{w_energy}
\frac{d}{dt} \mathscr{E} =&  \int_{\R} e^{2c \lambda_k t} \langle k \rangle^{2m} \langle{k}^{-1} \rangle^{2\varepsilon}  \Big(\frac{d}{dt} E_{ k}[\omega_{k}] + 2c \lambda_k E_{ k}[\omega_{k}]\Big) \,{dk}= \mathcal{L}_{\omega} + \mathcal{N}_{\omega}  + 2c \lambda_k \mathscr{E},
\end{align}
where
\begin{align*}
\mathcal{L}_{\omega} &= 2  \int_{\R} e^{2c \lambda_k t} \langle k \rangle^{2m} \langle{k}^{-1} \rangle^{2\varepsilon}   \operatorname{Re} \langle \omega_k,  ( 1+ c_\tau \mathfrak{J}_k)\mathcal{L}_{\omega, k}\rangle \,{dk}\nn\\
&\quad + 2c_\alpha \int_{\R} e^{2c \lambda_k t}  \alpha(k) \langle k \rangle^{2m} \langle{k}^{-1} \rangle^{2\varepsilon}  \operatorname{Re} \langle \partial_{y} \omega_k, (1+ c_\tau\mathfrak{J}_k) \partial_{y} \mathcal{L}_{\omega, k}\rangle \,{dk}\nn\\
&\quad  -c_\beta   \int_{\R} \mathbf{1}_{\{|k| \ge \nu\}} e^{2c \lambda_k t} \beta(k) \langle k \rangle^{2m} \langle{k}^{-1} \rangle^{2\varepsilon}  \Big[ \operatorname{Re} \langle ik  \omega_k,  \partial_{y} \mathcal{L}_{\omega, k}\rangle  + \operatorname{Re} \langle ik \mathcal{L}_{\omega, k},  \partial_{y}\omega_k \rangle\Big]\,{dk},
\end{align*}
and
\begin{align*}
\mathcal{N}_{\omega} &= 2  \int_{\R} e^{2c \lambda_k t} \langle k \rangle^{2m} \langle{k}^{-1} \rangle^{2\varepsilon}   \operatorname{Re} \langle \omega_k,  ( 1+ c_\tau \mathfrak{J}_k) \mathcal{N}_{\omega, k} \rangle \,{dk}\nn\\
&\quad + 2c_\alpha \int_{\R} e^{2c \lambda_k t}  \alpha(k) \langle k \rangle^{2m} \langle{k}^{-1} \rangle^{2\varepsilon}  \operatorname{Re} \langle \partial_{y} \omega_k, (1+ c_\tau\mathfrak{J}_k) \partial_{y} \mathcal{N}_{\omega, k} \rangle \,{dk}\nn\\
&\quad  -c_\beta   \int_{\R} \mathbf{1}_{\{|k| \ge \nu\}} e^{2c \lambda_k t} \beta(k) \langle k \rangle^{2m} \langle{k}^{-1} \rangle^{2\varepsilon}  \Big[ \operatorname{Re} \langle ik  \omega_k,  \partial_{y} \mathcal{N}_{\omega, k} \rangle  + \operatorname{Re} \langle ik \mathcal{N}_{\omega, k},  \partial_{y}\omega_k \rangle\Big] \,{dk},\nn\\
& \stackrel{\mathrm{def}}{=} \mathcal{N}_{1} + \mathcal{N}_{2} + \mathcal{N}_{3}.
\end{align*}
For the linear term $ \mathcal{L}_{\omega}$, we can directly apply Proposition \ref{linear_es} to obtain
\begin{align}
	\mathcal{L}_{\omega} &\le -c_0 \mathscr{D} - c_0 \lambda_k \mathscr{E}.\label{lw=}
\end{align}
By choosing $ c \leq \frac{1}{4} c_0 $, and combining \eqref{w_energy} and \eqref{lw=}, we obtain
\begin{align}\label{energy}
	\frac{d}{dt} \mathscr{E} + 3c \mathscr{D} + 2c\lambda_k \mathscr{E} \le \mathcal{N}_{1} + \mathcal{N}_{2} + \mathcal{N}_{3}.
\end{align}
Next, we need only to control the nonlinear terms $(\mathcal{N}_{1} , \mathcal{N}_{2} , \mathcal{N}_{3} )$. Specifically, we establish the following three lemmas.
\begin{lemma}\label{le_1}
	Under the conditions of Theorem \ref{thm1.2}, there exists a constant $C$, depending only on $m$ and $ \varepsilon $, such that  $ \forall t>0$, there holds
	\begin{align}\label{N1}
		\mathcal{N}_{1} \le C\nu^{-\frac{1}{2}} \mathscr{E}^{\frac{1}{2}} \mathscr{D}_1^{\frac{1}{2}} \mathscr{D}_4^{\frac{1}{2}} + C\nu^{-\frac{1}{2}} \mathscr{E}^{\frac{1}{2}} \mathscr{D}_1^{\frac{1}{2}} \mathscr{D}_3^{\frac{1}{2}}+ C\nu^{-\frac{1}{2}} \mathscr{E}^{\frac{1}{2}} \mathscr{D}_1^{\frac{1}{4}} \mathscr{D}_3^{\frac{3}{4}}.
	\end{align}
	\begin{proof}
		First, we need to decompose the term $ \mathcal{N}_1$ into the following two components for estimation
		\begin{align}\label{N1_decom}
			\mathcal{N}_{1} &= 2 \int_{\R^2} e^{2c \lambda_k t} \langle k \rangle^{2m} \langle{k}^{-1} \rangle^{2\varepsilon}   \operatorname{Re} \langle \omega_k,  ( 1+ c_\tau \mathfrak{J}_k)  i\ell \phi_{\ell} \cdot \partial_{y} \omega_{k-\ell} \rangle    \,{dk} \,{d\ell}\nn\\
			& \quad -  2 \int_{\R^2} e^{2c \lambda_k t} \langle k \rangle^{2m} \langle{k}^{-1} \rangle^{2\varepsilon}   \operatorname{Re} \langle \omega_k,  ( 1+ c_\tau \mathfrak{J}_k)  \partial_{y} \phi_{\ell} \cdot i(k-\ell) \omega_{k-\ell} \rangle    \,{dk} \,{d\ell}\nn\\
			&\stackrel{\mathrm{def}}{=} \mathcal{N}_1^x + \mathcal{N}_1^y.
		\end{align}
		For the term $ \mathcal{N}_1^x$, we perform a detailed partitioning of the frequency space. When $ \frac{|k-\ell|}{2} \le |k| \le 2|k-\ell|$, applying Young's inequality, Lemmas \ref{le2.1} and \ref{pre1}, we have
		\begin{align}\label{n1x_klneq}
			 \left| \mathcal{N}_1^x \right| &= \left| 2 \int_{\R^2} \mathbf{1}_{\frac{|k-\ell|}{2} \le |k| \le 2|k-\ell|} e^{2c \lambda_k t} \langle k \rangle^{2m} \langle{k}^{-1} \rangle^{2\varepsilon}   \operatorname{Re} \langle \omega_k,  ( 1+ c_\tau \mathfrak{J}_k)  i\ell \phi_{\ell} \cdot \partial_{y} \omega_{k-\ell} \rangle    \,{dk} \,{d\ell} \right|\nn\\
			 & \lesssim \left\| e^{c \lambda_k t} \langle k \rangle^{m} \langle{k}^{-1} \rangle^{\varepsilon} \| \omega_k \|_{L^2_y}  \right\|_{L^2_k} \left\| |\ell| \| \phi_{\ell}\|_{L^\infty_y }\right\|_{L^1_\ell} \left\| e^{c \lambda_k t} \langle k \rangle^{m} \langle{k}^{-1} \rangle^{\varepsilon} \| \partial_{y} \omega_k \|_{L^2_y}  \right\|_{L^2_k}  \nn\\
			 &\lesssim \nu^{-\frac{1}{2}} \mathscr{E}^{\frac{1}{2}} \mathscr{D}_1^{\frac{1}{2}} \mathscr{D}_4^{\frac{1}{2}}.
		\end{align}
 When $2|k-\ell| \le |k| $, with $ |k| \approx |\ell|$ and $ |k-\ell|^{\frac{1}{2}} \lesssim |\ell|^{\frac{1}{2}}$, we similarly apply Lemmas \ref{le2.1} and \ref{pre1} to obtain
		\begin{align}\label{n1x_2}
			\left| \mathcal{N}_1^x \right| &= \left| 2 \int_{\R^2} \mathbf{1}_{2|k-\ell| \le |k|  } e^{2c \lambda_k t} \langle k \rangle^{2m} \langle{k}^{-1} \rangle^{2\varepsilon}   \operatorname{Re} \langle \omega_k,  ( 1+ c_\tau \mathfrak{J}_k)  i\ell \phi_{\ell} \cdot \partial_{y} \omega_{k-\ell} \rangle    \,{dk} \,{d\ell} \right|\nn\\
			& \lesssim \left\| e^{c \lambda_k t} \langle k \rangle^{m} \langle{k}^{-1} \rangle^{\varepsilon} \| \omega_k \|_{L^2_y}  \right\|_{L^2_k} \left\| |k|^{-\frac{1}{2}}\| \partial_{y} \omega_k \|_{L^2_y }\right\|_{L^1_k} \left\| e^{c \lambda_\ell t} \langle \ell \rangle^{m} \langle{\ell}^{-1} \rangle^{\varepsilon} |\ell|^{\frac{3}{2}} \| \phi_{\ell}  \|_{L^\infty_y}  \right\|_{L^2_\ell}  \nn\\
			&\lesssim \nu^{-\frac{1}{2}} \mathscr{E}^{\frac{1}{2}} \mathscr{D}_1^{\frac{1}{2}} \mathscr{D}_4^{\frac{1}{2}}.
		\end{align}
		When $ 2|k| \le |k-\ell|$, we have $ |\ell| \approx |k-\ell|$ and $ \lambda_k \lesssim \lambda_\ell$.  For $ \varepsilon \in (0, \frac{1}{12}) $, it follows that $ |\ell|^{-\frac{1}{2}} \lesssim \langle{\ell}^{-1} \rangle^{\varepsilon} \langle\frac{1}{k-\ell} \rangle^{\varepsilon} \langle{k}^{-1} \rangle^{\frac{1}{2} - 2\varepsilon}$. Furthermore, combining this with the fact that $ \langle k \rangle^{2m} \lesssim  \langle \ell \rangle^m \langle k-\ell \rangle^m$ and Lemma \ref{pre1}, we obtain
		\begin{align}\label{l=k-l}
		\left| \mathcal{N}_1^x \right| &= \left| 2 \int_{\R^2} \mathbf{1}_{2|k| \le |k-\ell|  } e^{2c \lambda_k t} \langle k \rangle^{2m} \langle{k}^{-1} \rangle^{2\varepsilon}   \operatorname{Re} \langle \omega_k,  ( 1+ c_\tau \mathfrak{J}_k)  i\ell\phi_{\ell} \cdot \partial_{y} \omega_{k-\ell} \rangle    \,{dk} \,{d\ell} \right|\nn\\
		& \lesssim \left\|   \langle{k}^{-1} \rangle^{\frac{1}{2}} \| \omega_k \|_{L^2_y}  \right\|_{L^1_k} \left\| e^{c \lambda_k t} \langle k \rangle^{m} \langle{k}^{-1} \rangle^{\varepsilon} \| \partial_{y} \omega_k \|_{L^2_y }\right\|_{L^2_k} \left\| e^{c \lambda_\ell t} \langle \ell \rangle^{m} \langle{\ell}^{-1} \rangle^{\varepsilon} |\ell|^{\frac{3}{2}} \| \phi_{\ell}  \|_{L^\infty_y}  \right\|_{L^2_\ell}  \nn\\
		&\lesssim \nu^{-\frac{1}{2}} \mathscr{E}^{\frac{1}{2}} \mathscr{D}_1^{\frac{1}{2}} \mathscr{D}_4^{\frac{1}{2}}.
		\end{align}
Next, we analyze the term $ \mathcal{N}_1^y $. We begin by decomposing the frequency space as follows:
		\begin{align}\label{1split}
			 \mathbf{1} = \mathbf{1}_{\frac{|k-\ell|}{2} \le |k| \le 2|k-\ell|} \left( \mathbf{1}_{|k-\ell|\ge \nu}  + \mathbf{1}_{|k-\ell|\le \nu} \right) + \mathbf{1}_{ 2|k-\ell| \le |k|} + \mathbf{1}_{ 2|k| \le |k-\ell|}.
		\end{align}	
We then estimate each term separately. For the first term in \eqref{1split}, since $ |k| \approx |k-\ell| $ and $ k \gtrsim \nu $, Young's inequality and Lemma \ref{pre1} imply
		\begin{align*}
		\left| \mathcal{N}_1^y \right| &= \left| 2 \int_{\R^2}\mathbf{1}_{\frac{\nu}{2}\le\frac{|k-\ell|}{2} \le |k| \le 2|k-\ell|}  e^{2c \lambda_k t} \langle k \rangle^{2m} \langle{k}^{-1} \rangle^{2\varepsilon}   \operatorname{Re} \langle \omega_k,  ( 1+ c_\tau \mathfrak{J}_k)  \partial_{y} \phi_{\ell} \cdot i(k-\ell) \omega_{k-\ell} \rangle    \,{dk} \,{d\ell} \right|\nn\\
		& \lesssim \left\| e^{c \lambda_k t} \langle k \rangle^{m} \langle{k}^{-1} \rangle^{\varepsilon} |k|^{\frac{1}{3}}\| \omega_k \|_{L^2_y}  \right\|_{L^2_{\{|k| \gtrsim \nu\}}} \left\|  \| \partial_{y} \phi_{\ell}\|_{L^\infty_y }\right\|_{L^1_\ell} \left\| e^{c \lambda_k t} \langle k \rangle^{m} \langle{k}^{-1} \rangle^{\varepsilon} |k|^{\frac{2}{3}}\|  \omega_k \|_{L^2_y}  \right\|_{L^2_{\{|k| \gtrsim \nu\}}}  \nn\\
		&\lesssim  \left(\left\| e^{c \lambda_k t} \langle k \rangle^{m} \langle{k}^{-1} \rangle^{\varepsilon} |k|^{\frac{1}{3}}\| \omega_k \|_{L^2_y}  \right\|_{L^2_{\{|k| \gtrsim \nu\}}} \right)^{\frac{3}{2}}\left(\left\| e^{c \lambda_k t} \langle k \rangle^{m} \langle{k}^{-1} \rangle^{\varepsilon} |k| \|  \omega_k \|_{L^2_y}  \right\|_{L^2_{\{|k| \gtrsim \nu\}}}   \right)^{\frac{1}{2}}\nn\\
&\qquad\times          \left\|  \| \partial_{y} \phi_{\ell}\|_{L^\infty_y }\right\|_{L^1_\ell} \nn\\
		&\lesssim \nu^{-\frac{1}{2}} \mathscr{E}^{\frac{1}{2}} \mathscr{D}_1^{\frac{1}{4}} \mathscr{D}_3^{\frac{3}{4}}.
		\end{align*}
		For the second term in \eqref{1split}, we have  $ \alpha(k) \approx 1$. Using a similar estimate as above, we obtain
		\begin{align*}
			\left| \mathcal{N}_1^y \right| &= \left| 2 \int_{\R^2}\mathbf{1}_{\frac{|k-\ell|}{2} \le |k| \le 2|k-\ell|\le2\nu}   e^{2c \lambda_k t} \langle k \rangle^{2m} \langle{k}^{-1} \rangle^{2\varepsilon}   \operatorname{Re} \langle \omega_k,  ( 1+ c_\tau \mathfrak{J}_k)  \partial_{y} \phi_{\ell} \cdot i(k-\ell) \omega_{k-\ell} \rangle    \,{dk} \,{d\ell} \right|\nn\\
			& \lesssim \nu^{\frac{1}{2}} \left\| e^{c \lambda_k t} \langle k \rangle^{m} \langle{k}^{-1} \rangle^{\varepsilon} \| \omega_k \|_{L^2_y}  \right\|_{L^2_{\{|k| \lesssim \nu\}}} \left\|  \| \partial_{y} \phi_{\ell}\|_{L^\infty_y }\right\|_{L^1_\ell} \left\| e^{c \lambda_k t} \langle k \rangle^{m} \langle{k}^{-1} \rangle^{\varepsilon} |k|^{\frac{1}{2}}\|  \omega_k \|_{L^2_y}  \right\|_{L^2_{\{|k| \lesssim \nu\}}}  \nn\\
			&\lesssim \nu^{-\frac{1}{2}} \left(\nu^{\frac{1}{2}}\left\| e^{c \lambda_k t} \langle k \rangle^{m} \langle{k}^{-1} \rangle^{\varepsilon} \| \omega_k \|_{L^2_y}  \right\|_{L^2_{\{|k| \lesssim \nu\}}}\right)^{\frac{3}{2}} \left\|  \| \partial_{y} \phi_{\ell}\|_{L^\infty_y }\right\|_{L^1_\ell} \nn\\
&\qquad\times\left( \nu^{\frac{1}{2}}\left\| e^{c \lambda_k t} \langle k \rangle^{m} \langle{k}^{-1} \rangle^{\varepsilon} |k|\|  \omega_k \|_{L^2_y}  \right\|_{L^2_{\{|k| \lesssim \nu\}}} \right)^{\frac{1}{2}} \nn\\
			&\lesssim \nu^{-\frac{1}{2}} \mathscr{E}^{\frac{1}{2}} \mathscr{D}_1^{\frac{1}{4}} \mathscr{D}_3^{\frac{3}{4}}.
		\end{align*}
Now consider the third term in \eqref{1split}. The condition $ 2|k-\ell| \le |k|$ implies $ |k| \approx |\ell|$, and hence by Lemma \ref{pre1},
		\begin{align*}
			\left| \mathcal{N}_1^y \right| &= \left| 2 \int_{\R^2} \mathbf{1}_{2|k-\ell| \le |k| }   e^{2c \lambda_k t} \langle k \rangle^{2m} \langle{k}^{-1} \rangle^{2\varepsilon}   \operatorname{Re} \langle \omega_k,  ( 1+ c_\tau \mathfrak{J}_k)  \partial_{y} \phi_{\ell} \cdot i(k-\ell) \omega_{k-\ell} \rangle    \,{dk} \,{d\ell} \right|\nn\\
			&\lesssim \left\| e^{c \lambda_k t} \langle k \rangle^{m} \langle{k}^{-1} \rangle^{\varepsilon} \| \omega_k \|_{L^2_y}  \right\|_{L^2_k}  \left\| e^{c \lambda_\ell t} \langle \ell \rangle^{m} \langle{\ell}^{-1} \rangle^{\varepsilon} |\ell| \| \partial_{y} \phi_{\ell}  \|_{L^2_y}  \right\|_{L^2_\ell} \left\| \|  \omega_k \|_{L^\infty_y }\right\|_{L^1_k} \nn\\
			&\lesssim \nu^{-\frac{1}{2}} \mathscr{E}^{\frac{1}{2}} \mathscr{D}_1^{\frac{1}{2}} \mathscr{D}_4^{\frac{1}{2}}.
		\end{align*}
		Finally, for the last term in \eqref{1split}, we proceed analogously to the treatment of \eqref{l=k-l}. Applying Lemma \ref{pre1} yields
		\begin{align*}
			\left| \mathcal{N}_1^y \right| &= \left| 2 \int_{\R^2} \mathbf{1}_{2|k| \le |k-\ell| }   e^{2c \lambda_k t} \langle k \rangle^{2m} \langle{k}^{-1} \rangle^{2\varepsilon}   \operatorname{Re} \langle \omega_k,  ( 1+ c_\tau \mathfrak{J}_k)  \partial_{y} \phi_{\ell} \cdot i(k-\ell) \omega_{k-\ell} \rangle    \,{dk} \,{d\ell} \right|\nn\\
			&\lesssim \left\|   \langle{k}^{-1} \rangle^{\frac{1}{2}} \| \omega_k \|_{L^2_y}  \right\|_{L^1_k}  \left\| e^{c \lambda_k t} \langle k \rangle^{m} \langle{k}^{-1} \rangle^{\varepsilon} |k|^{\frac{1}{2}} \| \omega_k \|_{L^\infty_y }\right\|_{L^2_k}  \left\| e^{c \lambda_\ell t} \langle \ell \rangle^{m} \langle{\ell}^{-1} \rangle^{\varepsilon} |\ell| \| \partial_{y} \phi_{\ell}  \|_{L^2_y}  \right\|_{L^2_\ell}  \nn\\
			&\lesssim \nu^{-\frac{1}{2}} \mathscr{E}^{\frac{1}{2}} \mathscr{D}_1^{\frac{1}{2}} \mathscr{D}_4^{\frac{1}{2}}.
		\end{align*}
		Combining the estimates for $  \mathcal{N}_1^x$ and $  \mathcal{N}_1^y $ above, we directly obtain \eqref{N1}, thereby completing the proof of this lemma.
	\end{proof}
\end{lemma}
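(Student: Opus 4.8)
The plan is to view $\mathcal{N}_1$ as a trilinear form and close it by a paraproduct decomposition in the frequency variables $k$ and $\ell$. First I would write out the transport nonlinearity in the $x$-Fourier variable: since $\u=\nabla^\perp\phi$, the term $\mathcal{N}_{\omega,k}=-(\nabla^\top\phi\cdot\nabla\omega)_k$ produces two convolution pieces, one with the $x$-derivative falling on the stream function and one with it falling on the vorticity, yielding the split $\mathcal{N}_1=\mathcal{N}_1^x+\mathcal{N}_1^y$ recorded in \eqref{N1_decom}. The singular operator $1+c_\tau\mathfrak{J}_k$ sits directly against $\omega_k$ and acts only as a bounded $L^2_y$ multiplier by Lemma \ref{le2.1}, so it can be absorbed into a constant; no commutator with $\partial_y$ enters here, in contrast with the $\mathcal{N}_2$ term.

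Next I would run a paraproduct decomposition according to the relative sizes of $|k|$, $|\ell|$, and $|k-\ell|$ through the trichotomy $\tfrac{|k-\ell|}{2}\le|k|\le2|k-\ell|$, $2|k-\ell|\le|k|$, and $2|k|\le|k-\ell|$. In each region exactly one input is the high frequency, which dictates where to spend the one-dimensional Gagliardo--Nirenberg inequality $\|f\|_{L^\infty_y}\lesssim|\ell|^{-1/2}\|\nabla_\ell f\|_{L^2_y}$: the policy is to place $\phi_\ell$ (or $\partial_y\phi_\ell$) in $L^\infty_y$ and $L^1_\ell$, keep the two vorticity factors in weighted $L^2$, and then read off each resulting factor as one of $\mathscr{E}^{1/2}$, $\mathscr{D}_1^{1/2}$, $\mathscr{D}_3^{1/2}$, $\mathscr{D}_4^{1/2}$ by invoking Lemma \ref{pre1}. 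For $\mathcal{N}_1^y$ I anticipate needing the finer partition \eqref{1split}, which subdivides the comparable-frequency region into $|k-\ell|\ge\nu$ and $|k-\ell|\le\nu$; this is forced by the change of form of the weights $\alpha(k)$ and $\lambda_k$ across $|k|=\nu$, and in the genuinely low-frequency piece the relations $\alpha\approx1$ and $\lambda_k=\nu$, together with the interpolation estimates of Lemma \ref{pre2}, furnish the extra powers of $\nu^{1/2}$ that turn the pairing into the product $\nu^{-1/2}\mathscr{E}^{1/2}\mathscr{D}_1^{1/4}\mathscr{D}_3^{3/4}$.

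I expect the genuine difficulty to be the region $2|k|\le|k-\ell|$, where the output frequency $k$ may approach zero and the weight $\langle k^{-1}\rangle^{2\varepsilon}$ degenerates. There one has $|\ell|\approx|k-\ell|$ and $\lambda_k\lesssim\lambda_\ell$, but the Gagliardo--Nirenberg bound on $\phi_\ell$ introduces a factor $|\ell|^{-1/2}$ that must be redistributed across the low-frequency weights. The crucial elementary inequality is
\[
|\ell|^{-\frac12}\lesssim\langle\ell^{-1}\rangle^{\varepsilon}\,\langle(k-\ell)^{-1}\rangle^{\varepsilon}\,\langle k^{-1}\rangle^{\frac12-2\varepsilon},
\]
which holds precisely for $\varepsilon\in(0,\tfrac1{12})$ and spreads the negative power of $|\ell|$ over the three frequency weights so that each input is matched against its own norm. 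Combining this with the Leibniz-type bound $\langle k\rangle^{2m}\lesssim\langle\ell\rangle^m\langle k-\ell\rangle^m$ and with Lemma \ref{pre1} closes the low-frequency interaction with no logarithmic loss, which is exactly where the improvement over \cite{arbon2024} originates. Assembling the contributions of all frequency regions for both $\mathcal{N}_1^x$ and $\mathcal{N}_1^y$ then produces the three terms on the right-hand side of \eqref{N1}.
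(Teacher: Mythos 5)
Your proposal follows essentially the same route as the paper: the same split $\mathcal{N}_1=\mathcal{N}_1^x+\mathcal{N}_1^y$, the same frequency trichotomy refined by the $|k-\ell|\gtrless\nu$ subdivision for $\mathcal{N}_1^y$, the same policy of placing the stream function in $L^\infty_y L^1_\ell$ via Gagliardo--Nirenberg, and the same key redistribution inequality $|\ell|^{-1/2}\lesssim\langle\ell^{-1}\rangle^{\varepsilon}\langle(k-\ell)^{-1}\rangle^{\varepsilon}\langle k^{-1}\rangle^{1/2-2\varepsilon}$ in the region $2|k|\le|k-\ell|$. The only small inaccuracies are that the paper closes the $\mathscr{D}_1^{1/4}\mathscr{D}_3^{3/4}$ term by a direct H\"older interpolation between the $|k|$ and $|k|^{1/3}$, $|k|^{2/3}$ weighted norms in \emph{both} subcases of the comparable-frequency region (not via Lemma \ref{pre2}, which is not invoked in this lemma), but this does not affect the validity of the argument.
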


\begin{lemma}\label{le_2}
	Under the conditions of Theorem \ref{thm1.2}, there exists a constant $C$, depending only on $m$ and $ \varepsilon $, such that  $ \forall t>0$, there holds
	\begin{align}\label{N2}
	\mathcal{N}_{2} \le C\nu^{-\frac{1}{2}}    \mathscr{E}^{\frac{1}{2} }  \mathscr{D}_2^{\frac{1}{2}}  \mathscr{D}_4^{\frac{1}{2}} + C\nu^{-\frac{1}{2}} \mathscr{E}^{\frac{1}{2} } \mathscr{D}_2^{\frac{1}{2}} \mathscr{D}_1^{\frac{1}{4}} \mathscr{D}_3^{\frac{1}{4}}+ C\nu^{-\frac{1}{2}} \mathscr{E}^{\frac{1}{2} } \mathscr{D}_2^{\frac{1}{2}} \mathscr{D}_5^{\frac{1}{4}}.
	\end{align}
	\begin{proof}
		First, analogous to the decomposition in \eqref{N1_decom}, we express the velocity in terms of the stream function, yielding the following decomposition
		\begin{align*}
		\mathcal{N}_{2} &= -2c_{\alpha} \int_{\R^2} e^{2c \lambda_k t} \alpha(k) \langle k \rangle^{2m} \langle{k}^{-1} \rangle^{2\varepsilon}   \operatorname{Re} \langle \partial_{y} ( 1+ c_\tau \mathfrak{J}_k) \partial_{y} \omega_k,    \left(i\ell\phi_{\ell} \cdot \partial_{y} \omega_{k-\ell}\right) \rangle    \,{dk} \,{d\ell}\nn\\
		& \quad +  2 c_{\alpha} \int_{\R^2} e^{2c \lambda_k t} \alpha(k) \langle k \rangle^{2m} \langle{k}^{-1} \rangle^{2\varepsilon}   \operatorname{Re} \langle \partial_{y} ( 1+ c_\tau \mathfrak{J}_k) \partial_{y} \omega_k,  \left(  \partial_{y} \phi_{\ell} \cdot i(k-\ell) \omega_{k-\ell} \right) \rangle    \,{dk} \,{d\ell}\nn\\
		&\stackrel{\mathrm{def}}{=} \mathcal{N}_2^x + \mathcal{N}_2^y.
		\end{align*}
		We first estimate the $ \mathcal{N}_2^x$ term in the above expression by performing the following decomposition of the frequency space
		\begin{align}\label{2_split}
			\mathbf{1} = \mathbf{1}_{\frac{|k-\ell|}{2} \le |k| \le 2|k-\ell|}  + \mathbf{1}_{ 2|k-\ell| \le |k|} \cdot \left( \mathbf{1}_{|k-\ell|\ge \nu}  + \mathbf{1}_{|k-\ell|\le \nu} \mathbf{1}_{|k |\ge \nu} + \mathbf{1}_{|k-\ell|\le \nu} \mathbf{1}_{|k |\le \nu}\right) + \mathbf{1}_{ 2|k| \le |k-\ell|}.
		\end{align}
		For the first term in the above expression, it follows directly from Young's inequality and Lemma \ref{pre1} that
		\begin{align*}
			\left|\mathcal{N}^x_{2} \right| &= \left| 2c_{\alpha}\! \! \int_{\R^2}\mathbf{1}_{\frac{|k-\ell|}{2} \le |k| \le 2|k-\ell|} e^{2c \lambda_k t} \alpha(k) \langle k \rangle^{2m} \langle{k}^{-1} \rangle^{2\varepsilon}   \operatorname{Re} \langle \partial_{y} ( 1+ c_\tau \mathfrak{J}_k) \partial_{y} \omega_k,    \left(i\ell\phi_{\ell} \cdot \partial_{y} \omega_{k-\ell}\right) \rangle    \,{dk} \,{d\ell}\right|\nn\\
			&\lesssim \left\|  e^{c \lambda_k t} \sqrt{\alpha(k)} \langle k \rangle^{m} \langle{k}^{-1} \rangle^{\varepsilon} \| \nabla_k \partial_{y} \omega_k\|_{L^2_y} \right\|_{L^2_k} \left\|  e^{c \lambda_k t} \sqrt{\alpha(k)} \langle k \rangle^{m} \langle{k}^{-1} \rangle^{\varepsilon} \|  \partial_{y} \omega_k\|_{L^2_y} \right\|_{L^2_k}\nn\\
  &\qquad\times\left\| |\ell| \| \phi_\ell\|_{L^\infty_y} \right\|_{L^1_\ell}\nn\\
			&\lesssim \nu^{-\frac{1}{2}} \mathscr{E}^{\frac{1}{2} } \mathscr{D}_2^{\frac{1}{2}} \mathscr{D}_4^{\frac{1}{2}}.
		\end{align*}
		For the second term above, where $ \alpha(k) =  \nu^{\frac{2}{3}} |k|^{-\frac{2}{3}} \lesssim \nu^{\frac{1}{3}} |k|^{-\frac{1}{3}} \cdot \nu^{\frac{1}{3}} |k-\ell|^{-\frac{1}{3}}$, $ |k| \approx |\ell|$, and $ |\ell|^{-\frac{1}{2}} \lesssim |k-\ell|^{-\frac{1}{2}}$, we immediately obtain from Lemma \ref{pre1}
			\begin{align*}
		\left|\mathcal{N}^x_{2} \right| &= \left| 2c_{\alpha} \int_{\R^2}\mathbf{1}_{2\nu\le2|k-\ell| \le |k| }   e^{2c \lambda_k t} \alpha(k) \langle k \rangle^{2m} \langle{k}^{-1} \rangle^{2\varepsilon}   \operatorname{Re} \langle \partial_{y} ( 1+ c_\tau \mathfrak{J}_k) \partial_{y} \omega_k,    \left(i\ell\phi_{\ell} \cdot \partial_{y} \omega_{k-\ell}\right) \rangle    \,{dk} \,{d\ell}\right|\nn\\
		&\lesssim  \int_{2\nu\le2|k-\ell| \le |k| }   e^{2c \lambda_k t}   \langle k \rangle^{2m} \langle{k}^{-1} \rangle^{2\varepsilon}   \nu^{\frac{2}{3}} |k|^{-\frac{2}{3}} \| \nabla_k \partial_{y} \omega_k \|_{L^2} |\ell|^{\frac{3}{2}} \|\phi_\ell \|_{L^\infty}  |k-\ell|^{-\frac{1}{2}} \| \partial_{y} \omega_{k-\ell}\|_{L^2} \,{dk} \,{d\ell}\nn\\
		&\lesssim \left\|  e^{c \lambda_k t}  \nu^{\frac{1}{3}} |k|^{-\frac{1}{3}}   \langle k \rangle^{m} \langle{k}^{-1} \rangle^{\varepsilon} \| \nabla_k \partial_{y} \omega_k\|_{L^2_y} \right\|_{L^2_{\{|k| \gtrsim \nu\}}} \left\|  \nu^{\frac{1}{3}} |k-\ell|^{-\frac{1}{3}} |k-\ell|^{-\frac{1}{2}}  \|  \partial_{y} \omega_{k-\ell}\|_{L^2_y} \right\|_{L^1_{\{|k-\ell| \gtrsim \nu\}}} \nn\\
		&\quad \quad \times \left\| e^{c \lambda_\ell t} \langle \ell \rangle^{m} \langle{\ell}^{-1} \rangle^{\varepsilon} |\ell|^{\frac{3}{2}} \| \phi_\ell\|_{L^\infty_y} \right\|_{L^2_\ell} \nn\\
		&\lesssim \nu^{-\frac{1}{2}} \mathscr{E}^{\frac{1}{2} } \mathscr{D}_2^{\frac{1}{2}} \mathscr{D}_4^{\frac{1}{2}},
		\end{align*}
	 where we have utilized the inequality $ \|\langle  k \rangle^{-m} \langle{k}^{-1} \rangle^{- \varepsilon} |k|^{-\frac{1}{2}} \|_{L^2_k} \lesssim 1$.

Let $\Omega_1=\left\{(k,\ell)\in\R^2\  \big|\ 2|k-\ell| \le |k|,\quad|k-\ell|\le \nu,\quad |k| \ge \nu \right\}$.
	For the third term in \eqref{2_split}, where $ \alpha(k) =  \nu^{\frac{2}{3}} |k|^{-\frac{2}{3}}   \lesssim \nu^{\frac{1}{3}} |k|^{-\frac{1}{3}} \cdot 1$, we proceed similarly to the treatment of the previous expression to obtain with
		\begin{align*}
			\left|\mathcal{N}^x_{2} \right| &= \left| 2c_{\alpha} \int_{\Omega_1}   e^{2c \lambda_k t} \alpha(k) \langle k \rangle^{2m} \langle{k}^{-1} \rangle^{2\varepsilon}   \operatorname{Re} \langle \partial_{y} ( 1+ c_\tau \mathfrak{J}_k) \partial_{y} \omega_k,    \left(i\ell\phi_{\ell} \cdot \partial_{y} \omega_{k-\ell}\right) \rangle    \,{dk} \,{d\ell}\right|\nn\\
			&\lesssim \int_{\Omega_1}   e^{2c \lambda_k t}   \langle k \rangle^{2m} \langle{k}^{-1} \rangle^{2\varepsilon}   \nu^{\frac{1}{3}} |k|^{-\frac{1}{3}} \| \nabla_k \partial_{y} \omega_k \|_{L^2} |\ell|^{\frac{3}{2}} \|\phi_\ell \|_{L^\infty}  |k-\ell|^{-\frac{1}{2}} \| \partial_{y} \omega_{k-\ell}\|_{L^2} \,{dk} \,{d\ell}\nn\\
			&\lesssim \left\|  e^{c \lambda_k t}  \nu^{\frac{1}{3}} |k|^{-\frac{1}{3}}   \langle k \rangle^{m} \langle{k}^{-1} \rangle^{\varepsilon} \| \nabla_k \partial_{y} \omega_k\|_{L^2_y} \right\|_{L^2_{\{|k| \gtrsim \nu\}}} \left\|   |k-\ell|^{-\frac{1}{2}}  \|  \partial_{y} \omega_{k-\ell}\|_{L^2_y} \right\|_{L^1_{\{|k-\ell| \le \nu\}}} \nn\\
			&\quad \quad \times \left\| e^{c \lambda_\ell t} \langle \ell \rangle^{m} \langle{\ell}^{-1} \rangle^{\varepsilon} |\ell|^{\frac{3}{2}} \| \phi_\ell\|_{L^\infty_y} \right\|_{L^2_\ell} \nn\\
			&\lesssim \nu^{-\frac{1}{2}} \mathscr{E}^{\frac{1}{2} } \mathscr{D}_2^{\frac{1}{2}} \mathscr{D}_4^{\frac{1}{2}}.
		\end{align*}
		For the fourth term in \eqref{2_split}, where  $ \alpha(k) =1$, utilizing the equivalence of $|k|$ and $|\ell|$, it follows readily that$\Omega_1=\left\{(k,\ell)\in\R^2\  \big|\ 2|k-\ell| \le |k|\le\nu,|k-\ell|\le \nu,\quad |k| \ge \nu \right\}$.
		\begin{align*}
		\left|\mathcal{N}^x_{2} \right| &= \Big| 2c_{\alpha} \int_{\R^2}\mathbf{1}_{ 2|k-\ell| \le |k|\le\nu} \mathbf{1}_{|k-\ell|\le \nu}   e^{2c \lambda_k t} \alpha(k) \langle k \rangle^{2m} \langle{k}^{-1} \rangle^{2\varepsilon}  \nn\\
 &\qquad\qquad\times\operatorname{Re} \langle \partial_{y} ( 1+ c_\tau \mathfrak{J}_k) \partial_{y} \omega_k,    \left(i\ell\phi_{\ell} \cdot \partial_{y} \omega_{k-\ell}\right) \rangle    \,{dk} \,{d\ell}\Big|\nn\\
		&\lesssim \left\|  e^{c \lambda_k t}     \langle k \rangle^{m} \langle{k}^{-1} \rangle^{\varepsilon} \| \nabla_k \partial_{y} \omega_k\|_{L^2_y} \right\|_{L^2_{\{|k| \le \nu\}}} \left\|   |k-\ell|^{-\frac{1}{2}}  \|  \partial_{y} \omega_{k-\ell}\|_{L^2_y} \right\|_{L^1_{\{|k-\ell| \le \nu\}}} \nn\\
		&\quad \quad \times\left\| e^{c \lambda_\ell t} \langle \ell \rangle^{m} \langle{\ell}^{-1} \rangle^{\varepsilon} |\ell|^{\frac{3}{2}} \| \phi_\ell\|_{L^\infty_y} \right\|_{L^2_\ell} \nn\\
		&\lesssim \nu^{-\frac{1}{2}} \mathscr{E}^{\frac{1}{2} } \mathscr{D}_2^{\frac{1}{2}} \mathscr{D}_4^{\frac{1}{2}}.
		\end{align*}
		Next, we address the most challenging case where $  2|k| \le |k-\ell|$. We further subdivide this into three frequency intervals, specifically as follows
		\begin{align}\label{split3}
			\mathbf{1}_{ 2|k| \le |k-\ell|} = \mathbf{1}_{ 2|k| \le |k-\ell|} \mathbf{1}_{ |k| \ge \nu} + \mathbf{1}_{ 2|k| \le |k-\ell|} \mathbf{1}_{ |k| \le \nu} \left( \mathbf{1}_{ |k-\ell| \le \nu}  + \mathbf{1}_{ |k-\ell| \ge \nu} \right).
		\end{align}
		For the first term in \eqref{split3}, in the case where $ |\ell| \approx |k-\ell|$ and $ |\ell|^{-\frac{1}{2}} \lesssim |k-\ell|^{-\frac{1}{3}} |\ell|^{-\frac{1}{6}} \lesssim   |k-\ell|^{-\frac{1}{3}} \langle{\ell}^{-1} \rangle^{\varepsilon} \langle\frac{1}{k-\ell} \rangle^{\varepsilon} \langle{k}^{-1} \rangle^{\frac{1}{6} - 2\varepsilon}$, it follows from Lemma \ref{pre1} that
		\begin{align*}
			\left|\mathcal{N}^x_{2} \right| &= \left| 2c_{\alpha} \int_{\R^2}\mathbf{1}_{2\nu\le2|k| \le |k-\ell| }   e^{2c \lambda_k t} \alpha(k) \langle k \rangle^{2m} \langle{k}^{-1} \rangle^{2\varepsilon}   \operatorname{Re} \langle \partial_{y} ( 1+ c_\tau \mathfrak{J}_k) \partial_{y} \omega_k,    \left(i\ell\phi_{\ell} \cdot \partial_{y} \omega_{k-\ell}\right) \rangle    \,{dk} \,{d\ell}\right|\nn\\
			&\lesssim   \int_{\R^2}\mathbf{1}_{2\nu\le2|k| \le |k-\ell| }      \langle{k}^{-1} \rangle^{\frac{1}{6}} \nu^{\frac{1}{3}} |k|^{-\frac{2}{3}} \| \nabla_k \partial_{y} \omega_k \|_{L^2} \cdot  e^{c \lambda_\ell t} \langle \ell \rangle^{m} \langle{\ell}^{-1} \rangle^{\varepsilon} |\ell|^{\frac{3}{2}} \|\phi_\ell \|_{L^\infty}\nn\\
			&\quad \quad   \cdot e^{c \lambda_{k-\ell} t}  \nu^{\frac{1}{3}} |k-\ell|^{-\frac{1}{3}} \langle k-\ell \rangle^{m} \langle \frac{1}{k-\ell} \rangle^{\varepsilon}  \| \partial_{y} \omega_k\|_{L^2} \,{dk} \,{d\ell}\nn\\
			&\lesssim \left\|  e^{c \lambda_k t}  \nu^{\frac{1}{3}} |k|^{-\frac{2}{3}}    \langle{k}^{-1} \rangle^{\frac{1}{6}} \| \nabla_k \partial_{y} \omega_k\|_{L^2_y} \right\|_{L^1_{\{|k| \ge \nu\}}} \left\|  \nu^{\frac{1}{3}} |k|^{-\frac{1}{3}} \langle k \rangle^{m} \langle{k}^{-1} \rangle^{\varepsilon}  \|  \partial_{y} \omega_k\|_{L^2_y} \right\|_{L^2_{\{|k| \ge \nu\}}} \nn\\
			&\quad \quad \cdot \left\| e^{c \lambda_\ell t} \langle \ell \rangle^{m} \langle{\ell}^{-1} \rangle^{\varepsilon} |\ell|^{\frac{3}{2}} \| \phi_\ell\|_{L^\infty_y} \right\|_{L^2_\ell} \nn\\
			&\lesssim \nu^{-\frac{1}{2}} \mathscr{E}^{\frac{1}{2} } \mathscr{D}_2^{\frac{1}{2}} \mathscr{D}_4^{\frac{1}{2}},
		\end{align*}
		where in the last line we have utilized the fact that  $ \left\| |k|^{-\frac{1}{3}}  \langle k \rangle^{-m} \langle{k}^{-1} \rangle^{\frac{1}{6}-\varepsilon}\right\|_{L^2_{\{|k| \ge \nu \}}}  \lesssim 1.$
		
Let $\Omega_2=\left\{(k,\ell)\in\R^2\  \big|\ 2|k| \le |k-\ell|,\quad |k|\le \nu,\quad |k-\ell| \le \nu\right\}.$
For the second term in \eqref{split3}, where $ \alpha(k) = 1$, using the fact that  	$$ |\ell|^{-\frac{1}{2}} \lesssim \langle{\ell}^{-1} \rangle^{\varepsilon} \langle\frac{1}{k-\ell} \rangle^{\varepsilon} \langle{k}^{-1} \rangle^{\frac{1}{2} - 2\varepsilon} \quad \text{and} \quad  \|   \langle k \rangle^{-m} \langle{k}^{-1} \rangle^{\frac{1}{2}-\varepsilon}\|_{L^2_{|k| \le \nu }}  \lesssim 1,$$  we have
	\begin{align*}
	\left|\mathcal{N}^x_{2} \right| &= \left| 2c_{\alpha} \int_{\Omega_2}   e^{2c \lambda_k t} \alpha(k) \langle k \rangle^{2m} \langle{k}^{-1} \rangle^{2\varepsilon}   \operatorname{Re} \langle \partial_{y} ( 1+ c_\tau \mathfrak{J}_k) \partial_{y} \omega_k,    \left(i\ell\phi_{\ell} \cdot \partial_{y} \omega_{k-\ell}\right) \rangle    \,{dk} \,{d\ell}\right|\nn\\
	&\lesssim   \int_{\Omega_2}        \langle{k}^{-1} \rangle^{\frac{1}{2}} \| \nabla_k \partial_{y} \omega_k \|_{L^2} \cdot  e^{c \lambda_\ell t} \langle \ell \rangle^{m} \langle{\ell}^{-1} \rangle^{\varepsilon} |\ell|^{\frac{3}{2}} \|\phi_\ell \|_{L^\infty}\nn\\
	&\quad \quad   \cdot e^{c \lambda_{k-\ell} t}   \langle k-\ell \rangle^{m} \langle \frac{1}{k-\ell} \rangle^{\varepsilon}  \| \partial_{y} \omega_k\|_{L^2} \,{dk} \,{d\ell}\nn\\
	&\lesssim \left\|  e^{c \lambda_k t}      \langle{k}^{-1} \rangle^{\frac{1}{2}} \| \nabla_k \partial_{y} \omega_k\|_{L^2_y} \right\|_{L^1_{\{|k| \le \nu\}}} \left\|   \langle k \rangle^{m} \langle{k}^{-1} \rangle^{\varepsilon}  \|  \partial_{y} \omega_k\|_{L^2_y} \right\|_{L^2_{\{|k| \le \nu\}}} \nn\\
 &\qquad\times\left\| e^{c \lambda_\ell t} \langle \ell \rangle^{m} \langle{\ell}^{-1} \rangle^{\varepsilon} |\ell|^{\frac{3}{2}} \| \phi_\ell\|_{L^\infty_y} \right\|_{L^2_\ell} \nn\\
	&\lesssim \nu^{-\frac{1}{2}} \mathscr{E}^{\frac{1}{2} } \mathscr{D}_2^{\frac{1}{2}} \mathscr{D}_4^{\frac{1}{2}}.
	\end{align*}
	Additionally, let $\Omega_3=\left\{(k,\ell)\in\R^2\  \big|\ 2|k| \le |k-\ell|,\quad|k|\le \nu,\quad |k-\ell| \ge \nu\right\}$,
 for the last term in \eqref{split3}, given the facts that $ 1 \lesssim \nu^{-\frac{1}{3}} |\ell|^{\frac{1}{3}} \cdot \nu^{\frac{1}{3}} |k-\ell|^{-\frac{1}{3}}$ and $  |\ell|^{-\frac{1}{6}} \lesssim \langle{\ell}^{-1} \rangle^{\varepsilon} \langle\frac{1}{k-\ell} \rangle^{\varepsilon} \langle{k}^{-1} \rangle^{\frac{1}{6} - 2\varepsilon}$, we apply Lemma \ref{pre1}  to obtain

	\begin{align}\label{N2x_last}
		\left|\mathcal{N}^x_{2} \right| &= \left| 2c_{\alpha} \int_{\Omega_3}   e^{2c \lambda_k t}  \langle k \rangle^{2m} \langle{k}^{-1} \rangle^{2\varepsilon}   \operatorname{Re} \langle \partial_{y} ( 1+ c_\tau \mathfrak{J}_k) \partial_{y} \omega_k,    \left(i\ell\phi_{\ell} \cdot \partial_{y} \omega_{k-\ell}\right) \rangle    \,{dk} \,{d\ell}\right|\nn\\
		&\lesssim   \int_{\Omega_3}        \langle{k}^{-1} \rangle^{\frac{1}{6}} \| \nabla_k \partial_{y} \omega_k \|_{L^2} \cdot  e^{c \lambda_\ell t} \langle \ell \rangle^{m} \langle{\ell}^{-1} \rangle^{\varepsilon} \nu^{-\frac{1}{3}} |\ell|^{\frac{1}{2}} |\ell| \|\phi_\ell \|_{L^\infty}\nn\\
		&\quad \quad   \cdot  e^{c \lambda_{k-\ell} t} \nu^{\frac{1}{3}} |k-\ell|^{-\frac{1}{3}}  \langle k-\ell \rangle^{m} \langle \frac{1}{k-\ell} \rangle^{\varepsilon}  \| \partial_{y} \omega_k\|_{L^2} \,{dk} \,{d\ell}\nn\\
		&\lesssim \nu^{-\frac{1}{3}}  \left\| \langle{k}^{-1} \rangle^{\frac{1}{6}} \| \nabla_k \partial_{y} \omega_k \|_{L^2}\right\|_{L^1_{\{|k| \le \nu\}}} \mathscr{E}^{\frac{1}{2} }  \mathscr{D}_4^{\frac{1}{2}} \nn\\
		&\lesssim \nu^{-\frac{1}{2}+\varepsilon}    \mathscr{E}^{\frac{1}{2} }  \mathscr{D}_2^{\frac{1}{2}}  \mathscr{D}_4^{\frac{1}{2}}
	\end{align}
	 where we have utilized the basic inequality
	\begin{align*}
		\left\| \langle{k}^{-1} \rangle^{\frac{1}{6}} \| \nabla_k \partial_{y} \omega_k \|_{L^2}\right\|_{L^1_{\{|k| \le \nu\}}} \lesssim \nu^{-\frac{1}{2}} \mathscr{D}_2^{\frac{1}{2}} \cdot \| \langle k \rangle^{-m}\|_{L^4_{\{|k| \le \nu\}}}  \| \langle{k}^{-1} \rangle^{\frac{1}{6} - \varepsilon} \|_{L^4_{\{|k| \le \nu\}}}  \lesssim \nu^{-\frac{1}{6} + \varepsilon} \mathscr{D}_2^{\frac{1}{2}}.
	\end{align*}
	Combining the estimates from the decompositions in \eqref{2_split} and \eqref{split3} above, we obtain
	\begin{align}\label{N2x_estimate}
		\left|\mathcal{N}^x_{2} \right| \lesssim \nu^{-\frac{1}{2}}    \mathscr{E}^{\frac{1}{2} }  \mathscr{D}_2^{\frac{1}{2}}  \mathscr{D}_4^{\frac{1}{2}}.
	\end{align}
	Next, we proceed to estimate the term $ \mathcal{N}^y_{2}$. We begin by decomposing the frequency space as follows
	\begin{align*}
		\mathbf{1} = \mathbf{1}_{\frac{|\ell|}{2} \le |k| \le 2|\ell|} \left( \mathbf{1}_{\{|\ell| \ge \nu\}}  + \mathbf{1}_{|\ell|\le \nu} \right) + \mathbf{1}_{ 2|\ell| \le |k|} + \mathbf{1}_{ 2|k| \le |\ell|}.
	\end{align*}
	For the first case above, let $\Omega_4=\left\{(k,\ell)\in\R^2\  \big|\ {|\ell|}/{2} \le |k| \le 2|\ell| ,\quad|\ell| \ge \nu\right\}$,
 based on $ |k| \approx |\ell|$, $ |k| \gtrsim \nu$, $ 1 = |\ell|^{\frac{3}{2}} |\ell|^{-\frac{3}{2}} \lesssim |\ell|^{\frac{3}{2}} |k-\ell|^{-\frac{3}{2}}$, it follows from Lemma \ref{pre1} that
	\begin{align*}
			\left|\mathcal{N}^y_{2} \right| &= \left| 2c_{\alpha} \int_{\Omega_4 }   e^{2c \lambda_k t} \alpha(k) \langle k \rangle^{2m} \langle{k}^{-1} \rangle^{2\varepsilon}   \operatorname{Re} \langle \partial_{y} ( 1+ c_\tau \mathfrak{J}_k) \partial_{y} \omega_k,    \left(\partial_{y}  \phi_{\ell} \cdot i(k-\ell)  \omega_{k-\ell}\right) \rangle    \,{dk} \,{d\ell}\right|\nn\\
			&\lesssim \int_{\Omega_4 }   e^{2c \lambda_k t}   \langle k \rangle^{2m} \langle{k}^{-1} \rangle^{2\varepsilon}   \nu^{\frac{2}{3}} |k|^{-\frac{2}{3}} \| \nabla_k \partial_{y} \omega_k \|_{L^2} |\ell|^{\frac{3}{2}} \|\partial_{y} \phi_\ell \|_{L^\infty}  |k-\ell|^{-\frac{1}{2}} \|  \omega_{k-\ell}\|_{L^2} \,{dk} \,{d\ell} \nn\\
			&\lesssim \left\|  e^{c \lambda_k t}  \nu^{\frac{1}{3}} |k|^{-\frac{1}{3}}   \langle k \rangle^{m} \langle{k}^{-1} \rangle^{\varepsilon} \| \nabla_k \partial_{y} \omega_k\|_{L^2_y} \right\|_{L^2_{\{|k| \gtrsim \nu\}}} \left\|   |k|^{-\frac{1}{2}}  \|   \omega_k\|_{L^2_y} \right\|_{L^1_{k}} \nn\\
			&\quad \quad \cdot \left\| e^{c \lambda_\ell t} \langle \ell \rangle^{m} \langle{\ell}^{-1} \rangle^{\varepsilon} \nu^{\frac{1}{3}} |\ell|^{-\frac{1}{3}} |\ell|^{\frac{3}{2}} \| \partial_{y} \phi_\ell\|_{L^\infty_y} \right\|_{L^2_{\{|\ell|\ge\nu\}}} \nn\\
			&\lesssim \nu^{-\frac{1}{2}} \mathscr{E}^{\frac{1}{2} } \mathscr{D}_2^{\frac{1}{2}} \mathscr{D}_5^{\frac{1}{2}},
	\end{align*}
	where we have utilized the fact that $ \| \partial_{y} \phi_\ell\|_{L^\infty_y} \lesssim |\ell|^{-\frac{1}{2}} \| \partial_{y} \nabla_\ell \phi_\ell\|_{L^2_y}.$
	For the second case, where $ \alpha(k)  \approx 1$, we can apply a method similar to the previous expression to obtain
	\begin{align*}
		\left|\mathcal{N}^y_{2} \right| &= \Big| 2c_{\alpha} \int_{\R^2}\mathbf{1}_{\frac{|\ell|}{2} \le |k| \le 2|\ell|\le2\nu}    e^{2c \lambda_k t} \alpha(k)  \langle k \rangle^{2m} \langle{k}^{-1} \rangle^{2\varepsilon} \\
  &\qquad\quad\cdot\operatorname{Re} \langle \partial_{y} ( 1+ c_\tau \mathfrak{J}_k) \partial_{y} \omega_k,    \left(\partial_{y}  \phi_{\ell} \cdot i(k-\ell)  \omega_{k-\ell}\right) \rangle    \,{dk} \,{d\ell}\Big|\nn\\
		&\lesssim \left\|  e^{c \lambda_k t}     \langle k \rangle^{m} \langle{k}^{-1} \rangle^{\varepsilon} \| \nabla_k \partial_{y} \omega_k\|_{L^2_y} \right\|_{L^2_{\{|k| \lesssim \nu\}}} \left\|   |k|^{-\frac{1}{2}}  \|   \omega_k\|_{L^2_y} \right\|_{L^1_k} \nn\\
&\qquad\times\left\| e^{c \lambda_\ell t} \langle \ell \rangle^{m} \langle{\ell}^{-1} \rangle^{\varepsilon}  |\ell|^{\frac{3}{2}} \| \partial_{y} \phi_\ell\|_{L^\infty_y} \right\|_{L^2_{\{|\ell|\le \nu\}}} \nn\\
		&\quad \quad \cdot  \nn\\
		&\lesssim \nu^{-\frac{1}{2}} \mathscr{E}^{\frac{1}{2} } \mathscr{D}_2^{\frac{1}{2}} \mathscr{D}_5^{\frac{1}{2}}.
	\end{align*}
	For the third case, where 	$ |k| \approx |k-\ell|$, applying Lemmas \ref{pre2} and \ref{pre1}, we have
	\begin{align*}
		\left|\mathcal{N}^y_{2} \right| &= \left| 2c_{\alpha}\!  \int_{\R^2}\!\!\mathbf{1}_{ 2|\ell|  \le |k| }   e^{2c \lambda_k t} \alpha(k) \langle k \rangle^{2m} \langle{k}^{-1} \rangle^{2\varepsilon}   \operatorname{Re} \langle \partial_{y} ( 1+ c_\tau \mathfrak{J}_k) \partial_{y} \omega_k,    \left(\partial_{y}  \phi_{\ell} \cdot i(k-\ell)  \omega_{k-\ell}\right) \rangle    \,{dk} \,{d\ell}\right|\nn\\
		&\lesssim \left\|  e^{c \lambda_k t}  \sqrt{\alpha(k)}   \langle k \rangle^{m} \langle{k}^{-1} \rangle^{\varepsilon} \| \nabla_k \partial_{y} \omega_k\|_{L^2_y} \right\|_{L^2_{k}} \left\|  e^{c \lambda_k t}  \sqrt{\alpha(k)}   \langle k \rangle^{m} \langle{k}^{-1} \rangle^{\varepsilon} |k|  \|   \omega_k\|_{L^2_y} \right\|_{L^2_k} \nn\\
&\qquad\times\left\|  \| \partial_{y} \phi_\ell\|_{L^\infty_y} \right\|_{L^1_{\ell}} \nn\\
		&\lesssim \nu^{-\frac{1}{2}} \mathscr{E}^{\frac{1}{2} } \mathscr{D}_2^{\frac{1}{2}} \mathscr{D}_1^{\frac{1}{4}} \mathscr{D}_3^{\frac{1}{4}}.
	\end{align*}
For the fourth case, which is the most complex, the interaction of frequencies makes controlling the nonlinear terms challenging, complicating the closure of our energy estimates. Therefore, we still need to perform a frequency decomposition, specifically as follows
	\begin{align}\label{split4}
		\mathbf{1}_{ 2|k| \le |\ell|} = \mathbf{1}_{ 2|k| \le |\ell|} \left( \mathbf{1}_{ |k| \ge \nu} +  \mathbf{1}_{ |k| \le \nu} \mathbf{1}_{ |\ell| \le \nu} + \mathbf{1}_{ |k| \le \nu} \mathbf{1}_{ |\ell| \ge \nu} \right).
	\end{align}
	For the first term in \eqref{split4}, given $ |\ell| \approx |k-\ell| $,
$$
 |k-\ell| \lesssim |\ell|^{\frac{3}{2}} |\ell|^{-\frac{1}{3}} |\ell|^{-\frac{1}{6}} \lesssim |\ell|^{\frac{3}{2}} |\ell|^{-\frac{1}{3}} \langle{\ell}^{-1} \rangle^{\varepsilon} \langle \frac{1}{k-\ell} \rangle^{\varepsilon} \langle{k}^{-1} \rangle^{\frac{1}{6} - 2\varepsilon}, $$ along with $ \langle k \rangle^{2m} \lesssim \langle \ell \rangle^{m} \langle k-\ell \rangle^{m} $,  we obtain
	\begin{align*}
		\left|\mathcal{N}^y_{2} \right| &= \Big| 2c_{\alpha} \int_{\R^2}\mathbf{1}_{ 2\nu\le2|k|  \le |\ell| }   e^{2c \lambda_k t} \alpha(k) \langle k \rangle^{2m} \langle{k}^{-1} \rangle^{2\varepsilon}\nn\\
&\qquad\qquad\cdot\operatorname{Re} \langle \partial_{y} ( 1+ c_\tau \mathfrak{J}_k) \partial_{y} \omega_k,    \left(\partial_{y}  \phi_{\ell} \cdot i(k-\ell)  \omega_{k-\ell}\right) \rangle    \,{dk} \,{d\ell}\Big|\nn\\
		&\lesssim \left\| e^{c \lambda_k t} \nu^{\frac{1}{3}} |k|^{-\frac{1}{3}}  \langle{k}^{-1} \rangle^{\frac{1}{6} } \| \nabla_k \partial_{y} \omega_{k}\|_{L^2_y} |k|^{-\frac{1}{3}}\right\|_{L^1_{\{|k| \ge \nu\}}} \left\| \langle k \rangle^{m}  \langle{k}^{-1} \rangle^{\varepsilon } \|\omega_{k} \|_{L^2_y}\right\|_{L^2_k} \nn\\
		&\quad \quad \cdot \left\| e^{c \lambda_\ell t} \nu^{\frac{1}{3}} |\ell|^{-\frac{1}{3}} \langle \ell \rangle^{m}  \langle{\ell}^{-1} \rangle^{\varepsilon } |\ell|^{\frac{3}{2}} \|\partial_{y} \phi_{\ell} \|_{L^\infty} \right\|_{L^2_{\{|\ell| \ge \nu\}}}\nn\\
		&\lesssim \mathscr{E}^{\frac{1}{2} }  \mathscr{D}_5^{\frac{1}{2}} \left\| e^{c \lambda_k t} \nu^{\frac{1}{3}} |k|^{-\frac{1}{3}} \langle k \rangle^{m}  \langle{k}^{-1} \rangle^{\varepsilon }  \| \nabla_k \partial_{y} \omega_{k}\|_{L^2_y} \right\|_{L^2_{\{|k| \ge \nu\}}} \left\| \langle{k}^{-1} \rangle^{\frac{1}{6} } |k|^{-\frac{1}{3}} \langle k \rangle^{-m}  \langle{k}^{-1} \rangle^{-\varepsilon } \right\|_{L^2_{\{|k| \ge \nu\}}}\nn\\
		&\lesssim \nu^{-\frac{1}{2}} \mathscr{E}^{\frac{1}{2} } \mathscr{D}_2^{\frac{1}{2}} \mathscr{D}_5^{\frac{1}{2}}.
	\end{align*}
	For the second term in \eqref{split4}, let $\Omega_5=\left\{(k,\ell)\in\R^2\  \big|\  2|k|  \le |\ell|,\quad |k| \le \nu,\quad|\ell| \le \nu \right\}$,
 given $ |k-\ell| \lesssim  |\ell|^{\frac{3}{2}} |\ell|^{-\frac{1}{2}}  \lesssim |\ell|^{\frac{3}{2}}  \langle{\ell}^{-1} \rangle^{\varepsilon} \langle \frac{1}{k-\ell} \rangle^{\varepsilon} \langle{k}^{-1} \rangle^{\frac{1}{2} - 2\varepsilon}$, we can apply a similar estimation as in the previous expression to obtain
	\begin{align*}
		\left|\mathcal{N}^y_{2} \right| &= \left| 2c_{\alpha} \int_{\Omega_5}  e^{2c \lambda_k t} \alpha(k) \langle k \rangle^{2m} \langle{k}^{-1} \rangle^{2\varepsilon}   \operatorname{Re} \langle \partial_{y} ( 1+ c_\tau \mathfrak{J}_k) \partial_{y} \omega_k,    \left(\partial_{y}  \phi_{\ell} \cdot i(k-\ell)  \omega_{k-\ell}\right) \rangle    \,{dk} \,{d\ell}\right|\nn\\
		&\lesssim \nu^{-\frac{1}{2}} \mathscr{E}^{\frac{1}{2} } \mathscr{D}_2^{\frac{1}{2}} \mathscr{D}_5^{\frac{1}{2}}.
	\end{align*}
	For the last term in \eqref{split4}, let $\Omega_6=\left\{(k,\ell)\in\R^2\  \big|\  2|k|  \le |\ell|, \quad|k| \le \nu,\quad|\ell| \ge \nu \right\}$,  given $ |k-\ell|  \lesssim  |\ell|^{\frac{7}{6}} |\ell|^{-\frac{1}{6}} \lesssim |\ell|^{\frac{7}{6}}  \langle{\ell}^{-1} \rangle^{\varepsilon} \langle \frac{1}{k-\ell} \rangle^{\varepsilon} \langle{k}^{-1} \rangle^{\frac{1}{6} - 2\varepsilon}$, and proceeding similarly to the estimation in \eqref{N2x_last} while combining with Lemma \ref{pre3}, one can get
	\begin{align*}
		\left|\mathcal{N}^y_{2} \right| &= \left| 2c_{\alpha} \int_{\Omega_6}   e^{2c \lambda_k t} \alpha(k) \langle k \rangle^{2m} \langle{k}^{-1} \rangle^{2\varepsilon}   \operatorname{Re} \langle \partial_{y} ( 1+ c_\tau \mathfrak{J}_k) \partial_{y} \omega_k,    \left(\partial_{y}  \phi_{\ell} \cdot i(k-\ell)  \omega_{k-\ell}\right) \rangle    \,{dk} \,{d\ell}\right|\nn\\
		&\lesssim  \left\| e^{c \lambda_k t}   \langle{k}^{-1} \rangle^{\frac{1}{6} } \| \nabla_k \partial_{y} \omega_{k}\|_{L^2_y} \right\|_{L^1_{\{|k| \le \nu\}}}   \left\| \langle k \rangle^{m}  \langle{k}^{-1} \rangle^{\varepsilon } \|\omega_{k} \|_{L^2_y}\right\|_{L^2_k}\nn\\
&\qquad\times\left\| e^{c \lambda_\ell t}  \langle \ell \rangle^{m}  \langle{\ell}^{-1} \rangle^{\varepsilon } |\ell|^{\frac{7}{6}} \|\partial_{y} \phi_{\ell} \|_{L^\infty} \right\|_{L^2_{\{|\ell| \ge \nu\}}}\nn\\
		&\lesssim \nu^{-\frac{1}{2} + \varepsilon} \mathscr{E}^{\frac{1}{2} } \mathscr{D}_2^{\frac{1}{2}} \mathscr{D}_5^{\frac{1}{2}}.
	\end{align*}
	Combining the estimates for $\mathcal{N}^y_{2}$ across the various frequency partitions, we obtain the estimate for $ \mathcal{N}^y_{2}$
	\begin{align}\label{N2y_estimate}
	\left|\mathcal{N}^y_{2} \right| \lesssim \nu^{-\frac{1}{2}}    \mathscr{E}^{\frac{1}{2} }  \mathscr{D}_2^{\frac{1}{2}}  \mathscr{D}_5^{\frac{1}{2}} + \nu^{-\frac{1}{2}} \mathscr{E}^{\frac{1}{2} } \mathscr{D}_2^{\frac{1}{2}} \mathscr{D}_1^{\frac{1}{4}} \mathscr{D}_3^{\frac{1}{4}}.
	\end{align}
	Finally, by combining \eqref{N2x_estimate} and \eqref{N2y_estimate}, we complete the proof of this lemma.
	\end{proof}
\end{lemma}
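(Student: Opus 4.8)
The plan is to split $\mathcal{N}_2$ into the two pieces generated by the two terms of the Jacobian nonlinearity $\mathcal{N}_{\omega,k}=-(\nabla^{\top}\phi\cdot\nabla\omega)_k$. Writing the $x$-frequency convolution explicitly, I would set $\mathcal{N}_2=\mathcal{N}_2^x+\mathcal{N}_2^y$, where $\mathcal{N}_2^x$ collects the contribution of $i\ell\phi_\ell\cdot\partial_y\omega_{k-\ell}$ and $\mathcal{N}_2^y$ that of $\partial_y\phi_\ell\cdot i(k-\ell)\omega_{k-\ell}$, each tested against $\partial_y(1+c_\tau\mathfrak{J}_k)\partial_y\omega_k$ and weighted by $c_\alpha\alpha(k)$. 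The decisive first observation is that the test factor is \emph{always} controlled by the dissipation $\mathscr{D}_2$: expanding $\partial_y(1+c_\tau\mathfrak{J}_k)\partial_y\omega_k=\partial_y^2\omega_k+c_\tau\mathfrak{J}_k\partial_y^2\omega_k+c_\tau[\partial_y,\mathfrak{J}_k]\partial_y\omega_k$ and using the $L^2$-boundedness of $\mathfrak{J}_k$ (Lemma \ref{le2.1}) together with $\|[\partial_y,\mathfrak{J}_k]\|\lesssim|k|$ (Lemma \ref{le2.2}) and the self-adjointness of Lemma \ref{le2.3}, its $L^2_y$ norm is $\lesssim\|\nabla_k\partial_y\omega_k\|_{L^2_y}$. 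Hence the quantity $\sqrt{\alpha(k)}\,\|\nabla_k\partial_y\omega_k\|_{L^2_y}$ weighted by $e^{c\lambda_k t}\langle k\rangle^m\langle k^{-1}\rangle^\varepsilon$ is exactly $\nu^{-1/2}$ times the integrand of $\mathscr{D}_2^{1/2}$, which produces the common prefactor $\nu^{-1/2}\mathscr{D}_2^{1/2}$ in every term of \eqref{N2}. The remaining task is then to bound the two input factors by $\mathscr{E}^{1/2}$, $\mathscr{D}_4^{1/2}$, $\mathscr{D}_1^{1/4}\mathscr{D}_3^{1/4}$, or $\mathscr{D}_5^{1/4}$.

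For each of $\mathcal{N}_2^x$ and $\mathcal{N}_2^y$ I would partition frequency space into the three dyadic regimes: the balanced regime $\tfrac12|k-\ell|\le|k|\le2|k-\ell|$, the regime $2|k-\ell|\le|k|$ (forcing $|k|\approx|\ell|$), and the regime $2|k|\le|k-\ell|$ (forcing $|\ell|\approx|k-\ell|$); for $\mathcal{N}_2^y$ the natural comparison variable is $|\ell|$, so the analogous split is around $|k|\approx|\ell|$. Inside each regime I would insert the cutoffs $\mathbf 1_{|k|\lessgtr\nu}$ and $\mathbf 1_{|k-\ell|\lessgtr\nu}$ so that $\alpha$ and $\lambda_k$ take their explicit forms. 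In the balanced and $\ell\approx k$ regimes the estimates are direct: one places $|\ell|\,\|\phi_\ell\|_{L^\infty_y}$ or $\|\partial_y\phi_\ell\|_{L^\infty_y}$ in $L^1_\ell$ (controlled by $\mathscr{D}_4^{1/2}$ or $\mathscr{E}^{1/2}$ through Lemma \ref{pre1}), puts the surviving vorticity factor in $L^2_k$ against $\mathscr{E}$ or $\mathscr{D}_2$, and checks that the leftover multiplier $\langle k\rangle^{-m}\langle k^{-1}\rangle^{-\varepsilon}|k|^{-1/2}$ lies in $L^2_k$; the Gagliardo--Nirenberg inequality $\|f\|_{L^\infty_y}\lesssim|k|^{-1/2}\|\nabla_k f\|_{L^2_y}$ is used throughout to pass between $L^\infty_y$ and $L^2_y$. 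The $\mathscr{D}_1^{1/4}\mathscr{D}_3^{1/4}$ contribution arises in $\mathcal{N}_2^y$ when $2|\ell|\le|k|$: there $|k|\approx|k-\ell|$, and I would estimate the remaining factor $\sqrt{\alpha(k)}\,|k|\,\|\omega_k\|_{L^2_y}$ by $\mathscr{D}_1^{1/4}\mathscr{D}_3^{1/4}$ using the first inequality of Lemma \ref{pre2}.

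The genuine obstacle is the low-output-frequency regime $2|k|\le|k-\ell|$ (resp. $2|k|\le|\ell|$), where the output mode $k$ sits near the continuous zero frequency, $\alpha(k)$ and $\lambda_k$ are largest, and the test factor therefore carries an extra negative power of $|k|$ that must be redistributed. The strategy I would use is to split $\alpha(k)=\nu^{2/3}|k|^{-2/3}\lesssim\nu^{1/3}|k|^{-1/3}\cdot\nu^{1/3}|k-\ell|^{-1/3}$ (using $|k|\lesssim|k-\ell|$), keeping $\sqrt{\alpha(k)}=\nu^{1/3}|k|^{-1/3}$ on the $\mathscr{D}_2$ factor and transferring the companion power $\nu^{1/3}|k-\ell|^{-1/3}$ onto the high-frequency input so it can be absorbed into $\mathscr{D}_4$ or $\mathscr{D}_5$. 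Simultaneously the singular low-frequency weight is moved from the output to the inputs via $|\ell|^{-1/2}\lesssim\langle\ell^{-1}\rangle^\varepsilon\langle(k-\ell)^{-1}\rangle^\varepsilon\langle k^{-1}\rangle^{1/2-2\varepsilon}$ together with $\langle k\rangle^{2m}\lesssim\langle\ell\rangle^m\langle k-\ell\rangle^m$. The hypothesis $\varepsilon\in(0,\tfrac1{12})$ is precisely what keeps the residual multiplier $|k|^{-1/3}\langle k\rangle^{-m}\langle k^{-1}\rangle^{1/6-\varepsilon}$ (or its $\tfrac12$-analogue) in $L^2_{\{|k|\le\nu\}}$ or $L^4_{\{|k|\le\nu\}}$, so that no logarithmic slack appears.

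Handling the four combinations of $|k|\lessgtr\nu$ and $|k-\ell|\lessgtr\nu$ inside this regime, and invoking the sharp stream-function bounds of Lemma \ref{pre3} for the $\mathscr{D}_5$ contributions appearing in $\mathcal{N}_2^y$ (via $\|\partial_y\phi_\ell\|_{L^\infty_y}$ and $\|\partial_y^2\phi_\ell\|_{L^\infty_y}$ estimates), yields $|\mathcal{N}_2^x|\lesssim\nu^{-1/2}\mathscr{E}^{1/2}\mathscr{D}_2^{1/2}\mathscr{D}_4^{1/2}$ and the matching bound for $\mathcal{N}_2^y$ involving $\mathscr{D}_5$ and $\mathscr{D}_1^{1/4}\mathscr{D}_3^{1/4}$; collecting the two contributions gives \eqref{N2}. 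I expect the main difficulty to be exactly this balancing of the weights $\alpha(k)$, $\langle k^{-1}\rangle^\varepsilon$, and the enhanced-dissipation factors across the three frequency slots in the degenerate $2|k|\le|k-\ell|$ region, since it is there that a crude estimate would reintroduce the logarithmic loss of \cite{arbon2024}.
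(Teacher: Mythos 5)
Your overall architecture matches the paper's proof: the same splitting $\mathcal{N}_2=\mathcal{N}_2^x+\mathcal{N}_2^y$, the same control of the test factor $\partial_y(1+c_\tau\mathfrak{J}_k)\partial_y\omega_k$ by $\|\nabla_k\partial_y\omega_k\|_{L^2_y}$ via Lemmas \ref{le2.1}--\ref{le2.3}, the same three dyadic regimes refined by $\nu$-cutoffs, and the same sources for the factors $\mathscr{D}_4^{1/2}$, $\mathscr{D}_5^{1/2}$ and $\mathscr{D}_1^{1/4}\mathscr{D}_3^{1/4}$. However, there is a genuine error exactly in the regime you yourself identify as the crux, $2|k|\le|k-\ell|$ (resp.\ $2|k|\le|\ell|$): the inequality you propose,
\[
\alpha(k)=\nu^{2/3}|k|^{-2/3}\ \lesssim\ \nu^{1/3}|k|^{-1/3}\cdot\nu^{1/3}|k-\ell|^{-1/3}
\quad\text{``using } |k|\lesssim|k-\ell|\text{'',}
\]
goes the wrong way. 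It is equivalent to $|k|^{-1/3}\lesssim|k-\ell|^{-1/3}$, i.e.\ $|k-\ell|\lesssim|k|$, which is the opposite of the hypothesis of this regime: when $k$ is the small output frequency, $\alpha(k)$ is large and cannot be dominated by a quantity decaying in the large frequency $|k-\ell|$. (That split is legitimate, and is used in the paper, only in the complementary regime $2|k-\ell|\le|k|$.) Replacing $\nu^{1/3}|k|^{-1/3}$ by $\nu^{1/3}|k-\ell|^{-1/3}$ here produces a lower bound of the integrand rather than an upper bound, so the step fails precisely where a crude estimate would reintroduce the logarithmic loss.

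The paper's mechanism in this regime is different in a way that matters. The output factor retains the full weight $\nu^{1/3}|k|^{-2/3}=\sqrt{\alpha(k)}\,|k|^{-1/3}$ and is placed in $L^1_k$; the extra singular factor $|k|^{-1/3}$ is then paid for by H\"older against the multiplier $|k|^{-1/3}\langle k\rangle^{-m}\langle k^{-1}\rangle^{1/6-\varepsilon}$, which is square integrable near $k=0$ because $\varepsilon>0$ (this is the step that removes the logarithm of \cite{arbon2024}). Only the bare $\nu^{1/3}$ is moved from $\alpha(k)$ to the input side; the accompanying decay $|k-\ell|^{-1/3}$ there is generated not from $\alpha(k)$ but from redistributing powers of the two comparable \emph{large} frequencies, e.g.\ $|\ell|^{-1/2}\lesssim|k-\ell|^{-1/3}|\ell|^{-1/6}$ using $|\ell|\approx|k-\ell|$. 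After this, for $\mathcal{N}_2^x$ the product $\nu^{1/3}|k-\ell|^{-1/3}\|\partial_y\omega_{k-\ell}\|_{L^2_y}=\sqrt{\alpha(k-\ell)}\|\partial_y\omega_{k-\ell}\|_{L^2_y}$ is absorbed into $\mathscr{E}^{1/2}$ --- not into $\mathscr{D}_4$ or $\mathscr{D}_5$ as you state --- while for $\mathcal{N}_2^y$ the factor $\nu^{1/3}|\ell|^{-1/3}$ lands on the stream function and yields $\mathscr{D}_5^{1/2}$. Your own mention of the residual multiplier $|k|^{-1/3}\langle k\rangle^{-m}\langle k^{-1}\rangle^{1/6-\varepsilon}$ is consistent with this correct bookkeeping, but it contradicts your stated claim that only $\sqrt{\alpha(k)}$ remains on the $\mathscr{D}_2$ factor; as written, the proposal does not close in the one case where the argument is delicate, and it needs to be repaired along the lines above.
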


\begin{lemma}\label{le_3}
	Under the conditions of Theorem \ref{thm1.2}, there exists a constant $C$, depending only on $m$ and $ \varepsilon $, such that  $ \forall t>0$,  there holds
	\begin{align}\label{N3}
	\mathcal{N}_{3} \lesssim \nu^{-\frac{1}{2}} \mathscr{E}^{\frac{1}{2}} \Big( \mathscr{D}_1^{\frac{3}{8}} \mathscr{D}_3^{\frac{5}{8}} +   \mathscr{D}_1^{\frac{1}{2}} \mathscr{D}_3^{\frac{1}{2}} + \mathscr{D}_1^{\frac{1}{2}} \mathscr{D}_4^{\frac{1}{2}} + \mathscr{D}_1^{\frac{3}{4}} \mathscr{D}_3^{\frac{1}{4}} +  \mathscr{D}_1^{\frac{1}{8}} \mathscr{D}_2^{\frac{1}{4}} \mathscr{D}_3^{\frac{3}{8}} \mathscr{D}_5^{\frac{1}{4}} +  \mathscr{D}_1^{\frac{1}{4}} \mathscr{D}_2^{\frac{1}{4}} \mathscr{D}_3^{\frac{1}{2}}\Big).
	\end{align}
	\begin{proof}
		Next, we consider the estimation of $ \mathcal{N}_{3} $. It suffices to focus on the first term of $ \mathcal{N}_{31} $, as the second term can be handled similarly after performing integration by parts.
		\begin{align*}
			\mathcal{N}_{3,1} = -c_\beta   \int_{\R} \mathbf{1}_{\{|k| \ge \nu\}} e^{2c \lambda_k t} \beta(k) \langle k \rangle^{2m} \langle{k}^{-1} \rangle^{2\varepsilon}   \operatorname{Re} \langle ik  \omega_k,  \partial_{y} \mathcal{N}_{\omega, k} \rangle   \,{dk} \stackrel{\mathrm{def}}{=} \mathcal{N}^x_{3,1} + \mathcal{N}^y_{3,1}.
		\end{align*}
		For the term $ \mathcal{N}^x_{3,1}$, by applying integration by parts and H\"older's inequality, we immediately obtain
		\begin{align*}
			\left|\mathcal{N}^x_{3,1}\right| &= \left| c_{\beta} \int_{\R^2} \mathbf{1}_{\{|k| \ge \nu\}} e^{2c \lambda_k t} \beta(k) \langle k \rangle^{2m} \langle{k}^{-1} \rangle^{2\varepsilon}   \operatorname{Re} \langle ik  \partial_{y} \omega_k,   \left( i\ell\phi_{\ell} \cdot \partial_{y} \omega_{k-\ell}\right) \rangle   \,{dk} \,{d\ell}\right|\nn\\
			&\lesssim \left|  \int_{\R^2} \mathbf{1}_{\{|k| \ge \nu\}} e^{2c \lambda_k t} \nu^{\frac{1}{3}} |k|^{-\frac{1}{3}} \langle k \rangle^{2m} \langle{k}^{-1} \rangle^{2\varepsilon}      \| \partial_{y} \omega_k\|_{L^2_y} \|\ell\phi_{\ell}\|_{L^\infty_y} \| \partial_{y} \omega_{k-\ell} \|_{L^2_y} \,{dk} \,{d\ell}\right|.
		\end{align*}
		Next, we perform a frequency decomposition identical to that in $ \mathcal{N}^x_{3,1}$:
		\begin{align*}
			\mathbf{1} = \mathbf{1}_{\frac{|k-\ell|}{2} \le |k| \le 2|k-\ell|}  + \mathbf{1}_{ 2|k-\ell| \le |k|} + \mathbf{1}_{ 2|k| \le |k-\ell|}.
		\end{align*}
		For one of the cases, let $\Omega_7=\left\{(k,\ell)\in\R^2\  \big|\  {|k-\ell|}/{2} \le |k| \le 2|k-\ell|,\quad |k|\ge \nu\right\}$, it follows directly from Young's inequality and Lemma \ref{pre1} that		
\begin{align*}
			\left|\mathcal{N}^x_{3,1}\right| &\lesssim \left|  \int_{\Omega_7}  e^{2c \lambda_k t} \nu^{\frac{1}{3}} |k|^{-\frac{1}{3}} \langle k \rangle^{2m} \langle{k}^{-1} \rangle^{2\varepsilon}      \| \partial_{y} \omega_k\|_{L^2_y} \|\ell\phi_{\ell}\|_{L^\infty_y} \| \partial_{y} \omega_{k-\ell} \|_{L^2_y} \,{dk} \,{d\ell}\right|\nn\\
			&\lesssim  \left\| e^{c \lambda_k t} \nu^{\frac{1}{3}} |k|^{-\frac{1}{3}} \langle k \rangle^{m} \langle{k}^{-1} \rangle^{\varepsilon}      \| \partial_{y} \omega_k\|_{L^2_y} \right\|_{L^2_{\{|k| \ge \nu\}}} \nn\\
&\qquad\times\left\| \|\ell\phi_{\ell}\|_{L^\infty_y} \right\|_{L^1_\ell} \left\| \langle k \rangle^{m} \langle{k}^{-1} \rangle^{\varepsilon}      \| \partial_{y} \omega_k\|_{L^2_y}\right\|_{L^2_k}\nn\\
			&\lesssim \nu^{-\frac{1}{2} } \mathscr{E}^{\frac{1}{2} } \mathscr{D}_1^{\frac{1}{2}} \mathscr{D}_4^{\frac{1}{2}}.
		\end{align*}
		For the second and third terms, by applying estimates similar to those in  \eqref{n1x_2} and \eqref{l=k-l}, we obtain
		\begin{align*}
		\left|\mathcal{N}^x_{3,1}\right| &\lesssim \left|  \int_{\R^2} \mathbf{1}_{\big\{ 2|k-\ell| \le |k|\big\}\cap \big\{ |k|\ge \nu\big\}} e^{2c \lambda_k t} \sqrt{\alpha(k)} \langle k \rangle^{2m} \langle{k}^{-1} \rangle^{2\varepsilon}      \| \partial_{y} \omega_k\|_{L^2_y} \|\ell\phi_{\ell}\|_{L^\infty_y} \| \partial_{y} \omega_{k-\ell} \|_{L^2_y} \,{dk} \,{d\ell} \right|\nn\\
		&\lesssim \nu^{-\frac{1}{2} } \mathscr{E}^{\frac{1}{2} } \mathscr{D}_1^{\frac{1}{2}} \mathscr{D}_4^{\frac{1}{2}},
		\end{align*}
		and
		\begin{align*}
			\left|\mathcal{N}^x_{3,1}\right| &\lesssim \left|  \int_{\R^2}\mathbf{1}_{\big\{ 2|k| \le |k-\ell|\big\}\cap \big\{ |k|\ge \nu\big\}}  e^{2c \lambda_k t} \sqrt{\alpha(k)} \langle k \rangle^{2m} \langle{k}^{-1} \rangle^{2\varepsilon}      \| \partial_{y} \omega_k\|_{L^2_y} \|\ell\phi_{\ell}\|_{L^\infty_y} \| \partial_{y} \omega_{k-\ell} \|_{L^2_y} \,{dk} \,{d\ell} \right|\nn\\
			&\lesssim \nu^{-\frac{1}{2} } \mathscr{E}^{\frac{1}{2} } \mathscr{D}_1^{\frac{1}{2}} \mathscr{D}_4^{\frac{1}{2}}.
		\end{align*}
		Collecting the above estimates for	$ \mathcal{N}^x_{3,1}$	in all the different cases together, we arrive at
		\begin{align}\label{N31x}
			\mathcal{N}^x_{3,1} \lesssim \nu^{-\frac{1}{2} } \mathscr{E}^{\frac{1}{2} } \mathscr{D}_1^{\frac{1}{2}} \mathscr{D}_4^{\frac{1}{2}}.
		\end{align}
	 Next, we address the most challenging term $\mathcal{N}^y_{3,1} $ in this paper. We begin by applying the derivative $ \partial_y$ to $ \partial_{y} \phi_{\ell}$ and $ \omega_{k-\ell}$ separately, followed by a detailed estimation of each resulting term
	    \begin{align*}
	    		\left|\mathcal{N}^y_{3,1}\right| =& \left| c_{\beta} \int_{\R^2} \mathbf{1}_{\{|k| \ge \nu\}} e^{2c \lambda_k t} \beta(k) \langle k \rangle^{2m} \langle{k}^{-1} \rangle^{2\varepsilon}   \operatorname{Re} \langle ik   \omega_k,   \partial_{y}\left( \partial_{y} \phi_{\ell} \cdot i(k-\ell) \omega_{k-\ell}\right) \rangle   \,{dk} \,{d\ell}\right|\nn\\
	    		\le& \left| c_{\beta} \int_{\R^2} \mathbf{1}_{\{|k| \ge \nu\}} e^{2c \lambda_k t} \beta(k) \langle k \rangle^{2m} \langle{k}^{-1} \rangle^{2\varepsilon}   \operatorname{Re} \langle ik   \omega_k,   \left( \partial^2_{y} \phi_{\ell} \cdot i(k-\ell) \omega_{k-\ell}\right) \rangle   \,{dk} \,{d\ell} \right|\nn\\
	    		& +  \left| c_{\beta} \int_{\R^2} \mathbf{1}_{\{|k| \ge \nu\}} e^{2c \lambda_k t} \beta(k) \langle k \rangle^{2m} \langle{k}^{-1} \rangle^{2\varepsilon}   \operatorname{Re} \langle ik   \omega_k,   \left( \partial_{y} \phi_{\ell} \cdot i(k-\ell) \partial_{y}\omega_{k-\ell}\right) \rangle   \,{dk} \,{d\ell} \right|\nn\\
 \stackrel{\mathrm{def}}{=}& \mathcal{N}^{y,1}_{3,1} + \mathcal{N}^{y,2}_{3,1}.
	    \end{align*}
	    For the term $\mathcal{N}^{y,1}_{3,1} $, we need to perform the following frequency decomposition
	    \begin{align*}
	    \mathbf{1} = \mathbf{1}_{\frac{|k-\ell|}{2} \le |k| \le 2|k-\ell|}  + \mathbf{1}_{ 2|k-\ell| \le |k|} \left( \mathbf{1}_{ \nu \le |k| \le 1} + \mathbf{1}_{  |k| \ge 1} \right) + \mathbf{1}_{ 2|k| \le |k-\ell|}.
	    \end{align*}
	    For the first case above,
 by applying Young's inequality along with Lemma \ref{pre2} and \ref{pre3}, we obtain
	    \begin{align*}
	    	\left|\mathcal{N}^{y,1}_{3,1}\right| &= \left| c_{\beta} \int_{\Omega_7}  e^{2c \lambda_k t} \nu^{\frac{1}{3}} |k|^{-\frac{4}{3}}  \langle k \rangle^{2m} \langle{k}^{-1} \rangle^{2\varepsilon}   \operatorname{Re} \langle ik   \omega_k,   \left( \partial^2_{y} \phi_{\ell} \cdot i(k-\ell) \omega_{k-\ell}\right) \rangle   \,{dk} \,{d\ell} \right|\\
	    	&\lesssim \int_{\Omega_7} e^{2c \lambda_k t} \nu^{\frac{1}{6}} |k|^{\frac{1}{3}}  \langle k \rangle^{2m} \langle{k}^{-1} \rangle^{2\varepsilon}   \| \omega_k\|_{L^2_y}   \| \partial^2_{y} \phi_{\ell} \|_{L^\infty_y} \nu^{\frac{1}{6}} |k-\ell|^{\frac{1}{3}} \| \omega_{k-\ell}\|_{L^2_y}    \,{dk} \,{d\ell}\\
	    	&\lesssim \nu^{\frac{1}{6}} \left( \int_{\{|k| \ge \nu\}} e^{2c \lambda_k t} |k|^{\frac{2}{3}}  \langle k \rangle^{2m} \langle{k}^{-1} \rangle^{2\varepsilon} \| \omega_{k} \|^2_{L^2_y}  \,{dk} \right)^{\frac{1}{2}} \left\| \| \partial^2_{y} \phi_{\ell} \|_{L^\infty_y}\right\|_{L^1_\ell} \cdot \mathscr{D}_3^{\frac{1}{2}}\nn\\
	    	&\lesssim \nu^{-\frac{1}{4}} \mathscr{E}^{\frac{1}{2}} \mathscr{D}_1^{\frac{3}{8}} \mathscr{D}_3^{\frac{5}{8}}.
	    \end{align*}
	    For the second case, let $\Omega_8=\left\{(k,\ell)\in\R^2\  \big|\  |k| \ge 2|k-\ell|,\quad  1 \ge |k|\ge \nu\right\}$,
 based on $ |k| \approx |\ell|$ and $ \nu^{\frac{1}{3}} |k|^{\frac{1}{6}} \lesssim \nu^{\frac{1}{6}} |k|^{\frac{1}{3}}$, along with Lemma \ref{pre1}, one can get
	    \begin{align*}
	    \left|\mathcal{N}^{y,1}_{3,1}\right| &= \left| c_{\beta} \int_{\Omega_8}  e^{2c \lambda_k t} \nu^{\frac{1}{3}} |k|^{-\frac{4}{3}}  \langle k \rangle^{2m} \langle{k}^{-1} \rangle^{2\varepsilon}   \operatorname{Re} \langle ik   \omega_k,   \left( \partial^2_{y} \phi_{\ell} \cdot i(k-\ell) \omega_{k-\ell}\right) \rangle   \,{dk} \,{d\ell} \right|\\
	    &\lesssim \nu^{-\frac{1}{2}} \int_{\Omega_8}  e^{2c \lambda_k t} \nu^{\frac{1}{6}} |k|^{\frac{1}{3}}  \langle k \rangle^{2m} \langle{k}^{-1} \rangle^{2\varepsilon}   \| \omega_k\|_{L^2_y}   \nu^{\frac{1}{2}} |\ell|^{\frac{1}{2}}\| \partial^2_{y} \phi_{\ell} \|_{L^\infty_y} \| \omega_{k-\ell}\|_{L^2_y}    \,{dk} \,{d\ell}\\
	    &\lesssim \nu^{-\frac{1}{2}} \mathscr{D}_3^{\frac{1}{2}}  \nu^{\frac{1}{2}}\left\|  \|e^{c \lambda_\ell t} \langle \ell \rangle^{m} \langle{\ell}^{-1} \rangle^{\varepsilon} |\ell|^{\frac{1}{2}}  \partial^2_{y} \phi_{\ell} \|_{L^\infty_y}\right\|_{L^2_\ell} \cdot \left\| \| \omega_{k}\|_{L^2_y} \right\|_{L^1_k}\nn\\
	    &\lesssim \nu^{-\frac{1}{2}} \mathscr{E}^{\frac{1}{2}} \mathscr{D}_1^{\frac{1}{2}} \mathscr{D}_3^{\frac{1}{2}}.
	    \end{align*}
	     The last line relies on the fact that  $$ \| |\ell|^{\frac{1}{2}}  \partial^2_{y} \phi_{\ell} \|_{L^\infty_y} \lesssim  \|\nabla_\ell \partial^2_{y} \phi_{\ell} \|_{L^2_y} \lesssim \|\nabla_\ell \omega_{\ell} \|_{L^2_y} \quad   \text{and} \quad  \|\langle  k \rangle^{-m} \langle{k}^{-1} \rangle^{- \varepsilon}  \|_{L^2_k} \lesssim 1.$$
	    For the third case, where $ |k| \approx |\ell|$, let $\Omega_{9}=\left\{(k,\ell)\in\R^2\  \big|\   |k| \ge 2|k-\ell|,\quad|k|\ge 1\right\},$
 by directly applying Lemma \ref{pre2} and \ref{pre3}, it follows readily that
	    \begin{align*}
	    	\left|\mathcal{N}^{y,1}_{3,1}\right| &= \left| c_{\beta} \int_{\Omega_{9}}  e^{2c \lambda_k t} \nu^{\frac{1}{3}} |k|^{-\frac{4}{3}}  \langle k \rangle^{2m} \langle{k}^{-1} \rangle^{2\varepsilon}   \operatorname{Re} \langle ik   \omega_k,   \left( \partial^2_{y} \phi_{\ell} \cdot i(k-\ell) \omega_{k-\ell}\right) \rangle   \,{dk} \,{d\ell} \right|\\
	    	&\lesssim  \int_{\Omega_{9}}  e^{2c \lambda_k t} \nu^{\frac{1}{12}} |k|^{\frac{1}{6}}  \langle k \rangle^{2m} \langle{k}^{-1} \rangle^{2\varepsilon}   \| \omega_k\|_{L^2_y}   \nu^{\frac{1}{4}}\| \partial^2_{y} \phi_{\ell} \|_{L^\infty_y} |k-\ell| \| \omega_{k-\ell}\|_{L^2_y}    \,{dk} \,{d\ell}\\
	    	&\lesssim  \left\| \nu^{\frac{1}{12}} |k|^{\frac{1}{6}} e^{c \lambda_k t} \langle k \rangle^m \langle{k}^{-1} \rangle^{\varepsilon} \| \omega \|_{L^2_y}\right\|_{L^2_{\{|k| \ge 1\}}} \cdot \left\| |k-\ell| \| \omega_{k}\|_{L^2_y} \right\|_{L^1_k}\nn\\
&\qquad\times\nu^{\frac{1}{4}}\left\|  \|e^{c \lambda_\ell t} \langle \ell \rangle^{m} \langle{\ell}^{-1} \rangle^{\varepsilon}   \partial^2_{y} \phi_{\ell} \|_{L^\infty_y}\right\|_{L^2_\ell}\nn\\
	    	&\lesssim \nu^{-\frac{1}{2}} \mathscr{E}^{\frac{1}{2}} \mathscr{D}_1^{\frac{3}{4}} \mathscr{D}_3^{\frac{1}{4}}.
	    \end{align*}
	    Additionally, for the fourth case, where $ |\ell| \approx |k-\ell|$ and $ |\ell|^{-\frac{1}{6}} \lesssim \langle{\ell}^{-1} \rangle^{\varepsilon} \langle\frac{1}{k-\ell} \rangle^{\varepsilon} \langle{k}^{-1} \rangle^{\frac{1}{6} - 2\varepsilon}$, by directly applying Lemmas \ref{pre2} and \ref{pre3}, we obtain
	    \begin{align*}
	    \left|\mathcal{N}^{y,1}_{3,1}\right| &\lesssim  \left|  \int_{\R^2}\mathbf{1}_{ 2\nu\le2 |k| \le |k-\ell| }  e^{2c \lambda_k t} \nu^{\frac{1}{3}} |k|^{-\frac{1}{3}}  \langle k \rangle^{2m} \langle{k}^{-1} \rangle^{2\varepsilon}   \| \omega_k\|_{L^2_y} \| \partial^2_{y} \phi_{\ell}\|_{L^\infty_y} \|(k-\ell) \omega_{k-\ell}\|_{L^2_y}   \,{dk} \,{d\ell} \right|\\
	    &\lesssim \int_{\R^2}\mathbf{1}_{ 2\nu\le2 |k| \le |k-\ell| } e^{2c \lambda_k t}  |k|^{-\frac{1}{3}}  \langle{k}^{-1} \rangle^{\frac{1}{6}}    \| \omega_k\|_{L^2_y}   \nu^{\frac{1}{3}} |\ell|^{\frac{2}{3}} \langle \ell \rangle^{m} \langle{\ell}^{-1} \rangle^{\varepsilon}  \| \partial^2_{y} \phi_{\ell} \|_{L^\infty_y}\nn\\
 &\qquad\times\langle k-\ell \rangle^{m} \langle\frac{1}{k-\ell} \rangle^{\varepsilon}  |k-\ell|^{\frac{1}{2}} \| \omega_{k-\ell}\|_{L^2_y}    \,{dk} \,{d\ell}\\
	    &\lesssim \left\| |k|^{-\frac{1}{3}} \langle{k}^{-1} \rangle^{\frac{1}{6}} \|\omega_{k} \|_{L^2_y} \right\|_{L^1_{\{|k| \ge \nu\}}} \left\| e^{c \lambda_\ell t}  \nu^{\frac{1}{3}} |\ell|^{\frac{2}{3}} \langle \ell \rangle^{m} \langle{\ell}^{-1} \rangle^{\varepsilon}  \| \partial^2_{y} \phi_{\ell} \|_{L^\infty_y}\right\|_{L^2_{\{|\ell| \gtrsim \nu\}}}  \nn\\
 &\qquad\times\left\|e^{c \lambda_k t} \langle k \rangle^{m} \langle\frac{1}{k} \rangle^{\varepsilon}  |k|^{\frac{1}{2}} \| \omega_{k}\|_{L^2_y}    \right\|_{L^2_{\{|k|\ge \nu\}}}\nn\\
	    &\lesssim \nu^{-\frac{1}{2}} \mathscr{E}^{\frac{1}{2}} \mathscr{D}_1^{\frac{1}{8}} \mathscr{D}_2^{\frac{1}{4}} \mathscr{D}_3^{\frac{3}{8}} \mathscr{D}_5^{\frac{1}{4}}.
	    \end{align*}
Finally, we address the term $ \mathcal{N}^{y,2}_{3,1}$. Using the same frequency decomposition as for $ \mathcal{N}^{x}_{1}$,
	    \begin{align*}
	    \mathbf{1} = \mathbf{1}_{\frac{|k-\ell|}{2} \le |k| \le 2|k-\ell|}  + \mathbf{1}_{ 2|k-\ell| \le |k|}  + \mathbf{1}_{ 2|k| \le |k-\ell|},
	    \end{align*}
we proceed to estimate each case. For the first case,
 By Lemmas \ref{pre1} and \ref{pre2}, we obtain
\begin{align*}
	    	\mathcal{N}^{y,2}_{3,1} &= \left| c_{\beta} \int_{\Omega_{7}}  e^{2c \lambda_k t} \nu^{\frac{1}{3}} |k|^{-\frac{4}{3}} \langle k \rangle^{2m} \langle{k}^{-1} \rangle^{2\varepsilon}   \operatorname{Re} \langle ik   \omega_k,   \left( \partial_{y} \phi_{\ell} \cdot i(k-\ell) \partial_{y}\omega_{k-\ell}\right) \rangle   \,{dk} \,{d\ell} \right| \\
	    	&\lesssim  \left\| e^{c \lambda_k t} \nu^{\frac{1}{3}} |k|^{\frac{2}{3}} \langle k \rangle^{m} \langle{k}^{-1} \rangle^{\varepsilon}  \|\omega_{k} \|_{L^2_y} \right\|_{L^2_{\{|k|\ge \nu\}}} \left\| e^{c \lambda_k t} \langle k \rangle^{m} \langle{k}^{-1} \rangle^{\varepsilon}  \|\partial_{y} \omega_{k} \|_{L^2_y}\right\|_{L^2_{\{|k|\gtrsim \nu\}}} \left\| \|\partial_{y}  \phi_{\ell} \|_{L^\infty_y} \right\|_{L^1_\ell}\\
	    	&\lesssim \nu^{-\frac{1}{2}} \mathscr{E}^{\frac{1}{2}} \mathscr{D}_1^{\frac{3}{4}} \mathscr{D}_3^{\frac{1}{4}}.
	    \end{align*}
	    For the second case, let $$\Omega_{10}=\left\{(k,\ell)\in\R^2\  \big|\ 2|k-\ell| \le |k|,\quad |k|\ge \nu\right\}.$$
A similar argument yields
\begin{align*}
	    	\mathcal{N}^{y,2}_{3,1} &= \left| c_{\beta} \int_{\Omega_{10}}  e^{2c \lambda_k t} \nu^{\frac{1}{3}} |k|^{-\frac{4}{3}} \langle k \rangle^{2m} \langle{k}^{-1} \rangle^{2\varepsilon}   \operatorname{Re} \langle ik   \omega_k,   \left( \partial_{y} \phi_{\ell} \cdot i(k-\ell) \partial_{y}\omega_{k-\ell}\right) \rangle   \,{dk} \,{d\ell} \right| \\
	    	&\lesssim \left\| e^{c \lambda_k t} \nu^{\frac{1}{3}} |k|^{\frac{2}{3}} \langle k \rangle^{m} \langle{k}^{-1} \rangle^{\varepsilon}  \|\omega_{k} \|_{L^2_y} \right\|_{L^2_{\{|k|\ge \nu\}}} \left\| |k|^{-\frac{1}{2}} \|\partial_{y} \omega_{k} \|_{L^2_y}\right\|_{L^1_{k}} \\
&\qquad\times
\left\| e^{c \lambda_k t} \langle \ell \rangle^{m} \langle{\ell}^{-1} \rangle^{\varepsilon} |\ell|^{\frac{1}{2}} \|\partial_{y}  \phi_{\ell} \|_{L^\infty_y} \right\|_{L^2_\ell}\\
	    	&\lesssim \nu^{-\frac{1}{2}} \mathscr{E}^{\frac{1}{2}} \mathscr{D}_1^{\frac{3}{4}} \mathscr{D}_3^{\frac{1}{4}}.
	    \end{align*}
	    For the third case, where $ |\ell|^{-\frac{1}{6}} \lesssim \langle{\ell}^{-1} \rangle^{\varepsilon} \langle\frac{1}{k-\ell} \rangle^{\varepsilon} \langle{k}^{-1} \rangle^{\frac{1}{6} - 2\varepsilon}$, applying Lemma \ref{pre1} yields
	    \begin{align*}
	    \mathcal{N}^{y,2}_{3,1} &= \left| c_{\beta} \int_{\R^2}\mathbf{1}_{ 2\nu\le2|k| \le |k-\ell|  }  e^{2c \lambda_k t} \nu^{\frac{1}{3}} |k|^{-\frac{4}{3}} \langle k \rangle^{2m} \langle{k}^{-1} \rangle^{2\varepsilon}   \operatorname{Re} \langle ik   \omega_k,   \left( \partial_{y} \phi_{\ell} \cdot i(k-\ell) \partial_{y}\omega_{k-\ell}\right) \rangle   \,{dk} \,{d\ell} \right| \\
	    &\lesssim \left\| \langle{k}^{-1} \rangle^{\frac{1}{6}} |k|^{-\frac{1}{3}} \| \omega_{k} \|_{L^2_y} \right\|_{L^1_k} \nu^{\frac{1}{6}} \left\| \langle \ell \rangle^{m}  \langle{\ell}^{-1} \rangle^{\varepsilon} |\ell|^{\frac{5}{6}} \| \partial_{y} \phi_{\ell} \|_{L^\infty_y} \right\|_{L^2_\ell} \nn\\
&\qquad\times\nu^{\frac{1}{6}} \left\| \langle k \rangle^{m}  \langle{k}^{-1} \rangle^{\varepsilon} |k|^{\frac{1}{3}} \| \partial_{y} \omega_{k} \|_{L^2_y} \right\|_{L^2_{\{|k| \ge \nu\}}}  \\
	    &\lesssim \nu^{-\frac{1}{3}} \mathscr{E}^{\frac{1}{2}} \mathscr{D}_1^{\frac{1}{4}} \mathscr{D}_2^{\frac{1}{4}} \mathscr{D}_3^{\frac{1}{2}},
	    \end{align*}
	    the last line utilizes the facts that $ |\ell|^{\frac{5}{6}} \| \partial_{y} \phi_{\ell} \|_{L^\infty_y } \lesssim |\ell|^{\frac{1}{3}} \| \nabla_\ell \partial_{y} \phi_{\ell} \|_{L^2_y} \lesssim |\ell|^{\frac{1}{3}} \| \omega_{\ell} \|_{L^2_y} $ and
$$ \nu^{\frac{1}{6}} \left\| \langle k \rangle^{m}  \langle{k}^{-1} \rangle^{\varepsilon} |k|^{\frac{1}{3}} \| \partial_{y} \omega_{k} \|_{L^2_y} \right\|_{L^2_{\{|k| \ge \nu\}}} \lesssim \nu^{-\frac{1}{3}} \mathscr{D}_1^{\frac{1}{4}} \mathscr{D}_2^{\frac{1}{4}}. $$
	    Therefore, combining all the estimates for $ \mathcal{N}^{y,1}_{3,1}$ and $ \mathcal{N}^{y,2}_{3,1}$ across the various regions, we conclude that
	    \begin{align}\label{N31y}
	    	\mathcal{N}^{y}_{3,1} \lesssim \nu^{-\frac{1}{2}} \mathscr{E}^{\frac{1}{2}} \big( \mathscr{D}_1^{\frac{3}{8}} \mathscr{D}_3^{\frac{5}{8}} +   \mathscr{D}_1^{\frac{1}{2}} \mathscr{D}_3^{\frac{1}{2}} +  \mathscr{D}_1^{\frac{3}{4}} \mathscr{D}_3^{\frac{1}{4}} +  \mathscr{D}_1^{\frac{1}{8}} \mathscr{D}_2^{\frac{1}{4}} \mathscr{D}_3^{\frac{3}{8}} \mathscr{D}_5^{\frac{1}{4}} +  \mathscr{D}_1^{\frac{1}{4}} \mathscr{D}_2^{\frac{1}{4}} \mathscr{D}_3^{\frac{1}{2}}\big).
	    \end{align}
	    This completes the proof of the lemma by combining \eqref{N31x} and \eqref{N31y}.
	\end{proof}
\end{lemma}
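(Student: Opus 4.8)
The plan is to estimate $\mathcal{N}_3$ by reducing to a single representative integral and then partitioning the $x$-frequency plane into paraproduct regimes. First I would observe that the two brackets defining $\mathcal{N}_3$, namely $\operatorname{Re}\langle ik\omega_k,\partial_y\mathcal{N}_{\omega,k}\rangle$ and $\operatorname{Re}\langle ik\mathcal{N}_{\omega,k},\partial_y\omega_k\rangle$, share the same structure: since $\omega_k$ vanishes at $y=\pm1$, one integration by parts in $y$ converts the second into a combination of terms of the first type, so it suffices to bound $\mathcal{N}_{3,1}$. I would then expand the nonlinearity in the convolution form $\mathcal{N}_{\omega,k}=-\int_\R\big(\partial_y\phi_\ell\,i(k-\ell)\omega_{k-\ell}-i\ell\phi_\ell\,\partial_y\omega_{k-\ell}\big)\,d\ell$ and split $\mathcal{N}_{3,1}=\mathcal{N}^x_{3,1}+\mathcal{N}^y_{3,1}$ along these two pieces. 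For $\mathcal{N}^x_{3,1}$ I would move the $\partial_y$ off the nonlinearity onto $\omega_k$, exposing the benign factor $ik\,\partial_y\omega_k$; for $\mathcal{N}^y_{3,1}$ the surviving $\partial_y$ must land either on $\partial_y\phi_\ell$, producing $\partial^2_y\phi_\ell$ (call this $\mathcal{N}^{y,1}_{3,1}$), or on $\omega_{k-\ell}$ (call this $\mathcal{N}^{y,2}_{3,1}$).

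The engine of the argument is, in each piece, a decomposition of $\R^2_{k,\ell}$ into the three paraproduct cells $\tfrac12|k-\ell|\le|k|\le 2|k-\ell|$, $\,2|k-\ell|\le|k|$ (where $|k|\approx|\ell|$), and $2|k|\le|k-\ell|$ (where $|\ell|\approx|k-\ell|$), each refined according to whether the relevant frequencies exceed the threshold $\nu$, as forced by $\beta(k)=\nu^{1/3}|k|^{-4/3}\mathbf{1}_{|k|\ge\nu}$. Within a given cell I would split the weight via $\langle k\rangle^{2m}\lesssim\langle\ell\rangle^m\langle k-\ell\rangle^m$, apportion the prefactor $\beta(k)$ among the two vorticity factors using elementary inequalities valid for $|k|\ge\nu$ such as $\nu^{1/3}|k|^{1/6}\lesssim\nu^{1/6}|k|^{1/3}$, and transfer the singular low-frequency weight across the convolution through $|\ell|^{-1/6}\lesssim\langle\ell^{-1}\rangle^\varepsilon\langle(k-\ell)^{-1}\rangle^\varepsilon\langle k^{-1}\rangle^{1/6-2\varepsilon}$, which is where $\varepsilon\in(0,\tfrac1{12})$ enters. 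After this bookkeeping each cell reduces, by H\"older's inequality in $k$ and $\ell$ together with the $L^2\to L^2$ boundedness of $\mathfrak{J}_k$ (Lemma~\ref{le2.1}), to a product of exactly the $L^2_k$-, $L^1_k$- and $L^\infty_y$-type quantities controlled by Lemmas~\ref{pre1}, \ref{pre2} and \ref{pre3}; reading off the resulting products of $\mathscr{E}^{1/2}$ and $\mathscr{D}_i^{1/2}$ assembles the six terms on the right-hand side of \eqref{N3}.

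The main obstacle is the output-low-frequency regime $2|k|\le|k-\ell|$ of $\mathcal{N}^{y,1}_{3,1}$, in which the target frequency $k$ is pinned near the dissipation threshold $|k|\approx\nu$ while the interacting vorticity and stream function live at the far higher frequency $|k-\ell|\approx|\ell|$. This is precisely the low-frequency interaction that produced the logarithmic loss in \cite{arbon2024}: the combination of the singular weight $\beta(k)$ with the low-frequency factor $\langle k^{-1}\rangle^\varepsilon$ leaves a borderline frequency integral, of the form $\int_{|k|\ge\nu}|k|^{-1}\,dk=\ln(1/\nu)$ at $\varepsilon=0$, which is only logarithmically convergent. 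The remedy built into the present weighted framework is to spend the second $y$-derivative of $\phi$ as dissipation via the third estimate of Lemma~\ref{pre3}, to peel off $\partial_y\omega_{k-\ell}$ as $\mathscr{D}_1,\mathscr{D}_2$ through Lemma~\ref{pre2}, and to absorb the borderline weight into the strictly finite frequency factor $\big\||k|^{-1/3}\langle k\rangle^{-m}\langle k^{-1}\rangle^{1/6-\varepsilon}\big\|_{L^2_k}\lesssim1$. The strict positivity of $\varepsilon$ renders this last integral convergent rather than logarithmic, which is exactly what turns the net prefactor into $\nu^{-1/2}$ — matched, after the global bootstrap, by $\mathscr{E}^{1/2}\lesssim\varepsilon_0\nu^{1/2}$ — and thereby yields the mixed term $\mathscr{D}_1^{1/8}\mathscr{D}_2^{1/4}\mathscr{D}_3^{3/8}\mathscr{D}_5^{1/4}$ without a logarithm. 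The companion subterm $\mathcal{N}^{y,2}_{3,1}$ is milder, falls under the same decomposition, and contributes $\mathscr{D}_1^{1/4}\mathscr{D}_2^{1/4}\mathscr{D}_3^{1/2}$.
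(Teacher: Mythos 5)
Your proposal follows essentially the same route as the paper's proof: the same reduction to $\mathcal{N}_{3,1}$ by integration by parts, the same splitting into $\mathcal{N}^{x}_{3,1}$, $\mathcal{N}^{y,1}_{3,1}$, $\mathcal{N}^{y,2}_{3,1}$, the same paraproduct decomposition refined at the threshold $|k|\sim\nu$, and the same resolution of the critical regime $2|k|\le|k-\ell|$ via the weight transfer $|\ell|^{-1/6}\lesssim\langle\ell^{-1}\rangle^{\varepsilon}\langle(k-\ell)^{-1}\rangle^{\varepsilon}\langle k^{-1}\rangle^{1/6-2\varepsilon}$ together with Lemmas~\ref{pre2} and \ref{pre3}, where $\varepsilon>0$ makes the borderline frequency integral finite. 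The terms you attribute to each regime match those in \eqref{N31x} and \eqref{N31y}, so the plan is correct as written.
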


\subsection{Completion the proof of the nonlinear stability of Theorem \ref{thm1.2}}
In this part of the proof, we make use of the bootstrap argument to finalize the argument of Theorem \ref{thm1.2}. As a first step, we introduce the following energy functional
\begin{align*}
	 \mathscr{E}_{\text{total}}(t) \stackrel{\mathrm{def}}{=} \sup_{0 \le \tau \le  t}\mathscr{E}(\tau) + \int_{0}^{t} \mathscr{D}(\tau) {d} \tau.
\end{align*}
Then, invoking Lemmas \ref{le_1}-\ref{le_3} and combining with \eqref{energy}, we arrive at
\begin{align}
	\mathscr{E}_{ \text{total}}(t) \lesssim \mathscr{E}_{ \text{total}}(0) +  \nu^{-\frac{1}{2}} \mathscr{E}_{ \text{total}}^{\frac{3}{2}}(t). \label{Ew_total}
\end{align}
Furthermore, taking into account the initial condition
\begin{align*}%\label{initial_condition}
\sum_{0\le j\le 1 } \left\| (\nu^{\frac{1}{3} } \partial_{y})^j \langle \partial_x\rangle^{m-\frac{j}{3}} \left\langle \frac{1}{\partial_x} \right\rangle^{\varepsilon} \omega_{in} \right\|_{L^2_{x,y}} \le \varepsilon_0   \nu^{\frac{1}{2} },
\end{align*}
it is enough to select $ \varepsilon_0$ small enough. Under the bootstrap hypothesis, \eqref{Ew_total} then immediately yields
\begin{align*}
	\mathscr{E}_{ \text{total}}(t) \le C \varepsilon_0 \nu.
\end{align*}
And thus the proof of Theorem  \ref{thm1.2} is concluded. \hspace{7.8cm}
$\square$

\bigskip

\vskip .2in
\section*{Acknowledgement}
\noindent{Wu was partially supported by the National Science Foundation of the United States under Grants DMS 2104682 and DMS 2309748. Zhai was partially supported by  the Guangdong Provincial Natural Science Foundation under grant 2024A1515030115. }

 \vskip .2in
\noindent{\bf Data Availability Statement} Data sharing is not applicable to this article as no
data sets were generated or analysed during the current study.

\vskip .2in

\noindent{\bf Conflict of Interest} The authors declare that they have no conflict of interest. The
authors also declare that this manuscript has not been previously published, and
will not be submitted elsewhere before your decision.

\end{document}